\numberwithin{equation}{section}
\numberwithin{figure}{section}
  \theoremstyle{remark}
  \newtheorem*{acknowledgement*}{\protect\acknowledgementname}
\theoremstyle{plain}
\newtheorem{thm}{\protect\theoremname}
  \theoremstyle{definition}
  \newtheorem{defn}[thm]{\protect\definitionname}
  \theoremstyle{remark}
  \newtheorem{rem}[thm]{\protect\remarkname}
  \theoremstyle{plain}
  \newtheorem{prop}[thm]{\protect\propositionname}
  \theoremstyle{plain}
  \newtheorem{lem}[thm]{\protect\lemmaname}
  \theoremstyle{plain}
  \newtheorem{cor}[thm]{\protect\corollaryname}
  \theoremstyle{definition}
  \newtheorem{example}[thm]{\protect\examplename}
  \theoremstyle{remark}
  \newtheorem*{rem*}{\protect\remarkname}
\DeclareMathOperator{\ccten}{\widehat{\boxtimes}}
\DeclareMathOperator{\from}{\leftarrow}
  \providecommand{\acknowledgementname}{Acknowledgement}
  \providecommand{\corollaryname}{Corollary}
  \providecommand{\definitionname}{Definition}
  \providecommand{\examplename}{Example}
  \providecommand{\lemmaname}{Lemma}
  \providecommand{\propositionname}{Proposition}
  \providecommand{\remarkname}{Remark}
\providecommand{\theoremname}{Theorem}
\begin{document}

\title{NONARCHIMEDEAN COALGEBRAS AND COADMISSIBLE MODULES}

\author{Anton Lyubinin}
\begin{abstract}
We show that basic notions of locally analytic representation theory
can be reformulated in the language of topological coalgebras (Hopf
algebras) and comodules. We introduce the notion of admissible comodule
and show that it corresponds to the notion of admissible representation
in the case of compact $p$-adic group.
\end{abstract}

\email{anton@ustc.edu.cn, anton@lyubinin.kiev.ua}

\address{Department of Mathematics, School of Mathematical Sciences, University
of Science and Technology of China, Hefei, Anhui, People's Republic
of China}

\maketitle
\global\long\def\Imm#1{\mathrm{Im}\left(#1\right)}
\global\long\def\Id{\mathrm{Id}}
\global\long\def\Ker#1{\mathrm{Ker}\left(#1\right)}

\global\long\def\Bann{\mathrm{Ban}_{K}^{\infty}}
\global\long\def\Banc{\mathrm{Ban}_{K}^{\leq1}}
\global\long\def\Ban{\mathrm{Ban}_{K}}

\global\long\def\BAlgn{\mathrm{BAlg}_{K}^{\infty}}
\global\long\def\BAlgc{\mathrm{BAlg}_{K}^{\leq1}}
\global\long\def\BAlg{\mathrm{BAlg}_{K}}

\global\long\def\BCAlgn{\mathrm{BCoalg}_{K}^{\infty}}
\global\long\def\BCAlgc{\mathrm{BCoalg}_{K}^{\leq1}}
\global\long\def\BCAlg{\mathrm{BCoalg}_{K}}

\global\long\def\Bmod#1{\mathrm{BMod}_{#1}}
\global\long\def\Bmodc{\mathrm{Bmod}_{K}^{\leq1}}

\global\long\def\rctcomod#1{\mathrm{CTComod}_{#1}}
\global\long\def\lctcomod#1{\mathrm{_{#1}CTComod}}

\global\long\def\rlscomod#1{\mathrm{LSComod}_{#1}}
\global\long\def\llscomod#1{\mathrm{_{#1}LSComod}}

\global\long\def\rdlsmod#1{\mathrm{FSMod}_{#1}}
\global\long\def\ldlsmod#1{\mathrm{_{#1}FSMod}}

\global\long\def\rnfmod#1{\mathrm{NFMod}_{#1}}
\global\long\def\lnfmod#1{\mathrm{_{#1}NFMod}}

\global\long\def\rbcom#1{\mathrm{BComod}_{#1}}
\global\long\def\rbcomn#1{\mathrm{BComod}_{#1}^{\infty}}
\global\long\def\rbcomc#1{\mathrm{BComod}_{#1}^{\leq1}}

\global\long\def\lbcom#1{\!{}_{#1}\mathrm{BComod}}
\global\long\def\lbcomn#1{\!{}_{#1}\mathrm{BComod}^{\infty}}
\global\long\def\lbcomc#1{\!{}_{#1}\mathrm{BComod}^{\leq1}}

\global\long\def\rbmod#1{\mathrm{BMod}_{#1}}
\global\long\def\lbmod#1{\!_{#1}\mathrm{BMod}}

\global\long\def\uball#1{B_{#1}}

\global\long\def\sd#1{\left(#1\right)'_{b}}
\global\long\def\wdual#1{\left(#1\right)'_{s}}
\global\long\def\bd#1{#1'}
\global\long\def\bhd#1{#1^{\odot}}

\global\long\def\hits{\rightharpoonup}
\global\long\def\hit{\leftharpoonup}

\global\long\def\cten{\widehat{\otimes}}
\global\long\def\ten{\otimes}
\global\long\def\ep{\epsilon}
\global\long\def\emb{\hookrightarrow}
\global\long\def\dperp{\upmodels}
\global\long\def\from{\leftarrow}

\global\long\def\cm{\Delta}
\global\long\def\bten{\bar{\otimes}}
 \newcommandx\norm[1][usedefault, addprefix=\global, 1=\cdot]{\left\Vert #1\right\Vert }
\global\long\def\ccten#1{\underset{#1}{\widehat{\boxtimes}}}
\global\long\def\hd#1{#1^{\widehat{0}}}

\global\long\def\id#1{\mathrm{id}_{#1}}
\global\long\def\ann#1{\left(#1\right)^{\perp}}

\global\long\def\ilim{{\displaystyle \lim_{\longrightarrow}\,}}
\global\long\def\plim{{\displaystyle \lim_{\longleftarrow}\,}}

\newcommandx\compl[2][usedefault, addprefix=\global, 1=]{#2^{\wedge#1}}

\tableofcontents{}

\section*{Introduction}

The study of $p$-adic locally analytic representation theory of $p$-adic
groups seems to start in 1980s, with the first examples of such representations
studied in the works of Y. Morita \cite{M1,M2,M3} (and A. Robert,
around the same time), who considered locally analytic principal series
representations for $p$-adic $SL_{2}$. Morita determined when those
representations are irreducible by analyzing them as topological modules
over the corresponding Lie algebra with the action of a finite group.
Among the other work on $p$-adic (non-analytic) representations,
relevant to this paper, we would like to mention the works of B. Diarra,
who was focusing on special classes of representations of compact
groups. Namely, Diarra studied nonarhimedean Banach Hopf algebras
and his major applications were to the classes of representations,
whose Hopf algebra of matrix elements admit an invariant functional.
Diarra also considered Banach Hopf algebra $C\left(\mathbb{Z}_{p},K\right)$
of continuous functions on $p$-adic integers $\mathbb{Z}_{p}$ and
showed that the category of continuous representations of $\mathbb{Z}_{p}$
in $p$-adic Banach spaces is equivalent to the category of Banach
comodules over $C\left(\mathbb{Z}_{p},K\right)$. Diarra also constructed
examples of irreducible infinite-dimensional representations of $\mathbb{Z}_{p}$,
thus showing that even in the case of compact group, the picture over
$\mathbb{Q}_{p}$ is a lot more complicated than the one over $\mathbb{C}$.

The major development of the theory was made in a series of papers
of P. Schneider and J. Teitelbaum\cite{ST1,ST2,ST3,ST4}. Their approach
to continuous or locally analytic representations of a locally analytic
group $G$ is algebraic and, instead of analyzing representation $V$
as topological $K$-vector space, they work with its strong dual space
$\sd V$ as with a module over distribution algebra $D^{la}\left(G,K\right)$
($D\left(G,K\right)$ for continuous representations). They were able
to obtain results similar to Morita's for principal series representations
of $p$-adic $GL_{2}$ and formulated good finiteness conditions on
representations, called admissibility. Admissible representations
rule out pathologies of $p$-adic case, like infinite-dimensional
irreducible representations of $\mathbb{Z}_{p}$, and form a broad
enough category of representations, which include all important examples
and provide a framework for developing a general theory. By definition,
a representation is admissible if its strong dual is a coadmissible
module over $D^{la}\left(G,K\right)$ and the later means it is a
coadmissible module over $D^{la}\left(H,K\right)$ for some (and thus
any) open compact subgroup $H<G$. For a compact subgroup $H<G$ the
algebra $A=D^{la}\left(H,K\right)$ have an additional structure,
called Fréchet-Stein structure, which means it is a locally convex
projective limit $A=\plim A_{n}$ of Noetherian Banach algebras $A_{n}$
with flat transition maps. In \cite{ST3} Schneider and Teitelbaum
prove basic results for general Fréchet-Stein algebras and define
coadmissible modules w.r.t. Fréchet-Stein structure $\left\{ A_{n}\right\} $
as modules that are projective limits $M=\plim M_{n}$ of finitely-generated
$A_{n}$-modules $M_{n}$, such that we also have isomorphisms $M_{n}\cong A_{n}\otimes_{A_{n+1}}M_{n+1}$.
They prove that $D^{la}\left(H,K\right)$ is a Fréchet-Stein algebra
which is additionally is a nuclear space. In \cite{EM} the notion
of coadmissible module was generalized to the case of weak Fréchet-Stein
algebras, which can be thought of as projective limits of Banach algebras.

In this paper we develop a corresponding notion for comodules over
certain topological coalgebras. Namely, we define the category of
compact type (CT) $\cten$-coalgebras and show that this category
is antiequivalent to the category of nuclear Fréchet (NF) algebras
(however, our terminology is a little different from \cite{ST3} and
\cite{EM}). We give a definition of an admissible comodule over a
CT-$\cten$-coalgebra and of a coadmissible module over an NF-$\cten$-algebra
and we show that categories of admissible comodules and coadmissible
modules are antiequivalent. In case of CT-$\cten$-coalgebra $C$
with $\sd C$ being a nuclear Fréchet-Stein algebra, our definition
of coadmissible module over $\sd C$ is equivalent to the one of Schneider
and Teitelbaum. Thus for a compact $p$-adic group $H$ our admissible
comodules over CT-$\cten$-coalgebra $C^{la}\left(H,K\right)$ provide
an explicit description of the category of admissible representations
of $H$, since the category of locally analytic representations of
$H$ in CT-spaces is equivalent to the category of CT-$\cten$-comodules
over $C^{la}\left(H,K\right)$. Hopf algebra and comodule formalism
has proven to be useful in representation theory and provide a natural
language for treatment of certain questions. In particular, it is
the language of quantum groups (for example, see \cite{PW}) and many statements
of the geometric representation theory of algebraic groups are formulated
and proven in quantum case. Besides applications to quantization (see
\cite{S}), which was one of the original motivations for this work, it is
our belief that a similar thing can be done in $p$-adic representation
theory and thus Hopf algebra formalism can be an alternative to geometric
methods. Finally, comodule 
and Hopf algebra formalism provide the most convenient framework for discussing 
Tannaka reconstruction (the main motivation for this work, with nonarchimedean version being the subject of \cite{Lyu2}), which, in its turn, is the key element in the proof of the 
geometric Satake correspondence. This paper, and most importantly the description of admissible
representations of a compact $p$-adic group in terms of comodules,
provides the foundation for further work in all these directions. 

Let us outline the content of this paper.

In the first two sections we develop the basic notions of the theory
of Banach $\cten$-coalgebras (bialgebras, Hopf algebras). Some of
the results there have been worked out in \cite{bd1,bd2,bd3,bd4,BD5}
for Hopf $\cten$-algebras, some are more or less straightforward
generalizations of the corresponding notions and statements of the
algebraic theory of coalgebras and comodules \cite{dnr,sw}. We also
formulate some concepts and prove results, foundational for the theory
of comodules, like Frobenius reciprocity and tensor identity, which
will not be used along the proof of our main result. Some of the traditional
questions of Hopf algebra theory, like the description of rational
modules (\cite[2.1]{sw}, for more general version see \cite{L})
have very simple answers in $p$-adic case. Since our main result
is a ``module-comodule'' duality, we focus our attention on coalgebras
and most of the time are sketchy on bialgebra and Hopf algebra case.

The last three sections are devoted to the topological theory. Although
some basic results share the same proofs in Banach and more general
topological case, we think it is better to consider those two cases
separately for the sake of clarity. Our main objects of study are
a $\cten$-coalgebras $C$, such that $C$ is topologically isomorphic
to a compact locally convex inductive limit of a sequence of Banach
$\cten$-coalgebras $C_{n}$, and a $\cten$-comodules $V$, such
that $V$ is topologically isomorphic to a compact locally convex
inductive limit of a sequence of Banach $C_{n}-\cten$-comodules $V_{n}$.
We call such objects CT-coalgebras and CT-comodules correspondingly.
However, these notions are not sufficient to formulate our notion
of admissibility for CT-comodule, for which one need to consider $\cten$-comodules
of the form $V\ccten CC_{n}$. The cotensor product $V\ccten CC_{n}$
is a closed subspace of the complete tensor product $V\cten C_{n}$.
The problem is that $V\cten C_{n}$ is not a compact type space, but
only an LB-space, (not compact) locally convex inductive limit of
Banach spaces, and $V\ccten CC_{n}$ does not even have to be an LB-space.
This forces us to consider more general topological comodules, which
we call LB- and LS-comodules. Since CT-spaces are also called LS-spaces
in functional analysis, we use this name for topological algebraic
structures, which are compact type only as a space, i.e., LS-comodule
$V$ is a topological comodule, such that $V\cong\ilim V_{n}$ is
a compact locally convex inductive limit of Banach spaces $V_{n}$,
but $V_{n}$ are only spaces, not comodules themselves.

Since our intended audience has various background (Hopf algebras,
quantum groups and $p$-adic representation theory), we moved proofs
of two key technical results from functional analysis into appendix.
We hope it makes this paper a better reading.
\begin{acknowledgement*}
We would like to thank an anonymous referee for pointing out the reference
\cite{BD5} and B. Diarra for sharing this old paper with us. We also
would like to thank W. Schikhof and C. Perez-Garcia for answering
some questions of the author. Finally, we would like to thank Z. Lin
for his remarks on early versions of this paper.
\end{acknowledgement*}

\section{Banach coalgebras}

Throughout this paper $K$ means a discretely valued field with norm
$\left|\cdot\right|_{K}$. $K^{0}:=\left\{ x\in K|\,\left|x\right|_{K}\leq1\right\} $
is its ring of integers, $K^{00}:=\left\{ x\in K|\,\left|x\right|_{K}<1\right\} $
and $\bar{K}:=K^{0}/K^{00}$ is the residue field. The linear dual
space of a linear space $V$ we denote as $V^{*}$. For a linear map
$A:V\to W$ of vector spaces we denote by $A^{*}$ its adjoint map. 

A \emph{Banach space over} $K$ (or $K$-Banach space) is a normed
space $\left(V,\norm_{V}\right)$ such that $V$ is complete w.r.t.
$\norm_{V}$. When no confusion can occur, we will just write $V$.
$K$-Banach spaces form a category $\Ban$ with bounded linear maps
as morphisms. For $V,W\in\Ban$, $\Ban\left(V,W\right)$ is the Banach
space itself and we write $C\left(V,W\right)\subset\Ban\left(V,W\right)$
for the closed subspace of compact maps. $V^{0}:=\left\{ x\in V|\,\norm[x]_{V}\leq1\right\} $
is a closed unit ball in $V$, $V^{00}:=\left\{ x\in V|\,\norm[x]_{V}<1\right\} $
and $\bar{V}:=V^{0}/V^{00}$. $\bar{V}$ is a vector space over $\bar{K}$.

For convenience we always suppose that the norm $\norm_{V}$ is \emph{solid},
i.e. $\norm[V]_{V}\subset\left|K\right|_{K}$. From \cite[2.1.9]{PGSch}
it is known that this assumption is not a restriction.

We will say that a $K$-Banach space $V$ is \emph{free} if it is
of the form $c_{0}\left(E_{V},K\right)=\left\{ \sum_{n=0}^{\infty}a_{n}e_{n}|\, e_{n}\in E_{V},\, a_{n}\in K:\, a_{n}\to0\right\} $.
Over discretely valued fields any Banach space $V$ is free, with
a possible choice of $E_{V}$ being a $\bar{K}$-vector space basis
of $\bar{V}$.

The continuous dual of a $K$-Banach space $V$ will be denoted by
$\bd V$. The adjoint map for a continuous linear map $f$ will also
be denoted by $f'$.

For a subset $U\subset V$ its annihilator is denoted by $U^{\perp}=\left\{ \phi\in\bd V:\,\phi\left(x\right)=0\;\forall x\in U\right\} $.
The completion of a linear (sub)space $U$ w.r.t. a given norm (or
topology $\tau$) will be denoted by $\compl U$ ($\compl[\tau]U$).

For two $K$-Banach spaces $V$ and $W$ one defines a norm $\norm_{V\cten W}$
on the space $V\otimes_{K}W$ as $\norm[u]_{V\cten W}=\inf\left\{ \sup\norm[v_{i}]_{V}\norm[w_{i}]_{W}|\mbox{ }u=\sum v_{i}\otimes w_{i}\right\} $,
where infinum is taken over all decompositions of $u=\sum v_{i}\otimes w_{i}$.
We denote the completion of the space $V\otimes_{K}W$ w.r.t. $\norm_{V\cten W}$
by $V\cten W$ and call it the complete(d) tensor product of $V$
and $W$. It is known that the topology, induced by $\norm_{V\cten W}$
on $V\cten W$ coincide with projective and inductive tensor product
topologies \cite[section 17]{nfa}. $\Ban$ has a symmetric monoidal
category structure w.r.t. $\cten$. We also use the notation $\bar{\ten}$
to denote the composition of the tensor product ($\cten$ or $\ten$)
with the canonical isomorphism $K\cten V\cong V$ ($V\cten K\cong V$),
i.e. we write $a\bar{\ten}b$ when either $a$ or $b$ belongs to
$K$ and for maps $f\bar{\ten}g$ when the range of either $f$ or
$g$ is $K$.

In a few places we use Sweedler's notation, so we briefly recall it.
For comultiplication on a coassociative coalgebra $C$, $\Delta\left(c\right)={\displaystyle \sum_{i}}a_{i}\ten b_{i}$
in order to avoid introducing new letters one writes $\Delta\left(c\right)={\displaystyle \sum_{i}}c_{\left(1\right)i}\ten c_{\left(2\right)i}$.
In Sweedler's notation one does not think of the summation symbol
$i$, so we just write $\Delta\left(c\right)={\displaystyle \sum}c_{\left(1\right)}\ten c_{\left(2\right)}$
or, in sumless notation, simply $\Delta\left(c\right)=c_{\left(1\right)}\ten c_{\left(2\right)}$.
The coassociativity $\left(\cm\ten\id{}\right)\circ\cm=\left(\id{}\ten\cm\right)\circ\cm$
of the comultiplication $\cm$ allows us to write $c_{\left(1\right)}\ten c_{\left(2\right)}\ten c_{\left(3\right)}:=c_{\left(1\right)\left(1\right)}\ten c_{\left(1\right)\left(2\right)}\ten c_{\left(2\right)}=c_{\left(1\right)}\ten c_{\left(2\right)\left(1\right)}\ten c_{\left(2\right)\left(2\right)}$.
One can also use the notation for coactions $\rho_{V}:V\to V\ten C$,
$\rho_{V}\left(v\right)=v_{\left(0\right)}\ten v_{\left(1\right)}$.
In our setting (Banach or locally convex) the sum in Sweedler's notation
is always assumed to be countable and convergent.

In this section we provide a summary of basic facts about Banach coalgebras
(Hopf algebras). Some of them were worked out in \cite{bd1,bd2,bd3,bd4,BD5}
for Hopf algebras.

\subsection{Banach $\cten$-Coalgebras}
\begin{defn}
A (coassociative and counital)\emph{ $K$-Banach $\cten$-coalgebra}
(or simply Banach coalgebra) $C$ is a triple $\left(C,\Delta_{C},\epsilon_{C}\right)$,
where $C$ is a $K$-Banach space, $\Delta_{C}:C\to C\cten C$ is
a \emph{comultiplication} and $\epsilon_{C}:C\to K$ is a \emph{counit},
continuous morphisms, such that $\left(C,\Delta_{C},\epsilon_{C}\right)$
is a coalgebra object in $\Ban$.
\end{defn}
Taking continuous duals give us the map 
\[
\cm_{C}':\bd{\left(C\cten C\right)}\to\bd C.
\]
 Since we have an embedding $\bd C\cten\bd C\emb\bd{\left(C\cten C\right)}$,
we get on $C'$ the structure of a Banach $\cten$-algebra, with the
product given by 
\[
\alpha\star\beta=\cm_{C}'\circ\left(\alpha\cten\beta\right)=\left(\alpha\bten\beta\right)\circ\cm_{C}.
\]
The adjoint map of counit $\ep_{C}':K\to\bd C$ is the unit of this
algebra.
\begin{defn}
$\bd C$ is called \emph{dual Banach }$\cten$-\emph{algebra} for
$C$. The $\left(-\star-\right)$ product is called \emph{convolution}
product.
\end{defn}
\smallskip{}

\begin{rem}
If we take $C$ - finite-dimensional, then $\bd{\left(C\cten C\right)}=\bd C\cten\bd C,$
and thus, in case of Hopf algebra $C$, $\bd C$ also becomes a Hopf
algebra by standard argument.
\begin{defn}
Let $C$ and $B$ be Banach $\cten$-coalgebras. A bounded linear
map $f:C\to B$ is a \emph{(counital) morphism of Banach }$\cten$-\emph{coalgebras}
if 
\[
\cm_{B}\circ f=\left(f\cten f\right)\circ\cm_{C}
\]
 and 
\[
\epsilon_{B}\circ f=\epsilon_{C}
\]

\end{defn}
\end{rem}
The dual map $f':\bd B\to\bd C$ would satisfy 
\[
f'\circ\cm_{B}'=\cm_{C}'\circ\left(f'\cten f'\right)
\]
 and 
\[
f'\circ\epsilon_{B}'=\epsilon_{C}'
\]
i.e. $f'$ is a (unital) homomorphism of the Banach $\cten$-algebras
$\bd B$ and $\bd C.$ 

Denote the category of Banach $\cten$-coalgebras over $K$ by $\BCAlg$
and the category of Banach $\cten$-algebras over $K$ by $\BAlg$.
For $C,B\in\BCAlg$, $\BCAlg\left(C,B\right)$ is a {*}-weakly closed
subset of $\Ban$$\left(C,B\right)$. A similar statement is true
for homomorphisms of Banach $\cten$-algebras.

Again, in finite-dimensional setting, the dual of the $\cten$-algebra
homomorphism is a $\cten$-coalgebra morphism. 

\smallskip{}

\begin{rem}
Since for a $K$-Banach space $C$ we have $C\otimes_{K}C\subset C\cten C$,
Banach $\cten$-algebras form a subcategory of usual associative algebras.
This is no longer true for $\cten$-coalgebras. Neither Banach $\cten$-coalgebras
are a subcategory of coassociative coalgebras, nor coassociative coalgebras
form a subcategory of Banach $\cten$-coalgebras. For the first statement,
the reason is that comultiplication on a Banach $\cten$-coalgebra
$C$ acts from $C$ into the completed tensor product $C\cten C$,
which is larger than the algebraic tensor product $C\otimes_{K}C$.
For the second, the comultiplication on a coassociative coalgebra
may not be continuous.
\end{rem}

\subsection{Constructions in the category of Banach $\cten$-coalgebras}
\begin{defn}
Let $A$ be a subspace of $C$. $A$ is a $\cten$-\emph{subcoalgebra},
if $\cm_{C}\left(A\right)\subseteq A\cten A$. In case $A$ is closed,
we will say that $A$ is a closed (or Banach) $\cten$-subcoalgebra.

We will say that Banach $\cten$-coalgebra $C$ is \emph{simple} if
it does not have a proper nonzero closed $\cten$-subcoalgebra.
\end{defn}
Let $S\subset C$ be a subset in a Banach $\cten$-coalgebra $C$.
Denote by $C_{S}$ the intersection of all closed $\cten$-subcoalgebras
containing $S.$ Although the proof uses some notions that will be
given later, we formulate the following fact.
\begin{prop}
$\forall c\in C$: $C_{c}$ is a Banach space of countable type. Thus
any simple $\cten$-coalgebra is of countable type.\end{prop}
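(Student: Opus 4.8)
The plan is to prove the nonarchimedean, topological analogue of the classical fundamental theorem of coalgebras: in the purely algebraic setting every element lies in a finite-dimensional subcoalgebra, and here ``finite-dimensional'' should relax to ``of countable type.'' Since $C_{c}$ is by definition a closed subspace of \emph{every} closed $\cten$-subcoalgebra containing $c$, and since countable type is inherited by closed subspaces, it suffices to exhibit a single closed $\cten$-subcoalgebra $D$ with $c\in D$ and $D$ of countable type; then $C_{c}\subseteq D$ finishes the argument.

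First I would use freeness: over the discretely valued field $K$ we may fix an orthonormal basis $\left\{ e_{\alpha}\right\} _{\alpha\in E}$ of $C$, so that $C\cong c_{0}\left(E,K\right)$ and, correspondingly, $C\cten C\cong c_{0}\left(E\times E,K\right)$ with orthonormal basis $\left\{ e_{\alpha}\cten e_{\beta}\right\} $. The crucial elementary fact is that every element of a space of the form $c_{0}\left(I,K\right)$ has \emph{countable support}, because for each $n$ only finitely many coordinates can have absolute value $\geq1/n$. In particular $\mathrm{supp}\left(c\right)\subseteq E$ is countable, and for each $\alpha$ the support of $\cm\left(e_{\alpha}\right)\in c_{0}\left(E\times E,K\right)$ determines a countable set of indices in $E$.

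Then I would build $D$ by a saturation process. Set $T_{0}=\mathrm{supp}\left(c\right)$, and inductively let $T_{n+1}$ be the union of $T_{n}$ with all indices $\beta,\gamma\in E$ such that $e_{\beta}\cten e_{\gamma}$ occurs in $\cm\left(e_{\alpha}\right)$ for some $\alpha\in T_{n}$; each $T_{n}$ is countable, hence so is $T_{\infty}=\bigcup_{n}T_{n}$. Put $D=\overline{\mathrm{span}}\left\{ e_{\alpha}:\alpha\in T_{\infty}\right\} \cong c_{0}\left(T_{\infty},K\right)$, which is of countable type and contains $c$. To see that $D$ is a $\cten$-subcoalgebra, note that the coordinate subspace $\overline{\mathrm{span}}\left\{ e_{\beta}\cten e_{\gamma}:\beta,\gamma\in T_{\infty}\right\} $ is exactly the closed subspace $D\cten D$ of $C\cten C$; by construction $\cm\left(e_{\alpha}\right)\in D\cten D$ for every $\alpha\in T_{\infty}$, and since these $e_{\alpha}$ span a dense subspace of $D$, continuity of $\cm$ together with closedness of $D\cten D$ yields $\cm\left(D\right)\subseteq D\cten D$. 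This proves the first assertion. The statement about simple coalgebras then follows at once: if $C$ is simple, pick any $c\neq0$; then $D$ is a nonzero closed $\cten$-subcoalgebra, so by simplicity $D=C$, whence $C$ is of countable type.

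The hard part will be the identification of the coordinate subspace with $D\cten D$ as a closed subspace of $C\cten C$, i.e. the fact that completing the algebraic tensor product introduces no ``cross terms'' outside $T_{\infty}\times T_{\infty}$; this rests on the orthogonality of the basis $\left\{ e_{\alpha}\cten e_{\beta}\right\} $ and on the isometry $C\cten C\cong c_{0}\left(E\times E,K\right)$ for free spaces, which is special to the discretely valued (free) setting. The remaining ingredients, namely countability of supports in $c_{0}$ and stability of countable type under closed subspaces, are standard facts of nonarchimedean functional analysis.
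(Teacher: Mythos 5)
Your proof is correct, but it takes a genuinely different route from the paper. The paper does not write out an argument at all: it appeals to the fundamental theorem of coalgebras, Sweedler~2.2.1, together with Diarra's Banach-space version, i.e.\ the classical device of decomposing $\left(\cm\ten\id{}\right)\circ\cm\left(c\right)=\sum_{i,j}c_{i}\ten x_{ij}\ten d_{j}$ with $\left\{ c_{i}\right\}$, $\left\{ d_{j}\right\}$ suitably independent (orthogonal, in the Banach setting) and showing that the closed span of the middle factors $x_{ij}$ is a closed $\cten$-subcoalgebra of countable type containing $c$. You instead fix a global orthogonal basis of $C\cong c_{0}\left(E,K\right)$, use countability of supports in $c_{0}$, and run a saturation process on basis indices to manufacture a countable-type closed subcoalgebra $D\ni c$; the key identification $D\cten D\cong\overline{\mathrm{span}}\left\{ e_{\beta}\ten e_{\gamma}:\beta,\gamma\in T_{\infty}\right\}$, which you rightly flag as the crux, does hold because $D$ is the closed span of part of an orthogonal basis, so $C$ splits orthogonally as $D\oplus D^{c}$ and $C\cten C\cong c_{0}\left(E\times E,K\right)$ decomposes coordinatewise. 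Your logical reduction is also sound: $C_{c}$ is closed and sits inside $D$, countable type passes to (closed) subspaces, and for the simplicity claim you correctly use $D$ itself rather than $C_{c}$, thereby avoiding the question of whether an intersection of closed subcoalgebras is a subcoalgebra. What the two approaches buy: yours is self-contained and elementary, but leans on the standing hypothesis that $K$ is discretely valued (so that all Banach spaces are free and admit orthogonal bases) and on a choice of basis; the Sweedler--Diarra route is basis-free, more canonical (the subcoalgebra it produces is generated by the ``matrix coefficients'' of $c$), and is the argument that generalizes beyond the free setting.
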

\begin{proof}
Similar to \cite[2.2.1]{sw}, see also \cite[I.2.1.ii]{bd2}.
\end{proof}
Let $\left(C,\norm[\cdot]_{C}\right)$ be a normed vector space over
$K$. Then the norm $\norm[\cdot]_{C}$ induces the norm $\norm[\cdot]_{C\cten C}$
on $C\otimes C$. Let $C$ be a coassociative counital coalgebra over
$K$, such that the coaction $\cm_{C}$ and counit $\epsilon_{C}$
are continuous with respect to the norm $\norm[\cdot]_{C}$ on $C$
and corresponding norm $\norm[\cdot]_{C\cten C}$ on $C\otimes C$.
Then for the completion $\compl C$ of $C$, from the universal property
of completion and continuity of $\cm_{C},$ one can extend the coaction
and counit and thus get a $K$-Banach coalgebra $\left(\compl C,\cm_{\compl C},\epsilon_{\compl C}\right)$.

\subsubsection{\label{sub:Dual-coalgebra}Dual coalgebra}

We review the notion of the dual Banach coalgebra from \cite{BD5}.

Let $A$ be a unital Banach algebra, $m_{A}:A\cten A\to A$ is the
multiplication and $\norm[m_{A}]\leq1$. Let $\bd A:=\sd A$ be the
dual Banach space and $m_{A}':\bd A\to\bd{\left(A\cten A\right)}$
be the dual map, which is necessarily isometric. We have an isometric
embedding $\bd A\cten\bd A\emb\bd{\left(A\cten A\right)}$ \cite[4.34]{vanr}.

Let $\bhd A:=\left(m_{A}'\right)^{-1}\left(\bd A\cten\bd A\right)$.
Recall that $\bd A$ is an $A$-bimodule with the left action $\left(a\hits a'\right)\left(x\right)=a'\left(xa\right)$
and the right action $\left(a'\hit a\right)\left(x\right)=a'\left(ax\right)$
for $a'\in\bd A$, $a\in A$ ($\gamma_{a'}\left(a\right)$ for $a\hits a'$
and $\delta_{a'}\left(a\right)$ for $a'\hit a$ in \cite{BD5}).
For a fixed $a'$ these define a continuous linear maps $\hits a',a'\hit:A\to\bd A$
($\gamma_{a'}$ for $\hits a'$ and $\delta_{a'}$ for $a'\hit$ in
\cite{BD5}), $\hits a'\left(a\right)=a\hits a'$ and $a'\hit\left(a\right)=a'\hit a$,
with $\norm[\hit a']=\norm[\hits a']=\norm[a']_{\bd A}$. 
\begin{prop}
\label{propBHD}Let $a'\in\bhd A$. For $u:A\to B$ a homomorphism
of unital Banach algebras, denote by $\bhd u$ the restriction $u'|_{\bhd B}$.
\begin{enumerate}
\item $\hits a',\, a'\hit:A\to\bd A$ are completely continuous \emph{(}compact\emph{)}
operators \cite[lemma 1]{BD5}.
\item $a\hits a'\in\bhd A$ and $a'\hit a\in\bhd A$ for all $a\in A$,
and thus $\hits a',\, a'\hit:A\to\bhd A$ \cite[lemma 2]{BD5}.
\item $m_{A}'\left(\bhd A\right)\subset\bhd A\cten\bhd A$.\cite[Theorem 1]{BD5}.
\item If $u:A\to B$ is a homomorphism of unital Banach algebras. Then $\bhd u\left(\bhd B\right)\subset\bhd A$
\cite[lemma 3]{BD5}. Thus we have a functor $\bhd{\left(-\right)}$.
\item For two unital Banach algebras $A$ and $B$ we have $\bhd{\left(A\cten B\right)}=\bhd A\cten\bhd B$
\cite[Theorem 2]{BD5}.
\item For a Banach algebra $\left(A,m_{A},u_{A}\right)$ its dual $\left(\bhd A,\bhd{m_{A}},\bhd{u_{A}}\right)$
is a Banach coalgebra \cite[Theorem 3]{BD5}. 
\end{enumerate}
\end{prop}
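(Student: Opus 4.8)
The plan is to route everything through the nonarchimedean dictionary between the completed tensor product $\bd A\cten\bd A$ and compact operators. Over a discretely valued field every Banach space is free, and the contraction map sending $b'\cten c'\in\bd A\cten\bd A$ to the operator $a\mapsto c'(a)b'$ sends elements to compact operators (and in fact identifies $\bd A\cten\bd A$ isometrically with the space $C(A,\bd A)$). Unwinding $\cm$: if $m_A'(a')=\sum_i b_i'\cten c_i'$, i.e. $a'(xy)=\sum_i b_i'(x)c_i'(y)$, then $\sum_i c_i'(a)b_i'$ evaluated at $x$ equals $a'(xa)=\left(a\hits a'\right)(x)$, so the element $m_A'(a')$ corresponds under the dictionary precisely to the operator $\hits a'$. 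Hence $a'\in\bhd A$ forces $\hits a'$ to be compact, and applying the flip of tensor factors gives the same for $a'\hit$; this proves (1).

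For (2), I would compute the comultiplication of the translates directly. From $a'(xy)=\sum_i b_i'(x)c_i'(y)$ one gets $\left(a\hits a'\right)(xy)=a'(xya)=\sum_i b_i'(x)\left(a\hits c_i'\right)(y)$, so $m_A'\left(a\hits a'\right)=\sum_i b_i'\cten\left(a\hits c_i'\right)$; since $\norm[a\hits c_i']\le\norm[a]\norm[c_i']$ and $\norm[b_i']\norm[c_i']\to 0$, the sum converges in $\bd A\cten\bd A$, so $a\hits a'\in\bhd A$. The identical estimate with the right action handles $a'\hit a$.

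The crux is (3), and this is where I expect the main obstacle. I first note that $\bhd A=(m_A')^{-1}(\bd A\cten\bd A)$ is closed in $\bd A$, because $m_A'$ is an isometry and $\bd A\cten\bd A$ is isometrically embedded, hence closed, in $\bd{(A\cten A)}$. To show $m_A'(a')\in\bhd A\cten\bhd A$ I would exploit the nonarchimedean structure of compact operators: over a discretely valued field the operator attached to $m_A'(a')$ admits a diagonal (Schmidt-type) representation $m_A'(a')=\sum_i\lambda_i\,b_i'\cten c_i'$ with $\{b_i'\}$ and $\{c_i'\}$ orthonormal and $\lambda_i\to 0$. Choosing $x_i,y_i\in A$ dual to these orthonormal systems, evaluation gives $\lambda_i c_i'=a'\hit x_i$ and $\lambda_i b_i'=y_i\hits a'$, both of which lie in $\bhd A$ by (2); discarding the terms with $\lambda_i=0$ shows every surviving $b_i',c_i'\in\bhd A$, and since $\norm[\lambda_i b_i']\norm[c_i']=\left|\lambda_i\right|\to 0$ the sum converges in $\bhd A\cten\bhd A$. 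Equivalently, one extracts $c_i'\in\bhd A$ taking $\{b_i'\}$ orthonormal, extracts $b_i'\in\bhd A$ taking $\{c_i'\}$ orthonormal, and then invokes the identity $(\bd A\cten\bhd A)\cap(\bhd A\cten\bd A)=\bhd A\cten\bhd A$, which is immediate from orthonormal bases. The diagonalization of nonarchimedean compact operators is the genuine analytic input here, and is exactly the kind of fact I would isolate or cite separately.

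The remaining parts are formal once (1)--(3) are in place. For (4), dualizing the homomorphism identity $u\circ m_A=m_B\circ(u\cten u)$ gives $m_A'\circ u'=(u'\cten u')\circ m_B'$, so for $b'\in\bhd B$ we have $m_A'(u'(b'))=(u'\cten u')(m_B'(b'))\in\bd A\cten\bd A$, whence $u'(b')\in\bhd A$ and $\bhd{(-)}$ is a functor. For (5), I would use the isometric identification $\bd{(A\cten B)}=\bd A\cten\bd B$ together with the fact that the multiplication of $A\cten B$ is $m_A\cten m_B$ up to the middle flip; matching the defining preimage through this identification and applying (3) factorwise yields $\bhd{(A\cten B)}=\bhd A\cten\bhd B$. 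Finally, for (6), I would take $\bhd{m_A}=m_A'|_{\bhd A}$ as comultiplication (valued in $\bhd A\cten\bhd A$ by (3)) and $\bhd{u_A}=u_A'|_{\bhd A}$ as counit; coassociativity and the counit axioms then follow by dualizing the associativity and unit axioms of $A$, with boundedness free since $m_A'$ is isometric.
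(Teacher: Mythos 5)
You should know at the outset that the paper never proves this proposition at all: every item is quoted from Diarra \cite{BD5}, so your attempt has to be judged against what a correct argument must contain. Your items (1), (2), (4) and (6) are correct, and (2) is essentially the argument the paper itself gives later for the module analogue. But (5) contains a genuine error: the ``isometric identification $\bd{\left(A\cten B\right)}=\bd A\cten\bd B$'' you invoke is false for infinite-dimensional Banach spaces. There is only a (generally proper) isometric embedding $\bd A\cten\bd B\emb\bd{\left(A\cten B\right)}$ --- the paper is explicit about this, and the properness of this embedding is the entire reason $\bhd A$ is a nontrivial object. Concretely, for $A=B=c_{0}$ the subspace $\bd A\cten\bd B$ consists of the compact operators $A\to\bd B$, whereas $\bd{\left(A\cten B\right)}$ contains the bilinear form of the non-compact inclusion $c_{0}\emb\ell^{\infty}$. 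So your proof of the inclusion $\bhd{\left(A\cten B\right)}\subset\bhd A\cten\bhd B$ collapses at its first step. The standard repair uses unitality: for $f\in\bhd{\left(A\cten B\right)}$, writing $f\left(x\ten y\right)=f\left(\left(x\ten1\right)\left(1\ten y\right)\right)=\sum_{i}f_{i}\left(x\ten1\right)g_{i}\left(1\ten y\right)$ exhibits $f$ as $\sum_{i}\phi_{i}\cten\psi_{i}$ with $\phi_{i}\in\bd A$, $\psi_{i}\in\bd B$, $\norm[\phi_{i}]\norm[\psi_{i}]\to0$, hence $f\in\bd A\cten\bd B$; only then can one push the factors into $\bhd A$ and $\bhd B$.

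In (3) there are two unjustified steps, not one. The two-sided Schmidt decomposition (both families orthonormal simultaneously, $\lambda_{i}\to0$) is a substantive claim --- elementary divisor theory for nonarchimedean compact operators --- which you flag but neither prove nor cite precisely. More seriously, the next step, choosing $x_{i},y_{i}\in A$ \emph{exactly} biorthogonal to infinite orthonormal systems in $\bd A$, is not merely unproven: it can fail. In $\bd{c_{0}}=\ell^{\infty}$ the system $b_{1}'=e_{1}'$, $b_{i}'=e_{i}'+e_{1}'$ for $i\geq2$ is orthonormal, yet no $x\in c_{0}$ satisfies $b_{1}'\left(x\right)=1$ and $b_{i}'\left(x\right)=0$ for all $i\geq2$, since these conditions force all coordinates $x_{i}=-1$ for $i\geq2$. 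Both defects are repairable, and the repair also makes your second (``intersection'') route rigorous without any diagonalization: write $m_{A}'\left(a'\right)=\sum_{i}b_{i}'\cten c_{i}'$ with only $\left\{ b_{i}'\right\} $ orthonormal (one-sided orthonormalization is standard); for each \emph{finite} subsystem, exact biorthogonal elements of $A$ of norm $\leq1$ do exist (orthonormality, the open mapping theorem and discreteness of the valuation show the joint evaluation map sends the unit ball of $A$ onto $\left(K^{0}\right)^{N}$); then (2), $\norm[c_{i}']\to0$ and the closedness of $\bhd A$ give $c_{i}'\in\bhd A$, so $m_{A}'\left(a'\right)\in\bd A\cten\bhd A$, and symmetrically $m_{A}'\left(a'\right)\in\bhd A\cten\bd A$; your intersection identity (valid because closed subspaces over a discretely valued field are orthocomplemented) finishes. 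Equivalently and most cleanly: by (1) and (2) the operators $\hits a'$ and $a'\hit$ are compact with image in the closed subspace $\bhd A$, hence lie in $C\left(A,\bhd A\right)=\bd A\cten\bhd A$, which is precisely the statement $m_{A}'\left(a'\right)\in\left(\bhd A\cten\bd A\right)\cap\left(\bd A\cten\bhd A\right)=\bhd A\cten\bhd A$; this is presumably Diarra's own route from his Lemmas 1 and 2 to his Theorem 1.
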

\begin{rem}
For a Banach Hopf algebra $\left(H,m_{H},u_{H},\cm_{H},\ep_{H}\right)$
(see section \ref{sub:Banach--bialgebras-and}) one get's a dual Banach
Hopf algebra $\left(\bhd H,\bhd{\cm_{H}},\bhd{\ep_{H}},\bhd{m_{H}},\bhd{u_{H}}\right)$
\cite[Theorem 4]{BD5}.\end{rem}
\begin{defn}
We call $\bhd A$ the dual Banach coalgebra of $A$.
\end{defn}
If $A=\bd C$ for some Banach coalgebra $C$, then, since $K$ is
discretely valued field, $C$ is pseudo-reflexive Banach space and
thus $C\emb\bd{\bd C}$ is a strict injection. Similar to \cite[1.5.12]{dnr},
one can show that $C\emb\bhd{\bd C}$ is an embedding of Banach coalgebras.

Clearly, if $u:A\to B$ is a homomorphism of unital Banach algebras
then $\bhd u$ is a Banach coalgebra morphism (or see \cite[1.5.4]{dnr}).
Thus we have a functor $\bhd{\left(-\right)}:\BAlg\to\BCAlg$ \cite[1.5.5]{dnr}.
Similar to \cite[1.5.22]{dnr}, one can show that $\bhd{\left(-\right)}$
is the left adjoint functor to the dual functor $\bd{\left(-\right)}:\BCAlg\to\BAlg$.

One can also define an analog of Sweedler\textquoteright{}s finite
dual of an algebra. Recall \cite[sec. 1.5]{dnr} that for any $K$-algebra
$A$ one has a notion of \emph{finite dual} $A^{0}$. It is a set
of linear functionals $f\in Hom_{K}\left(A,K\right)$, such that $\Ker f$
contains an ideal of finite codimension. $A^{0}$ is a coalgebra and
if $A$ is a bialgebra (Hopf algebra) then $A^{0}$ is also a bialgebra
(Hopf algebra). 

In Banach case one can consider the set $A^{0*}=A^{0}\cap\bd A$.
$A^{0*}$ can be described as a set of continuous functionals, who's
kernel contains a closed ideal of finite codimension. Also denote
by $A^{\widehat{0}}$ the topological closure of $A^{0*}$ in $\bd A.$
\begin{prop}
\label{prop:Properties-of-cfin}Properties of $A^{0}$ imply the following
properties of $A^{\widehat{0}}$:
\begin{enumerate}
\item If $f:A\to B$ is a continuous morphism of Banach $\cten$-algebras
and $I$ is a closed ideal of finite codimension in $B$, then $f^{-1}\left(I\right)$
is a closed finite-codimensional $\cten$-ideal \cite[1.5.1]{dnr};
\item $f^{*}\left(B^{\widehat{0}}\right)\subset A^{\widehat{0}}$\cite[1.5.2.i]{dnr};
\item if $\phi:\bd A\cten\bd B\to\bd{\left(A\cten B\right)}$ is a canonical
(isometric) embedding, then we have $\phi\left(A^{0*}\otimes B^{0*}\right)=\left(A\cten B\right)^{0*}$
and $\phi\left(\hd A\cten\hd B\right)=\hd{\left(A\cten B\right)}$
\cite[1.5.2.ii]{dnr};
\item if $m:A\cten A\to A$ is a multiplication on $A$, then $m^{*}\left(A^{0*}\right)\subset A^{0*}\otimes A^{0*}$
and $m^{*}\left(A^{\widehat{0}}\right)\subset A^{\widehat{0}}\cten A^{\widehat{0}}$
(same as in \cite[1.5.2.iii]{dnr});
\item $\left(A^{0*},m^{*}\circ\phi^{-1},\epsilon\right)$ is a coalgebra
and $\left(A^{\widehat{0}},m^{*}\circ\phi^{-1},\epsilon\right)$ is
a Banach $\cten$-coalgebra, with $\epsilon\left(a^{*}\right)=a^{*}\left(1\right)$
(same as in \cite[1.5.3]{dnr});
\item \label{enu:hdmorph}Let $f:A\to B$ be a Banach $\cten$-algebra morphism.
Then $f^{0*}=f^{*}|_{B^{0*}}:B^{0*}\to A^{0*}$ is a coassociative
coalgebra morphism and $f^{\widehat{0}}=f^{*}|_{B^{\widehat{0}}}:B^{\widehat{0}}\to A^{\widehat{0}}$
is a Banach $\cten$-coalgebra morphism (\cite[1.5.4]{dnr}).
\end{enumerate}
\end{prop}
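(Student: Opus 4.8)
The plan is to treat this proposition as a topological refinement of the purely algebraic finite-dual theory of \cite[sec.~1.5]{dnr}, organized in two layers. The first layer establishes each assertion at the level of the algebraic object $A^{0*}=A^{0}\cap\bd A$, where the corresponding statement for the abstract finite dual $A^{0}$ is already supplied by \cite{dnr}; the only extra content at this layer is that the relevant maps and subspaces respect continuity (boundedness). The second layer passes from $A^{0*}$ to its closure $\hd A=\overline{A^{0*}}$ (closure taken in $\bd A$) by invoking the boundedness, hence continuity, of the adjoint maps involved: a continuous linear map sends the closure of a set into the closure of its image, and for the completed tensor product the completion of $A^{0*}\ten B^{0*}$ inside $\bd A\cten\bd B$ is exactly $\hd A\cten\hd B$, since over a discretely valued field closed subspaces and completed tensor products are orthogonally well behaved.

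For (1) I would note that $f^{-1}(I)$ is a two-sided $\cten$-ideal because $f$ is an algebra homomorphism, it is closed because $f$ is continuous and $I$ is closed, and it is finite-codimensional because $f$ induces an injection $A/f^{-1}(I)\emb B/I$, so $\dim_{K}A/f^{-1}(I)\leq\dim_{K}B/I<\infty$. Part (2) then follows in two steps: first, if $g\in B^{0*}$ has $\Ker g\supset I$ for some closed finite-codimensional ideal $I\subset B$, then $\Ker{f^{*}g}=\Ker{g\circ f}\supset f^{-1}(I)$, which by (1) is a closed finite-codimensional ideal, whence $f^{*}g\in A^{0*}$ and $f^{*}(B^{0*})\subset A^{0*}$; second, since $f^{*}$ is bounded it is continuous, so $f^{*}(\hd B)=f^{*}(\overline{B^{0*}})\subset\overline{f^{*}(B^{0*})}\subset\overline{A^{0*}}=\hd A$.

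Parts (3) and (4) are the analytic heart of the statement. For the algebraic identities $\phi(A^{0*}\ten B^{0*})=(A\cten B)^{0*}$ and $m^{*}(A^{0*})\subset A^{0*}\ten A^{0*}$ I would intersect the abstract statements \cite[1.5.2.ii,~1.5.2.iii]{dnr} with the continuous duals, checking that the isometric embedding $\phi$ identifies the finite-codimensional data correctly and that continuity is preserved; the nontrivial input is that a closed finite-codimensional ideal of $A\cten B$ pulls back through the structure maps to closed finite-codimensional ideals of $A$ and $B$, which is again an instance of (1). To upgrade these to the completed statements $\phi(\hd A\cten\hd B)=\hd{\left(A\cten B\right)}$ and $m^{*}(\hd A)\subset\hd A\cten\hd A$, I would apply $\phi$ (isometric, hence a homeomorphism onto its image) and $m^{*}$ (bounded) to the closures, using that the completed tensor product $\hd A\cten\hd B$ coincides with the closure of $A^{0*}\ten B^{0*}$ inside $\bd A\cten\bd B$.

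Finally, (5) and (6) are formal consequences. Coassociativity of the comultiplication $m^{*}\circ\phi^{-1}$ and the counit axiom for $\epsilon(a^{*})=a^{*}(1)$ are obtained by dualizing the associativity and unit axioms of $(A,m,u)$, exactly as in \cite[1.5.3]{dnr}, the only addition being that by (4) and (3) the comultiplication genuinely takes values in $\hd A\cten\hd A$ and is bounded, so $(\hd A,m^{*}\circ\phi^{-1},\epsilon)$ is a bona fide Banach $\cten$-coalgebra; and (6) follows from (2) together with the compatibility $f^{*}\circ m_{B}^{*}=m_{A}^{*}\circ(f^{*}\cten f^{*})$ obtained by dualizing $m_{B}\circ(f\cten f)=f\circ m_{A}$, i.e.\ $\hd f$ intertwines the comultiplications. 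The main obstacle I anticipate is precisely the interchange of closure with the completed tensor product in (3): one must be sure that $\hd A\cten\hd B$, the completed tensor product of the closures, is literally the closure of $A^{0*}\ten B^{0*}$ in $\bd A\cten\bd B$ and maps under the isometry $\phi$ onto $\hd{\left(A\cten B\right)}$. Everything else reduces either to (1) or to continuity of adjoint maps.
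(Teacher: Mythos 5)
Your proposal is correct and takes essentially the same route as the paper, which states this proposition with no written proof beyond the bracketed citations to \cite[sec.~1.5]{dnr}: namely, prove each claim first at the level of $A^{0*}$ by the algebraic argument of \cite{dnr} plus continuity (with part (1) supplying the closed finite-codimensional ideals, e.g.\ for the pullbacks along $a\mapsto a\ten1$ needed in (3)), and then pass to $\hd{A}$ using continuity of the adjoint maps together with the fact that, over a discretely valued field, $\hd{A}\cten\hd{B}$ embeds isometrically with closed image in $\bd{A}\cten\bd{B}$ as the closure of $A^{0*}\otimes B^{0*}$. One small slip in part (6): the dualized identity should read $m_{A}^{*}\circ f^{*}=\left(f^{*}\cten f^{*}\right)\circ m_{B}^{*}$ when restricted to $B^{\widehat{0}}$, rather than $f^{*}\circ m_{B}^{*}=m_{A}^{*}\circ\left(f^{*}\cten f^{*}\right)$, whose compositions are not even defined; the surrounding argument (dualizing $f\circ m_{A}=m_{B}\circ\left(f\cten f\right)$) is nevertheless exactly right.
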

Thus we have a $A^{0*}$ is a coassociative, counital coalgebra and
$\hd A$ is its completion, which we will call \emph{complete finite
}(\emph{c-finite }for short)\emph{ dual}. For a map of Banach $\cten$-algebras
$f:A\to B$ we define $\hd f:\hd B\to\hd A$ as the restriction of
the dual map $f':\bd B\to\bd A$. Thus we have a contravariant functor
$\left(-\right)^{\widehat{0}}:\BAlg\to\BCAlg$.

One has embeddings $A^{0*}\subset\hd A\subset\bhd A$. If $A=\bd C$
for some Banach coalgebra $C$, then $C\subset\bd C^{0*}$ only if
$C$ is a completion of a coassociative coalgebra. In general one
only has $C\subset\bhd{\bd C}$ and the general relation between $\hd A$
and $\bhd A$ is currently unknown.

We have the following description 
\[
\bhd A=\left\{ f\in\bd A|\mbox{ }\exists f_{i},g_{i}\in\bd A,i\in\mathbb{N}:\mbox{ }f\left(xy\right)=\sum_{i=1}^{\infty}f_{i}\left(x\right)g_{i}\left(y\right)\mbox{ }\forall x,y\in A\right\} .
\]

\subsubsection{$\cten$-coideals}
\begin{defn}
Let $C$ be a Banach $\cten$-coalgebra and $V$ be a closed subspace
of $C$.
\begin{itemize}
\item $V$ is called \emph{right closed }$\cten$-\emph{coideal} if we have
$\cm\left(V\right)\emb V\cten C$;
\item $V$ is called \emph{left closed }$\cten$-\emph{coideal} if we have
$\cm\left(V\right)\emb C\cten V$;
\item $V$ is called a (two-sided) \emph{closed }$\cten$-\emph{coideal}
if we have $\cm\left(V\right)\emb V\cten C+C\cten V$ and $\epsilon\left(V\right)=0$
\end{itemize}
\end{defn}
Completed sums and intersections of (left, right, two-sided) (closed)
$\cten$-coideals is again a (left, right, two-sided) closed $\cten$-coideal.

If $V$is a closed $\cten$-coideal in a Banach $\cten$-coalgebra
$C$, one can consider the projection map $\pi:C\to C/V$ in $\Ban$.
\begin{prop}
\cite[1.4.7]{sw}
\begin{enumerate}
\item $C/V$ is a Banach $\cten$-coalgebra and $\pi$ is a Banach $\cten$-coalgebra
map;
\item if $f:C\to D$ - Banach $\cten$-coalgebra map, then $\Ker f$ is
a closed $\cten$-coideal;
\item if f is surjective, than there is a canonical topological isomorphism
$C/\Ker f\cong D$;
\item universal property of quotient holds
\end{enumerate}
\end{prop}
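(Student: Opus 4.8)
The plan is to obtain all four assertions by descending structure along the Banach-space quotient map $\pi:C\to C/V$, the only genuinely analytic input being a kernel computation for $f\cten f$ that uses the freeness of Banach spaces over $K$. For (1) I would first observe that the continuous map $\pi\cten\pi:C\cten C\to(C/V)\cten(C/V)$ annihilates $V\cten C+C\cten V$ (its restrictions to the two summands are $\pi|_V\cten\pi=0$ and $\pi\cten\pi|_V=0$), hence annihilates $\cm_C(V)$ by the coideal hypothesis; together with $\ep_C(V)=0$ this lets the universal property of the Banach quotient produce unique continuous maps $\cm_{C/V}$ and $\ep_{C/V}$ satisfying $\cm_{C/V}\circ\pi=(\pi\cten\pi)\circ\cm_C$ and $\ep_{C/V}\circ\pi=\ep_C$. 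I would then verify coassociativity and the counit axioms for $(C/V,\cm_{C/V},\ep_{C/V})$ by precomposing each with the surjection $\pi$: using the two displayed relations and functoriality of $\cten$, each identity reduces to the corresponding axiom already valid on $C$, and cancels because $\pi$ is epic. By construction $\pi$ is a Banach $\cten$-coalgebra map.

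Assertion (2) carries the real content. Write $N=\Ker{f}$, closed since $f$ is continuous; counitality gives $\ep_C(v)=\ep_D(f(v))=0$ for $v\in N$, so only $\cm_C(N)\subseteq N\cten C+C\cten N$ remains. From $\cm_D\circ f=(f\cten f)\circ\cm_C$ one gets $(f\cten f)(\cm_C v)=0$, reducing everything to the identity
\[
\Ker{f\cten f}=N\cten C+C\cten N.
\]
Over the discretely valued field $K$ every Banach space is free, so $N$ is orthocomplemented: $C=N\oplus W$ with $f|_W$ injective. Distributing $\cten$ over this finite direct sum writes $C\cten C$ as the direct sum of $N\cten N$, $N\cten W$, $W\cten N$ and $W\cten W$; here $f\cten f$ kills the first three summands and acts as $f|_W\cten f|_W$ on the fourth, while $N\cten C+C\cten N$ is precisely the (already closed) sum of the first three. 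Hence the identity is equivalent to the injectivity of $f|_W\cten f|_W$. The point I would isolate as a lemma is that $\cten$ preserves injections over $K$: for injective $u:X\to Y$ the map $u\cten\id{Z}$ is injective, because under $X\cten c_0(I)\cong c_0(I,X)$ it acts entrywise by $u$, whose zeros are exactly the zero families; factoring $f|_W\cten f|_W=(f|_W\cten\id{})\circ(\id{}\cten f|_W)$ completes the computation. I expect this orthocomplementation-plus-injectivity step to be the main obstacle, as it is exactly where the nonarchimedean hypothesis replaces the flatness argument available over an ordinary field.

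Assertions (3) and (4) are then formal. For (3), (2) makes $N=\Ker{f}$ a closed $\cten$-coideal and (1) makes $C/N$ a Banach $\cten$-coalgebra; the induced continuous bijection $C/N\to D$ is a topological isomorphism by the nonarchimedean open mapping theorem, it is a coalgebra morphism by the same $\pi$-cancellation as in (1), and the inverse of a bijective coalgebra morphism is again one. For (4), given a Banach $\cten$-coalgebra map $g:C\to E$ with $g|_V=0$, the Banach-space universal property yields a unique continuous $\bar g:C/V\to E$ with $\bar g\circ\pi=g$, and cancelling $\pi$ as before shows $\bar g$ commutes with comultiplication and counit; its uniqueness is inherited from the underlying quotient of Banach spaces.
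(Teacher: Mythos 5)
Your proof is correct and is essentially the argument the paper relies on: the paper gives no proof beyond citing the algebraic case \cite[1.4.7]{sw}, and your argument is exactly that proof transported to the Banach setting --- split off a complement $W$ of $\Ker{f}$, note that $f\cten f$ vanishes on the three summands involving $\Ker{f}$, and reduce to injectivity of $f|_{W}\cten f|_{W}$. The two nonarchimedean facts you isolate to justify the transport (closed subspaces of $K$-Banach spaces are complemented, since quotients are free hence projective; and $u\cten\id{Z}$ is injective for injective $u$, via $X\cten c_{0}\left(I,K\right)\cong c_{0}\left(I,X\right)$) are both standard over a discretely valued field, and the first is used by the paper itself in the proof of Proposition \ref{prop:ctenemb}.
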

\smallskip{}

\begin{prop}
Let $f\in\BCAlg\left(C,D\right)$. Then
\begin{enumerate}
\item Preimage of a closed $\cten$-coideal in $D$ is a closed $\cten$-coideal
in $C$;
\item Closure of the image of $f$ is a Banach $\cten$-subcoalgebra of
$D$.
\end{enumerate}
\end{prop}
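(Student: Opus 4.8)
The plan is to prove both statements by dualizing to the category of Banach $\cten$-algebras, where the analogous facts about ideals and subalgebras are either standard or already available from the duality $\bd{\left(-\right)}:\BCAlg\to\BAlg$. Recall that for $f\in\BCAlg\left(C,D\right)$ the dual map $f':\bd D\to\bd C$ is a unital homomorphism of Banach $\cten$-algebras, and that annihilators set up an order-reversing correspondence between closed subspaces of a Banach space and weak-$*$-closed subspaces of its dual. The key bookkeeping observation I would establish first is the dictionary translating the coideal conditions into ideal conditions under annihilation: a closed subspace $V\subseteq C$ is a right closed $\cten$-coideal if and only if its annihilator $\ann V\subseteq\bd C$ is a \emph{left} ideal of the convolution algebra $\bd C$, and symmetrically for left coideals; a two-sided closed $\cten$-coideal corresponds to a two-sided closed ideal of $\bd C$ (the counit condition $\epsilon\left(V\right)=0$ dualizes to the statement that the unit $\ep_C'\left(1\right)$ lies in $\ann V$, which is what makes $\ann V$ a proper two-sided ideal rather than merely a subalgebra-without-unit).

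For part (1), let $W\subseteq D$ be a closed $\cten$-coideal; I would show $f^{-1}\left(W\right)$ is a closed $\cten$-coideal in $C$. Closedness is immediate from continuity of $f$. For the coideal condition I would work directly: from $\cm_D\circ f=\left(f\cten f\right)\circ\cm_C$ one computes, for $v\in f^{-1}\left(W\right)$, that $\left(f\cten f\right)\left(\cm_C\left(v\right)\right)=\cm_D\left(f\left(v\right)\right)$ lands in $W\cten D+D\cten W$; pulling this back through $f\cten f$ and using that the preimage of $W\cten D+D\cten W$ under $f\cten f$ contains $f^{-1}\left(W\right)\cten C+C\cten f^{-1}\left(W\right)$ gives the desired inclusion $\cm_C\left(f^{-1}\left(W\right)\right)\subseteq f^{-1}\left(W\right)\cten C+C\cten f^{-1}\left(W\right)$. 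The counit condition is immediate from $\epsilon_D\circ f=\epsilon_C$. Alternatively, and more cleanly, I would dualize: $\ann{f^{-1}\left(W\right)}$ is the weak-$*$-closure of $f'\left(\ann W\right)$, and since $\ann W$ is a two-sided ideal in $\bd D$ and $f'$ is a unital homomorphism, its image generates a two-sided ideal in $\bd C$, whence $f^{-1}\left(W\right)$ is a coideal by the dictionary above.

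For part (2), I would show that $\overline{\Imm f}$ is a closed $\cten$-subcoalgebra of $D$, i.e.\ that $\cm_D\left(\overline{\Imm f}\right)\subseteq\overline{\Imm f}\cten\overline{\Imm f}$. On the dense subspace $\Imm f$ this is direct: for $f\left(c\right)$ one has $\cm_D\left(f\left(c\right)\right)=\left(f\cten f\right)\left(\cm_C\left(c\right)\right)\in\Imm\left(f\cten f\right)\subseteq\Imm f\cten\Imm f$. Taking closures and using continuity of $\cm_D$ together with the fact that $\overline{\Imm f}\cten\overline{\Imm f}$ is the closure of $\Imm f\otimes\Imm f$ inside $D\cten D$ yields the inclusion on all of $\overline{\Imm f}$.

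The main obstacle I anticipate is the behavior of the completed tensor product under closures, which appears in both parts. One must verify that $\overline{A}\cten\overline{B}$ really is the closure of $A\otimes B$ in $D\cten D$ (so that continuity of $\cm_D$ can be applied), and that the preimage under $f\cten f$ of $W\cten D+D\cten W$ behaves as expected — the subtlety being that $\cten$ is a completion, so $f\cten f$ need not be a quotient map onto its image and the sum $W\cten D+D\cten W$ is a closed subspace whose preimage must be handled with care rather than by naive element-chasing. I expect these points to follow from the monoidal structure on $\Ban$ and the metric properties of $\norm_{V\cten W}$ recorded earlier, but they are exactly where the topological (as opposed to purely algebraic) content of the statement resides.
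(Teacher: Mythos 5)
Your proof of part (2) is correct and is essentially the paper's own argument: $\cm_{D}\left(f\left(c\right)\right)=\left(f\cten f\right)\left(\cm_{C}\left(c\right)\right)$ lands in the closure of $\Imm{f}\otimes\Imm{f}$ inside $D\cten D$, which is $\compl{\Imm{f}}\cten\compl{\Imm{f}}$, and continuity of $\cm_{D}$ finishes. Part (1), however, contains a genuine gap --- in fact two. In your direct argument the key step is a non-sequitur: you show that $\cm_{C}\left(v\right)$ lies in $\left(f\cten f\right)^{-1}\left(W\cten D+D\cten W\right)$ and that this preimage \emph{contains} $f^{-1}\left(W\right)\cten C+C\cten f^{-1}\left(W\right)$; but that containment is the trivial direction and yields nothing, since from $x\in P$ and $P\supseteq Q$ one cannot conclude $x\in Q$. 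What you actually need is the reverse inclusion $\left(f\cten f\right)^{-1}\left(W\cten D+D\cten W\right)\subseteq f^{-1}\left(W\right)\cten C+C\cten f^{-1}\left(W\right)$, and that is essentially the entire content of the statement: even in the purely algebraic case it requires a basis/complement argument for intersections of tensor-product subspaces, and in the completed setting one must additionally control closedness of $W\cten D+D\cten W$. None of this is addressed, and it cannot be repaired by the means you describe.

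Second, the duality dictionary on which your alternative argument rests is backwards. For a closed subspace $V\subseteq C$, the two-sided coideal condition $\cm_{C}\left(V\right)\subseteq V\cten C+C\cten V$, $\epsilon_{C}\left(V\right)=0$ makes $\ann{V}$ a weak-{*}-closed \emph{unital subalgebra} of $\bd C$: for $\alpha,\beta\in\ann{V}$ the product $\alpha\star\beta=\left(\alpha\bten\beta\right)\circ\cm_{C}$ kills $V$ because $\alpha\bten\beta$ vanishes on $V\cten C+C\cten V$ by continuity, and $\epsilon_{C}\in\ann{V}$ is precisely the counit condition. It is \emph{subcoalgebras} of $C$ whose annihilators are two-sided ideals of $\bd C$, not coideals. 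Your own parenthetical remark exposes the confusion: an ideal containing the unit is the whole algebra, so ``the unit lies in $\ann{V}$, which is what makes $\ann{V}$ a proper two-sided ideal'' is self-contradictory. Consequently, showing that $f'\left(\ann{W}\right)$ generates a two-sided ideal would prove the wrong statement even if the weak-{*}-closure bookkeeping were granted. The paper's proof of (1) avoids all tensor-product and duality gymnastics in one line: since $W$ is a closed $\cten$-coideal, $D/W$ is a Banach $\cten$-coalgebra and the projection $\pi:D\to D/W$ is a $\cten$-coalgebra morphism; hence $\pi\circ f$ is a $\cten$-coalgebra morphism and $f^{-1}\left(W\right)=\Ker{\pi\circ f}$ is a closed $\cten$-coideal, because kernels of Banach $\cten$-coalgebra morphisms are closed $\cten$-coideals (both facts are parts of the quotient proposition immediately preceding this one). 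That is the route you should take.
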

\begin{proof}
1. Consider a $\cten$-coideal $I$ in $D$. Then the kernel of the
composition $\pi\circ f:C\to D/I$ is exactly $f^{-1}\left(I\right)$.

2. Since $f$ is a morphism of $\cten$-coalgebras, we have $\cm_{D}\left(f\left(C\right)\right)\subset f\left(C\right)\cten f\left(C\right)$.
Since we have $f\left(C\right)\cten f\left(C\right)=\compl{f\left(C\right)}\cten\compl{f\left(C\right)}$
and since $\cm_{D}$ is continuous, we have $\cm_{D}\left(\compl{f\left(C\right)}\right)\subset\compl{f\left(C\right)}\cten\compl{f\left(C\right)}$.
\end{proof}

\subsection{\label{sub:Banach--bialgebras-and}Banach $\cten$-bialgebras and
Hopf $\cten$-algebras}
\begin{defn}
A Banach $\cten$-bialgebra is a tuple $\left(B,m_{B},u_{B},\cm_{B},\epsilon_{B}\right)$
that is a bialgebra object in $\Ban$.
\end{defn}
A map is a \emph{morphism of Banach }$\cten$-\emph{bialgebras} if
it is a Banach $\cten$-algebra and Banach $\cten$-coalgebra homomorphism. 
\begin{rem}
As in the algebraic case, there is no $\left\{ 0\right\} $ $\cten$-bialgebra!
(since in this case $\epsilon\left(1\right)\neq1$)
\end{rem}
\smallskip{}

\begin{rem}
If $B$ is a finite-dimensional $\cten$-bialgebra., then $\bd B$
is a $\cten$-bialgebra.
\end{rem}
Recall that in general morphism sets in $\BAlg$ and $\BCAlg$ don't
have additional algebraic structure. For any Banach $\cten$-bialgebra
$B$ the space of endomorphisms $\mbox{Ban}_{K}\left(B\right)$ is
an algebra with respect to the convolution product 
\[
\forall\phi,\psi\in\mbox{Ban}_{K}\left(B\right):\mbox{ }\phi\star\psi:=m_{B}\circ\left(\phi\cten\psi\right)\circ\cm_{B}.
\]
It is clear that the convolution is a bounded map, since $\left\Vert \phi\star\psi\right\Vert \leq\norm[m_{B}]\left\Vert \phi\right\Vert \left\Vert \psi\right\Vert \left\Vert \cm_{B}\right\Vert ,$
and thus $\mbox{Ban}_{K}\left(B\right)$ is a $K$-Banach algebra
w.r.t. convolution, with the unit $u_{B}\circ\epsilon_{B}.$
\begin{defn}
The morphism $S$ is called the \emph{antipode} of the Banach $\cten$-bialgebra
$H,$ if $S\star\id H=\id H\star S=u_{H}\circ\epsilon_{H}$.
\end{defn}
The antipode of a bialgebra is unique, if it exists.
\begin{defn}
A Banach $\cten$-bialgebra $H$ with antipode is called Banach Hopf
$\cten$-algebra. It is a Hopf algebra object in $\Ban$.

Any $\cten$-bialgebra map between two Hopf $\cten$-algebras is also
a Hopf $\cten$-algebra map, i.e. commutes with antipodes. 
\end{defn}

\subsection{\label{sub:Constructions-in-the}Constructions in the category of
Banach $\cten$-bialgebras and Hopf $\cten$-algebras.}

\emph{Banach $\cten$-subbialgebra (Hopf }$\cten$-\emph{subalgebra)}
is a closed subspace, which is a Banach $\cten$-bialgebra (Hopf $\cten$-algebra)
with induced operations.

A closed subspace is a \emph{closed $\cten$-biideal (Hopf }$\cten$-\emph{ideal)
}if it is a $\cten$-ideal and $\cten$-coideal (and is invariant
under the antipode map). A kernel of a Banach $\cten$-bialgebra (Hopf
$\cten$-algebra) morphism is a closed $\cten$-biideal (Hopf $\cten$-ideal).
The quotient by a closed $\cten$-biideal (Hopf $\cten$-ideal) carries
the structure of a Banach $\cten$-bialgebra (Hopf $\cten$-algebra).
Preimage of a closed $\cten$-biideal (Hopf $\cten$-ideal) under
a $\cten$-bialgebra ($ $Hopf $\cten$-algebra) morphism is a closed
$\cten$-biideal (Hopf $\cten$-ideal).

For a Banach $\cten$-bialgebra (Hopf $\cten$-algebra) $H$ its dual
Banach $\cten$-coalgebra $\bhd H$ is a Banach $\cten$-bialgebra
(Hopf $\cten$-algebra) (\cite[Theorem 4]{BD5}).

\section{Banach comodules}

\subsection{Basic definitions}
\begin{defn}
Let $C$ be a Banach $\cten$-coalgebra and $V$ is a Banach space.
We say that $V$ is a right (Banach) $\cten$-comodule over $C$ ($V\in\rbcom C$)
if exists $\rho_{V}:V\to V\hat{\otimes}C$ a $K$-linear continuous
map such that 
\[
\begin{array}{c}
\left(id_{V}\bar{\otimes}\epsilon_{C}\right)\circ\rho_{V}=id_{V}\\
\left(\rho_{V}\otimes id_{C}\right)\circ\rho_{V}=\left(id_{V}\otimes\Delta_{C}\right)\circ\rho_{V}
\end{array}.
\]

\end{defn}
Similarly one defines left Banach $C$-$\cten$-comodules. We denote
the corresponding category by $\lbcom C$.
\begin{defn}
Let $V\in\rbcom C$. Then $\bd V$ is a Banach space which is a right
$\bd C-$$\cten$-module. We call $\bd V$ a \emph{dual }$\cten$-\emph{module}
for $V.$ The action is given by convolution
\[
v'\cdot c'=\left(v'\bar{\otimes}c'\right)\circ\rho_{V}
\]
On $V$ there is also a left $\bd C-$$\cten$-module structure
\[
\begin{array}{ccc}
m:\mbox{ }\bd C\cten V\to V\\
\lambda\otimes v\mapsto\lambda\cdot v & = & \left(id_{V}\bar{\otimes}\lambda\right)\circ\rho_{V}\left(v\right).
\end{array}
\]
We call this $\cten$-module structure \emph{induced}.
\end{defn}
In general, if $W$ is a Banach space, one can give a structure of
a continuous right $\bd C-$$\cten$-module to the space $\Ban\left(V,W\right)$,
similar to the dual $\cten$-module.

If $M,N\in\rbcom C$ and $f\in\Ban\left(M,N\right)$, we say it is
a $\cten$-\emph{comodule morphism}, if $\left(f\cten id\right)\circ\rho_{M}=\rho_{N}\circ f.$
The set of Banach $\cten$-comodule morphisms $\rbcom C\left(M,N\right)$
is a closed $K$-linear subspace of $\Ban\left(M,N\right)$.

\subsection{Constructions in the category of Banach $\cten$-comodules}
\begin{defn}
Let $M\in\rbcom C$. We say that a closed subspace $N\subset M$ is
a \emph{Banach }$\cten$-\emph{subcomodule} (or just closed $\cten$-subcomodule)
if $\rho_{M}\left(N\right)\subset N\cten C$. 

We say that $\cten$-comodule $M$ is \emph{simple}, if it does not
have proper nonzero closed $\cten$-subcomodules.\end{defn}
\begin{prop}
Let $C\in\BCAlg$ and $M\in\rbcom C$. Then
\begin{enumerate}
\item a closed subspace N of M is a right closed $C$-$\cten$-subcomodule
iff N is a closed left $C'$-$\cten$-submodule of M;
\item let $x\in M$. Then the closure of $\bd Cx$ in M is a closed $\cten$-subcomodule
of M and is a space of countable type;
\item if M is simple, then it is of countable type.
\end{enumerate}
\end{prop}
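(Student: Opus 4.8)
The plan is to establish (1) first, since (2) and (3) then follow by cyclic-submodule arguments paralleling the algebraic treatment in \cite[2.1--2.2]{sw}, with ``finite dimensional'' replaced by ``countable type''. Throughout I use that $K$ is discretely valued, so every $K$-Banach space is free and every closed subspace is orthocomplemented, and that $\bd C$ separates the points of $C$ by Hahn--Banach (a complete discretely valued field is spherically complete). The forward implication of (1) is immediate: if $\rho_M(N)\subseteq N\cten C$, then for $n\in N$ and $\lambda\in\bd C$ the element $\lambda\cdot n=\left(\id M\bten\lambda\right)\circ\rho_M(n)$ is the image under $\id M\bten\lambda$ of $\rho_M(n)\in N\cten C$, and $\id M\bten\lambda$ carries $N\cten C$ into $N$; hence $\lambda\cdot n\in N$ and $N$ is a closed left $\bd C$-submodule.

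The converse is the heart of the statement and the step I expect to be the main obstacle. Assume $N$ is a closed $\bd C$-submodule. Using orthocomplementedness, write $M=N\oplus P$ with $P$ a closed complement, and let $q\colon M\to P$ be the projection with $\ker q=N$. Then $q\cten\id C$ is the projection of $M\cten C$ onto $P\cten C$ along $N\cten C$, so it suffices to prove $w:=\left(q\cten\id C\right)\circ\rho_M(n)=0$ for every $n\in N$, since then $\rho_M(n)$ coincides with its $N\cten C$-component. For each $\lambda\in\bd C$,
\[
\left(\id P\bten\lambda\right)(w)=q\bigl(\left(\id M\bten\lambda\right)\circ\rho_M(n)\bigr)=q(\lambda\cdot n)=0,
\]
because $\lambda\cdot n\in N=\ker q$. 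Expanding $w\in P\cten C$ through a free basis $E_P$ of $P$ as $w=\sum_{e\in E_P}e\otimes c_e$ with $c_e\in C$ and $c_e\to0$, this identity says $\lambda(c_e)=0$ for all $e$ and all $\lambda\in\bd C$; since $\bd C$ separates points, $c_e=0$, so $w=0$. Hence $\rho_M(n)\in N\cten C$ and $N$ is a closed $\cten$-subcomodule.

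For (2), the set $\bd C\,x$ is the cyclic left $\bd C$-submodule generated by $x$, so its closure $\overline{\bd C\,x}$ is a closed $\bd C$-submodule (each operator $\lambda\cdot(-)$ is continuous), hence a closed $\cten$-subcomodule by (1). To see it is of countable type, expand $\rho_M(x)\in M\cten C$ through a free basis $E_C$ of $C$ as $\rho_M(x)=\sum_{e\in E_C}w_e\otimes e$ with $w_e\in M$ and $w_e\to0$; convergence forces the support $\{e:w_e\neq0\}$ to be countable. Then for every $\lambda\in\bd C$,
\[
\lambda\cdot x=\left(\id M\bten\lambda\right)\circ\rho_M(x)=\sum_{e\in E_C}\lambda(e)\,w_e,
\]
which lies in the closed linear span $W$ of the countably many nonzero $w_e$. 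Thus $\bd C\,x\subseteq W$, and so $\overline{\bd C\,x}\subseteq W$ is of countable type.

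Finally (3) follows at once. Let $M$ be simple and choose $x\neq0$. The counit axiom gives $\ep_C\cdot x=\left(\id M\bten\ep_C\right)\circ\rho_M(x)=x$, and as $\ep_C$ is the unit of $\bd C$ we get $x\in\bd C\,x\subseteq\overline{\bd C\,x}$, a nonzero closed $\cten$-subcomodule. Simplicity forces $\overline{\bd C\,x}=M$, and (2) shows this space is of countable type; hence $M$ is of countable type.
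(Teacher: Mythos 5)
Your proof is correct, and it is worth noting that the paper itself does not give an argument at all: its ``proof'' is the single citation \cite[I.1]{bd2} to Diarra, so your write-up is a genuinely self-contained alternative. All three of your key technical inputs are legitimate under the paper's standing hypothesis that $K$ is discretely valued: (a) every closed subspace of a $K$-Banach space is complemented (split the quotient map using freeness of $M/N$ and discreteness of the value group), which is what makes your projection $q$ and the identification $M\cten C\cong\left(N\cten C\right)\oplus\left(P\cten C\right)$ available; (b) the $c_{0}$-expansions $P\cten C\cong c_{0}\left(E_{P},C\right)$ and $M\cten C\cong c_{0}\left(E_{C},M\right)$, which reduce both the separation step in (1) and the countability step in (2) to coordinate-wise statements; and (c) Hahn--Banach separation, valid since a complete discretely valued field is spherically complete (this is the same fact the paper invokes as pseudo-reflexivity of $C$). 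Your reduction of the hard direction of (1) to showing that the $P\cten C$-component $w$ of $\rho_{M}(n)$ is killed by all $\id{}\bten\lambda$, and hence vanishes, is exactly the right mechanism, and the verification $\left(\id P\bten\lambda\right)\circ\left(q\cten\id C\right)=q\circ\left(\id M\bten\lambda\right)$ is the point that makes it work. Two small things you leave implicit but which do hold: $\bd C\,x$ is actually a submodule, which uses the identity $\mu\cdot\left(\lambda\cdot x\right)=\left(\mu\star\lambda\right)\cdot x$ coming from coassociativity, and a subspace of a space of countable type is again of countable type. The benefit of your route over the paper's is transparency about where discreteness of the valuation enters (complementation would fail over general base fields, while separation survives for any spherically complete $K$); the cost is nothing, since the hypotheses you use are already standing assumptions of the paper.
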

\begin{proof}
\cite[I.1]{bd2}.\end{proof}
\begin{lem}
\cite[II.1]{bd2} Let $f:M\to N$ be a morphism of Banach $\cten$-comodules.
Then 
\begin{enumerate}
\item Preimage of a Banach $\cten$-subcomodule of N is a Banach $\cten$-subcomodule
of M. In particular, $\Ker f$ is a closed $\cten$-subcomodule.
\item The closure of the image of f is a Banach $\cten$-subcomodule of
N.
\end{enumerate}
\end{lem}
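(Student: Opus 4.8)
The statement is the comodule analogue of the proposition on closed $\cten$-coideals proved above, and I would prove it in the same spirit. Part (2) is essentially a verbatim transcription of the coalgebra argument (the closure of the image is stable under the coaction), whereas part (1) needs one extra ingredient: that completed tensoring with $C$ commutes with taking preimages of closed subspaces. I would establish this auxiliary fact first and then combine it with the defining identity $(f \cten \id{C}) \circ \rho_M = \rho_N \circ f$ of a $\cten$-comodule morphism.

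For (1), let $W \subseteq N$ be a Banach $\cten$-subcomodule and put $U := f^{-1}(W)$; as the preimage of a closed set under a bounded map, $U$ is a closed subspace of $M$. For $u \in U$ the morphism identity gives
\[
(f \cten \id{C})\bigl(\rho_M(u)\bigr) = \rho_N\bigl(f(u)\bigr) \in \rho_N(W) \subseteq W \cten C,
\]
since $f(u) \in W$ and $W$ is a subcomodule. Thus $\rho_M(u) \in (f \cten \id{C})^{-1}(W \cten C)$, and everything reduces to the identity
\[
(f \cten \id{C})^{-1}(W \cten C) = U \cten C \qquad \mbox{inside } M \cten C.
\]
Granting this, $\rho_M(U) \subseteq U \cten C$, so $U$ is a Banach $\cten$-subcomodule; the claim about $\Ker f$ follows by taking $W = \{0\}$.

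The heart of the matter is this last identity, and I would prove it using that $K$ is discretely valued, so every Banach space is \emph{free}. Writing $C \cong c_0(E_C, K)$, one has a natural isometric identification $X \cten C \cong c_0(E_C, X)$ for every $X \in \Ban$, under which $f \cten \id{C}$ becomes the coordinatewise map $(x_e)_e \mapsto (f(x_e))_e$, and the closed subspaces $W \cten C$ and $U \cten C$ become $c_0(E_C, W)$ and $c_0(E_C, U)$. A null family $(x_e)_e$ in $M$ is carried into $c_0(E_C, W)$ precisely when $f(x_e) \in W$ for every $e$, i.e. when $x_e \in f^{-1}(W) = U$ for every $e$; this says exactly $(f \cten \id{C})^{-1}(c_0(E_C, W)) = c_0(E_C, U)$, which is the required identity. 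I expect this bookkeeping with the free description of $\cten$ to be the only genuine obstacle, the remainder being formal.

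For (2) I would copy the coalgebra computation. For every $m \in M$,
\[
\rho_N\bigl(f(m)\bigr) = (f \cten \id{C})\bigl(\rho_M(m)\bigr) \in \Imm f \cten C = \compl{\Imm f} \cten C,
\]
the equality holding because a completed tensor product depends only on the closures of its factors. Hence $\rho_N(\Imm f) \subseteq \compl{\Imm f} \cten C$; since $\rho_N$ is continuous and $\compl{\Imm f} \cten C$ is closed in $N \cten C$, passing to closures yields $\rho_N(\compl{\Imm f}) \subseteq \compl{\Imm f} \cten C$, which is exactly the assertion that $\compl{\Imm f}$ is a Banach $\cten$-subcomodule of $N$.
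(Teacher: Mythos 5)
Your proof is correct, but it is not the route the paper takes: the paper gives no argument at all for this lemma, simply citing \cite[II.1]{bd2}, and the toolkit it has just set up points to a different, softer proof. Immediately before this lemma the paper records (from \cite[I.1]{bd2}) that a closed subspace of a Banach $\cten$-comodule is a closed $\cten$-subcomodule if and only if it is a closed left $\bd C$-submodule for the induced module structure; since a $\cten$-comodule morphism $f$ is automatically $\bd C$-linear (check on elementary tensors that $f\circ(\id M\bten\lambda)=(\id N\bten\lambda)\circ(f\cten\id C)$, whence $f(\lambda\cdot m)=\lambda\cdot f(m)$), part (1) reduces to the trivial algebraic fact that preimages of submodules under module maps are submodules, and part (2) to the fact that the closure of the image of a module map is a submodule (closedness of the closure under the action uses only continuity of each $\lambda\cdot(-)$). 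Your argument instead works directly at the comodule level, and its genuine content is the identity $(f\cten\id C)^{-1}(W\cten C)=f^{-1}(W)\cten C$, which you prove correctly via the $c_{0}$-picture: over the discretely valued $K$ every Banach space is free, $X\cten C\cong c_{0}(E_{C},X)$ isometrically, $f\cten\id C$ becomes the coordinatewise map, and a closed subspace $W\subset N$ corresponds to $c_{0}(E_{C},W)$ (closedness of this identification is exactly the kind of fact the paper itself uses freely, e.g.\ in Proposition \ref{prop:ctenemb}). What your route buys is a self-contained proof that handles kernels and general preimages in one stroke; what it costs is an essential reliance on $K$ being discretely valued, whereas the duality route via \cite[I.1]{bd2} is formal once that correspondence is available. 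Your part (2) is, as you say, a verbatim transcription of the paper's proof of the corresponding statement for closures of images of $\cten$-coalgebra morphisms, and it is fine; note only that in part (1) of the coalgebra analogue the paper uses a quotient trick (preimage of a coideal is the kernel of the composite with the quotient map), which for comodules would merely shift the work to the kernel case and so does not avoid your key identity.
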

\begin{prop}
\cite[2.0.1]{sw} Let f be a morphism of Banach $\cten$-comodules.
\begin{enumerate}
\item If $L\subset M$ is a Banach $\cten$-subcomodule, then $M/L$ is
a Banach $\cten$-comodule and projection is a Banach $\cten$-comodule
morphism;
\item universal property of quotient holds.
\end{enumerate}
\end{prop}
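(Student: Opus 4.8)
The plan is to build the coaction on $M/L$ by descent along the quotient map $\pi\colon M\to M/L$, which exists in $\Ban$ since $L$ is closed, and then to transport the comodule axioms from $M$ using that $\pi$ is a surjection. First I would form the continuous map $(\pi\cten\id C)\circ\rho_M\colon M\to (M/L)\cten C$. Since $L$ is a $\cten$-subcomodule we have $\rho_M(L)\subseteq L\cten C$, and $\pi\cten\id C$ annihilates $L\cten C$ because $\pi(L)=0$; hence this composite vanishes on $L$. By the universal property of the quotient in $\Ban$ there is then a unique continuous map $\rho_{M/L}\colon M/L\to(M/L)\cten C$ with
\[
\rho_{M/L}\circ\pi=(\pi\cten\id C)\circ\rho_M .
\]
This identity is precisely the assertion that $\pi$ is a $\cten$-comodule morphism, so once $\rho_{M/L}$ is shown to be a coaction, part (1) is finished.

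Next I would verify the counit and coassociativity axioms for $\rho_{M/L}$; both reduce to diagram chases that use only the bifunctoriality of $\cten$ and the surjectivity of $\pi$. For the counit, bifunctoriality supplies the commuting relation $(\id{M/L}\bten\ep_C)\circ(\pi\cten\id C)=\pi\circ(\id M\bten\ep_C)$, so composing the defining relation with $\id{M/L}\bten\ep_C$ gives $(\id{M/L}\bten\ep_C)\circ\rho_{M/L}\circ\pi=\pi\circ(\id M\bten\ep_C)\circ\rho_M=\pi$, and since $\pi$ is surjective this forces $(\id{M/L}\bten\ep_C)\circ\rho_{M/L}=\id{M/L}$. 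Coassociativity is entirely analogous: I would precompose both $(\rho_{M/L}\cten\id C)\circ\rho_{M/L}$ and $(\id{M/L}\cten\cm_C)\circ\rho_{M/L}$ with $\pi$, rewrite each side by the defining relation together with the naturality squares for maps of the form $g\cten\id C$, reduce to the coassociativity of $\rho_M$, and cancel the surjection $\pi$ on the right.

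For part (2), suppose $f\colon M\to N$ is a $\cten$-comodule morphism with $L\subseteq\Ker f$. The universal property in $\Ban$ produces a unique continuous $\bar f\colon M/L\to N$ with $\bar f\circ\pi=f$, and it remains only to check that $\bar f$ is a comodule morphism. This again follows by cancelling $\pi$: from $(\bar f\cten\id C)\circ\rho_{M/L}\circ\pi=(f\cten\id C)\circ\rho_M=\rho_N\circ f=\rho_N\circ\bar f\circ\pi$ and surjectivity of $\pi$ one obtains $(\bar f\cten\id C)\circ\rho_{M/L}=\rho_N\circ\bar f$. Uniqueness of $\bar f$ is inherited from the $\Ban$-level factorization.

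The only genuinely nonarchimedean ingredient, and the point I would check most carefully, is that every manipulation stays at the level of the completed tensor product: the coaction lands in the completion $M\cten C$, so I cannot appeal to the algebraic identity $(M/L)\ten C=(M\ten C)/(L\ten C)$. What I rely on instead is (a) the symmetric monoidal structure of $(\Ban,\cten)$ recorded earlier, which furnishes all the naturality squares for the maps $g\cten\id C$ and the counit maps $\id M\bten\ep_C$ used above, and (b) the fact, already implicit in the definition of a $\cten$-subcomodule, that $\cten C$ sends the closed embedding $L\emb M$ to a closed embedding $L\cten C\emb M\cten C$, which over a discretely valued field follows from freeness of Banach spaces and orthonormality of the induced bases. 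Granting these, the factorization defining $\rho_{M/L}$ and every ``cancel the surjection $\pi$'' step is legitimate, and the proposition follows.
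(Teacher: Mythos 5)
Your proposal is correct and follows essentially the same route as the paper, which gives no independent argument but simply cites the algebraic case (Sweedler, 2.0.1): descend the coaction along the quotient map $\pi$ in $\Ban$, verify the counit and coassociativity axioms by cancelling the surjection $\pi$, and obtain the universal property by the same cancellation. Your closing paragraph also correctly isolates the only genuinely nonarchimedean point --- that $\rho_M(L)$ lands in the closure of $L\otimes C$ inside $M\cten C$, on which $\pi\cten\id C$ vanishes by continuity, with the closed embedding $L\cten C\emb M\cten C$ guaranteed by freeness of Banach spaces over a discretely valued field --- so the details you supply are a faithful completion of the argument the paper leaves implicit.
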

Intersection of two (or many) Banach $\cten$-subcomodules is again
Banach $\cten$-subcomodule.

For a Banach $\cten$-algebra $\left(A,\phi_{A},u_{A}\right)$ we
have defined its dual Banach $\cten$-coalgebra $\left(\bhd A,\bhd{\phi_{A}},\bhd{u_{A}}\right)$,
which is the largest Banach subspace of $\bd A$, which is a Banach
$\cten$-coalgebra w.r.t. $\bd{\phi_{A}}$. One can give a similar
definition for a Banach A -$\cten$-module.

For a right Banach $A$-$\cten$-module $\left(M,\phi_{M}\right)$
with multiplication $\phi_{M}:M\cten A\to M$, let $\bhd M:=\left(\bd{\phi_{M}}\right)^{-1}\left(\bd M\cten\bhd A\right)$.

Similar to the section \ref{sub:Dual-coalgebra}, recall that $\bd M$
is a left $A$-module with left action $\left(a\hits m'\right)\left(m\right)=m'\left(ma\right)$
for $m'\in\bd M$, $m\in M$, $a\in A$. For a fixed $m'$ it defines
a continuous linear map $\hits m':A\to\bd M$, $\hits m'\left(a\right)=a\hits m'$
with $\norm[\hits m']=\norm[m']_{\bd M}$. 
\begin{prop}
Let $m'\in\bhd M$. For $u:M\to N$ a homomorphism of right unital
Banach $\cten$-modules, denote by $\bhd u$ the restriction $u'|_{\bhd N}$.
\begin{enumerate}
\item $m'\in\bhd M$ iff $\hits m':A\to\bd M$ is a completely continuous
\emph{(}i.e. compact\emph{)} operator and $\hits m'\in\bd M\cten\bhd A\subset\bd M\cten\bd A$;
\item $a\hits m'\in\bhd M$ for all $a\in A$, and thus $\hits m':A\to\bhd M$;
\item $\phi_{M}'\left(\bhd M\right)\subset\bhd M\cten\bhd A$;
\item if $u:M\to N$ is a homomorphism of right unital Banach $\cten$-modules,
then $\bhd u\left(\bhd N\right)\subset\bhd M$;
\item for a right Banach $A-\cten$-module $\left(M,\phi_{M}\right)$ the
pair $\left(\bhd M,\bhd{\phi_{M}}\right)$ is a \emph{(}right\emph{)}
Banach $\bhd A-\cten$-comodule.
\end{enumerate}
\end{prop}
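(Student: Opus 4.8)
The plan is to establish the five claims in the same order as their coalgebra counterparts in Proposition~\ref{propBHD}, treating $\bhd M$ throughout as the module analogue of $\bhd A$ and reducing to the facts already proved for $\bhd A$ whenever the $A$-leg of a tensor is involved.

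First I would dispose of part (1) by unwinding the definition $\bhd M=\left(\bd{\phi_{M}}\right)^{-1}\left(\bd M\cten\bhd A\right)$. For $m'\in\bd M$ the functional $\bd{\phi_{M}}\left(m'\right)\in\bd{\left(M\cten A\right)}$ is the bilinear form $\left(m,a\right)\mapsto m'\left(ma\right)=\left(a\hits m'\right)\left(m\right)$, which is exactly the form attached to the operator $\hits m':A\to\bd M$. Under the isometric identification of $\bd M\cten\bd A$ with the space of compact operators $A\to\bd M$, valid over a discretely valued field (as used for $\bhd A$ via \cite[lemma 1]{BD5}), membership $\bd{\phi_{M}}\left(m'\right)\in\bd M\cten\bd A$ is equivalent to compactness of $\hits m'$, and the sharper requirement $\bd{\phi_{M}}\left(m'\right)\in\bd M\cten\bhd A$ is precisely the condition $\hits m'\in\bd M\cten\bhd A$. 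This proves (1) and fixes once and for all the picture $\bd{\phi_{M}}\left(m'\right)=\sum_{i}m'_{i}\cten a'_{i}$ with $a'_{i}\in\bhd A$.

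For (2) I would observe that left translation only reshuffles the $A$-leg: by associativity of the action $\left(a\hits m'\right)\left(mb\right)=m'\left(m\left(ba\right)\right)$, so $\hits\left(a\hits m'\right)=\hits m'\circ\left(b\mapsto ba\right)$ is the composite of the compact operator $\hits m'$ with a bounded map, hence compact, while in tensor form $\hits\left(a\hits m'\right)=\sum_{i}m'_{i}\cten\left(a\hits a'_{i}\right)$ still lies in $\bd M\cten\bhd A$ because $a\hits a'_{i}\in\bhd A$ by Proposition~\ref{propBHD}(2); thus $\hits m'$ corestricts to $\bhd M$. Part (4) is analogous: a module homomorphism $u$ dualizes to the intertwining relation $\phi_{M}'\circ u'=\left(u'\cten\id{A}\right)\circ\phi_{N}'$ on $\bhd N$, and since $u'\cten\id{A}$ maps $\bd N\cten\bhd A$ into $\bd M\cten\bhd A$, we get $\bhd u\left(\bhd N\right)\subset\bhd M$.

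The heart of the argument is (3), which must upgrade the first leg from $\bd M$ to $\bhd M$. Dualizing the associativity axiom gives $\left(\phi_{M}'\cten\id{A}\right)\circ\phi_{M}'=\left(\id{M}\cten m_{A}'\right)\circ\phi_{M}'$ on $\bhd M$; applied to $\phi_{M}'\left(m'\right)=\sum_{i}m'_{i}\cten a'_{i}$ it yields $\sum_{i}\phi_{M}'\left(m'_{i}\right)\cten a'_{i}=\sum_{i}m'_{i}\cten m_{A}'\left(a'_{i}\right)$, and the right-hand side lies in $\bd M\cten\bhd A\cten\bhd A$ by Proposition~\ref{propBHD}(3). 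To conclude $\phi_{M}'\left(m'_{i}\right)\in\bd M\cten\bhd A$ for each $i$ I would choose the decomposition so that the $a'_{i}$ run through an orthonormal basis of $\bhd A$ --- possible because every Banach space over the discretely valued field $K$ is free --- and then read off the coefficients of the outer $\bhd A$-leg: membership in $\left(\bd M\cten\bhd A\right)\cten\bhd A$ forces each coefficient $\phi_{M}'\left(m'_{i}\right)$ into $\bd M\cten\bhd A$, i.e. $m'_{i}\in\bhd M$, so $\phi_{M}'\left(m'\right)\in\bhd M\cten\bhd A$. I expect this coefficient-extraction step to be the main obstacle, since it is where an abstract membership in a triple completed tensor product must be turned into a leg-by-leg statement; the freeness of $K$-Banach spaces is exactly what makes it clean. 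Finally (5) assembles the pieces: by (3) the restriction $\bhd{\phi_{M}}=\phi_{M}'|_{\bhd M}$ is a bounded map $\bhd M\to\bhd M\cten\bhd A$, the counit axiom $\left(\id{\bhd M}\bten\bhd{u_{A}}\right)\circ\bhd{\phi_{M}}=\id{\bhd M}$ is the dual of $m\cdot1=m$, and coassociativity is the very identity obtained above by dualizing module associativity; hence $\left(\bhd M,\bhd{\phi_{M}}\right)$ is a right $\bhd A$-$\cten$-comodule.
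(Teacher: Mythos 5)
Your proposal is correct, and for parts (1), (2), (4) and (5) it runs essentially parallel to the paper's proof, which handles (1) and (2) by the same pairing computation $\left\langle \phi_{M}'\left(m'\right),m\ten a\right\rangle =\left(a\hits m'\right)\left(m\right)$ and dismisses (4) as a direct check and (5) as ``similar to \cite[Theorem 3]{BD5}''. The genuine divergence is in part (3), which is the heart of the statement. The paper's proof of (3) is a one-liner: it ``follows from (1) and (2)'' --- the natural reading being that by (2) the compact operator $\hits m'\colon A\to\bd M$ takes values in the closed subspace $\bhd M$, by (1) it is an element of $\bd M\cten\bhd A$, and since over a discretely valued field every closed subspace of a Banach space is orthocomplemented (a fact the paper uses elsewhere, e.g.\ in the proof of Proposition \ref{prop:ctenemb}), projecting the first tensor leg onto $\bhd M$ gives $\phi_{M}'\left(m'\right)=\hits m'\in\bhd M\cten\bhd A$. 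You instead dualize the associativity axiom to get $\sum_{i}\phi_{M}'\left(m'_{i}\right)\cten a'_{i}=\sum_{i}m'_{i}\cten m_{A}'\left(a'_{i}\right)$, invoke Proposition \ref{propBHD}(3) on the $A$-leg, and extract coefficients along an orthonormal basis of $\bhd A$. This is valid: expansions along an orthonormal basis of $\bhd A$ in $X\cten\bhd A$ are unique, $\bd M\cten\bhd A$ sits inside $\bd{\left(M\cten A\right)}$ as a closed isometric subspace, and the rearrangement of the double series on the right-hand side is justified nonarchimedeanly, so each coefficient $\phi_{M}'\left(m'_{i}\right)$ indeed lands in $\bd M\cten\bhd A$. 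What your route buys: it never uses (2), and the identity it establishes is literally the coassociativity of $\bhd{\phi_{M}}$, so your (5) becomes immediate rather than an appeal to \cite{BD5}. What the paper's route buys: brevity and an exact structural parallel with the coalgebra case of Proposition \ref{propBHD}, at the price of leaving the orthocomplementation step implicit. Both arguments ultimately rest on the same feature of discretely valued base fields, namely freeness of $K$-Banach spaces.
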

\begin{proof}
(1) is similar to \cite[lemma 1]{BD5}

Let $m'\in\bhd M$ and $\phi_{M}'\left(m'\right)=\sum m'_{j}\ten a'_{j}$,
$m_{j}'\in\bd M$, $a_{j}'\in\bhd A$. For all $a\in A$, $m\in M$
we have 
\[
\left(a\hits m'\right)\left(m\right)=\left\langle m',\phi_{M}\left(m\ten a\right)\right\rangle =\left\langle \phi_{M}'\left(m'\right),m\ten a\right\rangle =\sum m'_{j}\left(m\right)\bten a'_{j}\left(a\right).
\]
Thus we have $a\hits m'=\sum m_{j}'\bten a_{j}'\left(a\right)$ and
$\hits m'=\sum m_{j}'\ten a_{j}'\in\bd M\cten\bhd A$.

Conversely, let $\hits m'$ be completely continuous, $\hits m'=\sum m_{j}''\ten a_{j}''\in\bd M\cten\bhd A$.
For all $a\in A$, $m\in M$ 
\[
\left(a\hits m'\right)\left(m\right)=\sum m_{j}''\left(m\right)\ten a_{j}''\left(a\right)=m'\left(ma\right)=m'\left(\phi_{M}\left(m\ten a\right)\right)=\left\langle \phi_{M}'\left(m'\right),m\ten a\right\rangle .
\]
We get $\phi_{M}'\left(m'\right)=\sum m_{j}''\ten a_{j}''\in\bd M\cten\bhd A$
and thus $\phi_{M}'\left(m'\right)\in\bhd M$.

(2) is similar to \cite[lemma 2]{BD5}; One can check that $b\hits\left(a\hits m'\right)=ba\hits m'$.
Let $m'\in\bhd M$ and $\hits m'=\sum m_{j}'\ten a_{j}'\in\bd M\cten\bhd A$.
Then $ba\hits m'=\sum m_{j}'\bten a_{j}'\left(ba\right)=\sum m_{j}'\bten\left(a\hits a_{j}'\right)\left(b\right)$
and thus $\hits\left(a\hits m'\right)=\sum m_{j}'\ten\left(a\hits a_{j}'\right)$.
Since $a_{j}'\in\bhd A$, by proposition \ref{propBHD}.2, $a\hits a_{j}'\in\bhd A$.
Thus $\hits\left(a\hits m'\right)\in\bd M\cten\bhd A$ and by part
1 of this proposition $a\hits m'\in\bhd M$.

(3) follows from (1) and (2); (4) direct check; (5) is similar to
\cite[Theorem 3]{BD5}.\end{proof}
\begin{defn}
We call $\bhd M$ the \emph{dual }$\cten$-\emph{comodule} of $M$.
It is a the largest Banach subspace of $\bd M$, which is a Banach
$\bhd A$-$\cten$-comodule w.r.t. $\bd{m_{M}}$. 

For details, which are similar to the algebraic case, we refer to
\cite{HF}. For a morphism $f:M\to N$ of Banach $\cten$-modules
$\bhd f:\bhd N\to\bhd M$ is a morphism of Banach comodules and thus
we get a functor $\bhd{\left(-\right)}\Bmod A\to\rbcom{\bhd A}$.
If $A=\bd C$, then the functor $\bhd{\left(-\right)}$ is the right
adjoint to the functor $\bd{\left(-\right)}:\rbcom C\to\Bmod{\bd C}$
and preserves finite direct sums. If $M\in\rbcom C$ then $M\subset\bhd{\bd M}$.
\end{defn}
Since $\rbcom C\left(M,N\right)$ is a Banach space, for the pair
of morphisms $f,g\in\rbcom C\left(M,N\right)$ the kernel $\Ker{f-g}$
and the closure of the image $\compl{\Imm{f-g}}$ are equalizer and
coequalizer.

Over a Banach $\cten$-bialgebra $H$ one can define the tensor product
of two right Banach $\cten$-comodules $M$ and $N$ in the usual
way (\cite[I.I.1.1]{bd1}). Namely, it is $\left(M\cten N,\rho_{M\cten N}\right)$
with the coaction 
\[
\rho_{M\cten N}=\left(\Id_{M}\otimes\Id_{N}\otimes m_{H}\right)\circ\left(\Id_{M}\otimes\tau\otimes\Id_{H}\right)\circ\left(\rho_{M}\otimes\rho_{N}\right),
\]
where $\tau\left(a\otimes b\right)=b\otimes a$.

\subsection{Induction}

In the algebraic groups or quantum groups one can describe an induction
functor in terms of cotensor product of comodules over coordinate
algebra (for quantum case see, for example, \cite{L,PW}). Similar
definitions work for Banach $\cten$-coalgebras (bi-, Hopf $\cten$-algebras).

Let $C$ and $B$ be a Banach $\cten$-coalgebras. Let $\left(M,\rho_{M}\right)$
be a right Banach $C-$$\cten$-comodule and let $\left(N,{}_{N}\rho\right)$
be a left Banach $C$-$\cten$-comodule.
\begin{defn}
The space $M\ccten CN=Ker\left(\rho_{M}\otimes id_{N}-id_{M}\otimes{}_{N}\rho\right)$
is called cotensor product of $M$ and $N$ over $C.$ 

Since $M\ccten CN$ is a kernel of a continuous map, it is a closed
subspace of $M\cten N$. Thus if $M$ and $N$ are both Banach spaces,
then $M\ccten CN$ is a Banach space. It is an equalizer of $\rho_{M}\otimes id_{N}$
and $id_{M}\otimes{}_{N}\rho$.
\end{defn}
If $N$ is also a right Banach $B$-$\cten$-comodule, then $M\ccten CN$
is also a right Banach $B$-$\cten$-comodule.

If $f:M\to L$ is a Banach $\cten$-comodule morphism, one has a morphism
$f\ccten CN:M\ccten CN\to L\ccten CN$ of Banach spaces, which is
defined as a restriction $\left(f\cten id_{N}\right)|_{M\ccten CN}$.
Thus cotensoring with a left Banach $\cten$-comodule $N$ gives a
functor $\left(-\ccten CN\right):\rbcom C\to\Ban$. If $N$ is also
a right Banach $B$-$\cten$-comodule, then we have a functor $\left(-\ccten CN\right):\rbcom C\to\rbcom B$.
\begin{defn}
Let $A$ be a Banach $\cten$-algebra. For $M\in\Bmod A$ and $N\in\lbmod A$
define $M\cten_{A}N$ as the quotient Banach space of $M\cten N$
by the closure of the linear hull of the set $\left\{ ma\ten n-m\ten an\right\} $
for all $m\in M,n\in N,a\in A$.
\end{defn}
$M\cten_{A}N$ is an equalizer of the maps $\phi_{M}\ten\id N$ and
$\id M\ten\phi_{N}$, i.e. we have a diagram $M\cten A\cten N\rightrightarrows M\cten N\to M\cten_{A}N\to0$
in $\Ban$. $M\cten_{A}N$ is also a completion of the algebraic tensor
product $M\cten_{A}N$ w.r.t. cross-norm $\norm_{M}\ten\norm_{N}$.
\begin{prop}
\label{prop:ctenemb}For $M\in\rbcom C$, $N\in\lbcom C$ the space
$\bd M\cten_{\bd C}\bd N$ is a closed subspace of $\bd{\left(M\ccten CN\right)}$.\end{prop}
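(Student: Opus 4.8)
The plan is to produce an explicit continuous injection with closed image. Write $A=\bd C$; by the duality recalled above, $\bd M$ is a right $A$-module via $m'\cdot a=\left(m'\bten a\right)\circ\rho_M$ and $\bd N$ a left $A$-module via $a\cdot n'=\left(a\bten n'\right)\circ{}_N\rho$. Recall that $M\ccten CN=\Ker{\theta}$ for the continuous map $\theta:=\rho_M\cten\id N-\id M\cten{}_N\rho:M\cten N\to M\cten C\cten N$, while $\bd M\cten_A\bd N=\left(\bd M\cten\bd N\right)/J$, where $J$ is the closed linear span of $\left\{ m'a\ten n'-m'\ten an'\right\}$. First I would compose the canonical isometric embedding $\bd M\cten\bd N\emb\bd{\left(M\cten N\right)}$ with the restriction map $r:\bd{\left(M\cten N\right)}\to\bd{\left(M\ccten CN\right)}$ to the closed subspace $M\ccten CN$. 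Since $K$ is discretely valued and hence spherically complete, Hahn--Banach applies, so $r$ is surjective with $\Ker r=\ann{M\ccten CN}$; denote the composite by $\psi:\bd M\cten\bd N\to\bd{\left(M\ccten CN\right)}$.

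Next I would check that $\psi$ annihilates $J$, so that it descends to $\bar\psi:\bd M\cten_A\bd N\to\bd{\left(M\ccten CN\right)}$. The key is a Sweedler computation showing that the restriction of the adjoint $\theta'$ to the subspace $\bd M\cten\bd C\cten\bd N$ equals $\alpha-\beta$, where $\alpha=\phi_{\bd M}\cten\id{\bd N}$ and $\beta=\id{\bd M}\cten\phi_{\bd N}$ are the two structure maps whose coequalizer defines $\cten_A$. Concretely, for $x=\sum_i m_i\ten n_i\in M\ccten CN$ the defining identity $\sum_i\rho_M\left(m_i\right)\ten n_i=\sum_i m_i\ten{}_N\rho\left(n_i\right)$, paired against $m'\ten a\ten n'$, yields $\left(m'a\ten n'\right)\left(x\right)=\left(m'\ten an'\right)\left(x\right)$. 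Hence each generator of $J$ lies in $\Imm{\theta'}\subseteq\ann{M\ccten CN}=\Ker r$, and by continuity $J\subseteq\Ker{\psi}$.

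It remains to see that $\bar\psi$ is a topological embedding with closed image. Using Hahn--Banach once more, for $\xi\in\bd M\cten\bd N$ one computes $\norm[\bar\psi\left(\xi+J\right)]=\mathrm{dist}\left(\xi,\ann{M\ccten CN}\right)$, whereas the quotient norm is $\norm[\xi+J]=\mathrm{dist}\left(\xi,J\right)$; since $J\subseteq\ann{M\ccten CN}$, the map $\bar\psi$ is norm--nonincreasing. If I can establish the reverse inequality, i.e. the isometry statement
\[
\mathrm{dist}\left(\xi,J\right)=\mathrm{dist}\left(\xi,\ann{M\ccten CN}\right)\qquad\text{for all }\xi\in\bd M\cten\bd N,
\]
then $\bar\psi$ is isometric, hence injective, and its image---being the isometric image of the complete space $\bd M\cten_A\bd N$---is closed, which is exactly the assertion.

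The main obstacle is this final distance equality, equivalently $\left(\bd M\cten\bd N\right)\cap\ann{M\ccten CN}=J$ together with a matching of the distances. The difficulty is genuine: because $\bd{\left(M\cten N\right)}$ is strictly larger than $\bd M\cten\bd N$, the adjoint $\theta'$ may have a larger image on the full dual than on the subspace $\bd M\cten\bd C\cten\bd N$, so one cannot transport closures for free. I expect to settle it with the nonarchimedean functional analysis available over a discretely valued $K$---every Banach space is free and the relevant closed subspaces are orthocomplemented---together with the technical tensor/cotensor lemmas collected in the appendix, which should guarantee that any $\xi\in\bd M\cten\bd N$ vanishing on $M\ccten CN$ is approximable by elements of $\Imm{\left(\alpha-\beta\right)}$ and therefore lies in $J$.
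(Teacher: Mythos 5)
Your setup is sound and in fact follows the same skeleton as the paper's own argument: dualize the defining sequence $0\to M\ccten CN\to M\cten N\overset{\theta}{\to}M\cten C\cten N$, use the isometric embedding $\bd M\cten\bd N\emb\bd{\left(M\cten N\right)}$, observe that the generators $m'a\ten n'-m'\ten an'$ lie in $\Imm{\theta'}\subseteq\ann{M\ccten CN}$, and descend to a continuous map $\bd M\cten_{\bd C}\bd N\to\bd{\left(M\ccten CN\right)}$. All of this is correct (and the surjectivity of the restriction map via Hahn--Banach is legitimate, since a complete discretely valued field is spherically complete). But your proof stops exactly where the proposition begins: the identity $\left(\bd M\cten\bd N\right)\cap\ann{M\ccten CN}=J$, together with the matching of distances that you need for $\bar\psi$ to be isometric, is the entire content of the statement, and you leave it as an expectation rather than an argument. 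Your own diagnosis of why it is delicate --- that $\theta'$ on the full dual $\bd{\left(M\cten C\cten N\right)}$ may have a larger image than its restriction to $\bd M\cten\bd C\cten\bd N$, so closures cannot be transported for free --- is precisely right, and that is why this is a genuine gap rather than a routine verification.

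The paper closes this gap with an explicit orthogonal-basis argument, which is the ingredient your sketch is missing. Since $K$ is discretely valued, $M$, $N$, $C$ are free: $M=c_{0}\left(E_{M},K\right)$, etc., with orthogonal bases $\left\{ e_{m}\right\} $, $\left\{ e_{n}\right\} $, $\left\{ e_{c}\right\} $. Writing $M\ccten CN=\bigcap\ker\left(e_{m}'e_{c}'\ten e_{n}'-e_{m}'\ten e_{c}'e_{n}'\right)$ identifies $\ann{M\ccten CN}$ with the {*}-weak closure $I^{\wedge*w}$ of the span $I$ of these elements (the annihilator of a common kernel of a family of functionals is the {*}-weak closure of their span). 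Now complete an orthogonal basis $E_{I}$ of $I$ to an orthogonal basis $E'\cup E_{I}$ of the free normed space $\bd M\ten\bd N$. The point is that this single orthogonal decomposition is compatible with both completions: taking norm completions gives $\bd M\cten\bd N\simeq\left(\bd M\cten_{\bd C}\bd N\right)\bigoplus I^{\wedge}$, while taking {*}-weak completions gives $\bd{\left(M\cten N\right)}\simeq\left(\bd M\cten_{\bd C}\bd N\right)^{\wedge*w}\bigoplus I^{\wedge*w}$. Comparing the two splittings yields $\left(\bd M\cten\bd N\right)\cap I^{\wedge*w}=I^{\wedge}$ --- exactly your missing identity --- and exhibits $\bd M\cten_{\bd C}\bd N=\bd M\cten\bd N/I^{\wedge}$ as a closed subspace of $\bd{\left(M\ccten CN\right)}\cong\bd{\left(M\cten N\right)}/I^{\wedge*w}$, with no separate isometry estimate needed. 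So the missing step is not settled by an abstract approximation property of the appendix lemmas; what does the work is the fact that, over a discretely valued field, an orthogonal basis of the subspace $I$ extends to an orthogonal basis of the ambient space, which forces all the relevant closures to split simultaneously.
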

\begin{proof}
Consider the defining exact sequence for $\cten$-cotensor product
\begin{center}
\begin{tikzpicture}[auto]
\node (o) {$0$};   
\node (c) [right of=o] {$M\ccten C N$};   
\node (cc) [right= and 1cm of c] {$M\cten N$};   
\node (ccc) [right= and 1cm of cc] {$M\cten C\cten N$,};   
\draw[->] (o) to node {} (c);
\draw[->] (c) to node {$\phi$} (cc);
\draw[->] (cc) to node {$\psi$} (ccc);
\end{tikzpicture} 
\end{center}where $\psi=\rho_{M}\cten\id N-\id M\cten\rho_{N}$.

Taking duals gives us the following sequence\begin{center}
\begin{tikzpicture}[auto]    
\node (o) {$0$};   
\node (c) [right= and 0.3cm of o] {$\sd{M\ccten C N}$};   
\node (cc) [right= and 1cm of c] {$\sd{M\cten N}$};   
\node (ccc) [right= and 1cm of cc] {$\sd{M\cten C\cten N}$.};   
\draw[->] (c) to node[swap] {} (o);
\draw[->] (cc) to node[swap] {$\phi'$} (c);
\draw[->] (ccc) to node[swap] {$\psi'$} (cc);
\end{tikzpicture} 
\end{center}The space $\bd{\left(M\cten N\right)}$ contains $\bd M\cten\bd N$as
a closed subspace \cite[4.34]{vanr}. Since $M\ccten CN$ is a complemented
subspace of $M\cten N=\left(M\ccten CN\right)\bigoplus W$, we have
a decomposition of the dual space 
\[
\bd{\left(M\cten N\right)}=\bd{\left(M\ccten CN\right)}\bigoplus\bd W
\]
 and clearly 
\[
\bd W\cong\left(M\ccten CN\right)^{\perp}=\left\{ \phi\in\bd{\left(M\cten N\right)}:\,\phi\left(x\right)=0\;\forall x\in M\ccten CN\right\} .
\]

Since $K$ is discretely valued, all $K$-Banach spaces are free.
Let $M=c_{0}\left(E_{M},K\right)$, $N=c_{0}\left(E_{N},K\right)$,
$C=c_{0}\left(E_{C},K\right)$ and $\left\{ e_{m}\right\} $, $\left\{ e_{n}\right\} $,
$\left\{ e_{c}\right\} $ be the corresponding orthogonal bases. Then
$M\ccten CN=\bigcap\ker(\left(e_{m}\ten e_{c}\ten e_{n}\right)\circ(\rho_{M}\cten\id N-\id M\cten\rho_{N}))$
or, in terms of dual bases, $M\ccten CN=\bigcap\ker\left(e_{m}'e_{c}'\ten e_{n}'-e_{m}'\ten e_{c}'e_{n}'\right)$.
Thus $\left(M\ccten CN\right)^{\perp}$ is equal to the {*}-weak closure
$I^{\wedge*w}$ (\cite[4.7]{R}, which relies on \cite[3.5]{R}, which
can be easily seen via \cite[7.3.1]{PGSch} and \cite[5.1.4, 5.1.6]{PGSch}
or \cite[9.5]{nfa}) of the linear space 
\[
I=\left\{ \sum\left(m'c'\ten n'-m'\ten c'n'\right),\, m'\in\bd M,n'\in\bd N,c'\in\bd C\right\} 
\]
 (actually, just of the elements $\left(e_{m}'e_{c}'\ten e_{n}'-e_{m}'\ten e_{c}'e_{n}'\right)$).
Thus we have a topological isomorphism $\bd{\left(M\cten N\right)}=\bd{\left(M\ccten CN\right)}\bigoplus I^{\wedge*w}$.

Since $\bd M\ten\bd N$ is a free normed space, we can complete an
orthogonal basis $E_{I}$ of $I$ to an orthogonal basis $E_{\bd M\ten\bd N}=E'\cup E_{I}$
of $\bd M\ten\bd N\simeq\left(\bd M\ten_{\bd C}\bd N\right)\bigoplus I$.
Taking norm completions we get $\bd M\cten\bd N\simeq\left(\bd M\cten_{\bd C}\bd N\right)\bigoplus I^{\wedge}$
and taking {*}-weak completion we get $\bd{\left(M\cten N\right)}=\left(M'\cten_{\bd C}N'\right)^{\wedge*w}\bigoplus I^{\wedge*w}$.
Thus we see that inside $\bd{\left(M\cten N\right)}$ we have $\left(\bd M\cten\bd N\right)\cap I^{\wedge*w}=I^{\wedge}$
and that under the quotient map $\bd{\left(M\cten N\right)}/\left(M\ccten CN\right)^{\perp}\to\bd{\left(M\ccten CN\right)}$
we have an embedding $\bd M\cten\bd N/I^{\wedge}=\bd M\cten_{\bd C}\bd N\emb\bd{\left(M\ccten CN\right)}$.
\end{proof}
Let $\phi:C\to B$ be a morphism of Banach $\cten$-coalgebras. Then
$C$ is a left and right Banach $B-$$\cten$-comodule via maps 
\[
\rho_{l\phi}=\left(\phi\otimes\id C\right)\circ\Delta_{C}
\]
 and 
\[
\rho_{r\phi}=\left(\id C\otimes\phi\right)\circ\Delta_{C}.
\]
 Denote those $\cten$-comodules by $_{\phi}C:=\left(C,\rho_{l\phi}\right)$
and $C_{\phi}:=\left(C,\rho_{r\phi}\right)$.
\begin{defn}
Let $M\in\rbcom B$. Then $M\ccten B{}_{\phi}C\in\rbcom C$ is called
the \emph{induced }$\cten$-\emph{comodule.} The other notation is
$M^{\phi}.$ The functor $\left(-\right)^{\phi}$ is called induction.
\end{defn}
\smallskip{}

\begin{defn}
For $M\in\rbcom C$, $M_{\phi}$ will denote the $B-$$\cten$-comodule
$M$ with coaction $M\overset{\rho_{M}}{\to}M\cten C\overset{id\otimes\phi}{\to}M\cten B.$
The functor $\left(-\right)_{\phi}$ is called restriction. \end{defn}
\begin{lem}
\label{lem:-is-a} The map $\id M\bten\epsilon_{C}:M\ccten B{}_{\phi}C\longrightarrow M$
such that $\left(\id M\bten\epsilon_{C}\right)\left(m\otimes c\right)=\epsilon\left(c\right)m$
is a morphism of Banach $B$-$\cten$-comodules $\left(M^{\phi}\right)_{\phi}$
and M. If $C\subset B$ then the image of $\left(\id M\bten\epsilon_{C}\right)\left(M^{\phi}\right)$
is the maximal closed subspace of M, which is an $C$-$\cten$-comodule.\end{lem}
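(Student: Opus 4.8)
The plan is to prove the two assertions of the lemma separately. That $f:=\id M\bten\ep_C$ is a morphism of right $B$-$\cten$-comodules $(M^\phi)_\phi\to M$ holds for an arbitrary morphism $\phi$ of Banach $\cten$-coalgebras and is a formal Sweedler computation; the identification of its image as the maximal $C$-subcomodule needs the extra hypothesis that $C\subset B$ is a closed $\cten$-subcoalgebra, together with a little nonarchimedean functional analysis.

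For the first assertion, recall that the coaction on $(M^\phi)_\phi$ is $(\id{M^\phi}\ten\phi)\circ\rho_{M^\phi}$, where the right $C$-coaction $\rho_{M^\phi}$ is the one inherited from $\cm_C$, so that $\rho_{M^\phi}(\sum_i m_i\ten c_i)=\sum_i m_i\ten c_{i(1)}\ten c_{i(2)}$. Fixing $\xi=\sum_i m_i\ten c_i\in M\ccten B{}_\phi C$, I would compute the two sides of the morphism condition $(f\cten\id B)\circ\rho_{(M^\phi)_\phi}=\rho_M\circ f$. The left-hand side equals $\sum_i\ep_C(c_{i(1)})m_i\ten\phi(c_{i(2)})=\sum_i m_i\ten\phi(c_i)$ by the counit axiom for $C$. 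For the right-hand side, $\rho_M(f(\xi))=\sum_i\ep_C(c_i)\rho_M(m_i)$; applying $\id M\ten\id B\ten\ep_C$ to the defining relation $\sum_i\rho_M(m_i)\ten c_i=\sum_i m_i\ten\phi(c_{i(1)})\ten c_{i(2)}$ of the cotensor product and using the counit of $C$ once more gives $\sum_i\ep_C(c_i)\rho_M(m_i)=\sum_i m_i\ten\phi(c_i)$. The two sides coincide, and boundedness is clear since $\norm[f]\leq\norm[\ep_C]$; hence $f$ is a morphism of Banach $B$-$\cten$-comodules.

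Now assume $\phi$ is the inclusion of a closed $\cten$-subcoalgebra $C\subset B$, and set $N_0:=f(M^\phi)$. I would first record two identities. Specializing the computation above to the inclusion shows $\rho_M\circ f=\iota$, the canonical embedding $M\ccten B{}_\phi C\emb M\cten C\subset M\cten B$; in particular $n\in N_0$ forces $\rho_M(n)\in M\cten C$. Conversely, suppose $n\in M$ satisfies $\rho_M(n)\in M\cten C$ and write $\rho_M(n)=n_{(0)}\ten n_{(1)}$ with $n_{(1)}\in C$. Coassociativity of $\rho_M$ together with the fact that $\cm_B$ restricts to $\cm_C$ on the subcoalgebra $C$ makes both legs of the cotensor equalizer applied to $\rho_M(n)$ collapse to $n_{(0)}\ten n_{(1)(1)}\ten n_{(1)(2)}$, so $\rho_M(n)\in M\ccten B{}_\phi C$; the counit axiom for $M$ and $\ep_B|_C=\ep_C$ then give $f(\rho_M(n))=\ep_C(n_{(1)})n_{(0)}=n$. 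Combining the two, $N_0=\{n\in M:\rho_M(n)\in M\cten C\}=\rho_M^{-1}(M\cten C)$. Since $C\subset B$ is closed, $M\cten C$ is a closed subspace of $M\cten B$ (exhibited by extending an orthogonal basis of $C$ to one of $B$, exactly as in the proof of Proposition~\ref{prop:ctenemb}), so $N_0$ is closed as the preimage of a closed subspace under the continuous map $\rho_M$.

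Finally I would check that $N_0$ is the maximal closed $C$-$\cten$-subcomodule of $M$. Maximality is immediate: any closed $N\subset M$ with $\rho_M(N)\subset N\cten C$ satisfies $\rho_M(N)\subset M\cten C$, whence $N\subset N_0$. To see that $N_0$ is itself a $C$-subcomodule, take $n\in N_0$ and expand $\rho_M(n)=\sum_c x_c\ten e_c$ along an orthogonal basis $\{e_c\}$ of $C$, so $x_c\in M$ and $x_c\to0$. Coassociativity gives $\sum_c\rho_M(x_c)\ten e_c=\sum_c x_c\ten\cm_C(e_c)\in M\cten C\cten C$, again because $C$ is a subcoalgebra. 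Pairing the last tensor factor with the bounded coordinate functionals dual to $\{e_c\}$ then isolates each $\rho_M(x_c)$ inside $M\cten C$, so $x_c\in N_0$ and $\rho_M(n)\in N_0\cten C$. This coefficient-extraction step, resting on the freeness of Banach spaces over the discretely valued field $K$ and on the closedness of $M\cten C$ in $M\cten B$, is the point I expect to demand the most care; everything else is formal Sweedler bookkeeping and the counit axioms.
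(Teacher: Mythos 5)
Your proposal is correct and takes essentially the same approach as the paper: the paper's entire proof is the remark that the argument is identical to the algebraic case, and your write-up is precisely that classical counit/Sweedler computation, identifying the image with $\rho_{M}^{-1}\left(M\cten C\right)$ and checking it is the maximal closed $C$-$\cten$-subcomodule. Your only additions are the nonarchimedean details (closedness of $M\cten C$ in $M\cten B$ via orthogonal complementation over a discretely valued field, and coefficient extraction with the bounded coordinate functionals of an orthogonal basis), which are exactly the tools the paper itself already uses, e.g.\ in the proof of Proposition \ref{prop:ctenemb}.
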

\begin{proof}
the proof is the same as in algebraic case.\end{proof}
\begin{cor}
$M\ccten BB\cong M$ (in fact, $M\ccten BB=\rho_{M}\left(M\right)$).
\end{cor}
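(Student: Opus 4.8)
The plan is to exhibit the counit map from Lemma \ref{lem:-is-a} and the coaction $\rho_M$ as mutually inverse morphisms. Taking $\phi=\id B$ in the induction setup, the coalgebra $B$ becomes a left Banach $B$-$\cten$-comodule over itself via $\Delta_B$, and by Lemma \ref{lem:-is-a} the map $\id M\bten\epsilon_B\colon M\ccten BB\to M$, $(\id M\bten\epsilon_B)(m\otimes b)=\epsilon_B(b)m$, is a morphism of Banach $B$-$\cten$-comodules. I claim its inverse is $\rho_M$ itself, regarded as a map $M\to M\cten B$. First I would observe that the coassociativity axiom $(\rho_M\otimes\id B)\circ\rho_M=(\id M\otimes\Delta_B)\circ\rho_M$ says precisely that $\rho_M(M)$ lies in $\Ker{\rho_M\cten\id B-\id M\cten\Delta_B}=M\ccten BB$, so $\rho_M$ does land in the cotensor product.

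For the reverse containment and the identity $M\ccten BB=\rho_M(M)$, I would take $x\in M\ccten BB\subset M\cten B$ and apply the counit to the third tensor factor of the defining equation $(\rho_M\cten\id B)(x)=(\id M\cten\Delta_B)(x)$ in $M\cten B\cten B$. Writing $x=\sum m\otimes b$ in Sweedler-type notation, the left-hand side becomes $\sum\epsilon_B(b)\,\rho_M(m)=\rho_M\bigl((\id M\bten\epsilon_B)(x)\bigr)$, while the right-hand side becomes $\sum m\otimes b_{(1)}\epsilon_B(b_{(2)})=\sum m\otimes b=x$ by the counit axiom for $B$. Hence $\rho_M\bigl((\id M\bten\epsilon_B)(x)\bigr)=x$, which simultaneously shows $x\in\rho_M(M)$ (giving $M\ccten BB\subseteq\rho_M(M)$, so together with the first step $M\ccten BB=\rho_M(M)$) and that $\rho_M\circ(\id M\bten\epsilon_B)$ is the identity on $M\ccten BB$. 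Combined with the comodule counit axiom $(\id M\bten\epsilon_B)\circ\rho_M=\id M$, the two maps are mutually inverse.

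Finally I would upgrade this algebraic bijection to a topological isomorphism. Both $\rho_M$ and $\id M\bten\epsilon_B$ are bounded, and the image $\rho_M(M)=M\ccten BB$ is closed, being the kernel of a continuous map; thus $\rho_M\colon M\to M\ccten BB$ is a continuous bijection with continuous inverse, hence a topological isomorphism (equivalently, one may invoke the open mapping theorem for Banach spaces). The only genuinely delicate point is functional-analytic rather than formal: one must justify applying $\id M\otimes\id B\bten\epsilon_B$ termwise to an element of the completed triple tensor product $M\cten B\cten B$, which is legitimate because the map is continuous and the Sweedler sums are, by our convention, countable and convergent. Everything else is a direct transcription of the classical coalgebra computation into the complete-tensor-product setting.
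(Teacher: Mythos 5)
Your proof is correct and is essentially the paper's own argument: the paper states this as an immediate consequence of Lemma \ref{lem:-is-a} (whose proof it declares to be ``the same as in the algebraic case''), and that classical argument is exactly what you have written out --- the coaction $\rho_{M}$ and the counit map $\id M\bten\epsilon_{B}$ are mutually inverse, with coassociativity giving $\rho_{M}\left(M\right)\subseteq M\ccten BB$ and the counit applied to the last tensor factor giving the reverse inclusion. Your additional care about continuity of the maps on completed tensor products and the resulting topological isomorphism is exactly the transcription the paper intends.
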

For a Banach $\cten$-coalgebra $C$ a finite direct product $C^{n}=\left\{ {\displaystyle \sum_{i=1}^{n}c_{i}e_{i}|\,}c_{i}\in C\right\} $
has a right (and left) Banach $C$-$\cten$-comodule structure with
both right coaction 
\[
\rho_{rC^{n}}\left(\sum_{i=1}^{n}c_{i}e_{i}\right)=\sum\left(c_{i}\right)_{\left(0\right)}e_{i}\ten\left(c_{i}\right)_{\left(1\right)}
\]
 and left coaction 
\[
\rho_{lC^{n}}\left(\sum_{i=1}^{n}c_{i}e_{i}\right)=\sum\left(c_{i}\right)_{\left(0\right)}\ten\left(c_{i}\right)_{\left(1\right)}e_{i}
\]
 defined coordinate-wise.
\begin{defn}
A (right) Banach $C$-$\cten$-comodule $M$ is called \emph{finitely
cogenerated} if it is a closed $\cten$-subcomodule of $C^{n}$.\end{defn}
\begin{prop}
\label{prop:IndFinCogen}Let $M\in\rbcom B$ and $N\in\rbcom C$.
\begin{enumerate}
\item If $M$ is a finitely cogenerated Banach $B$-$\cten$-comodule, then
$M^{\phi}$ is a finitely cogenerated Banach $C$-$\cten$-comodule;
\item If $\phi$ is injective and N is finitely cogenerated, then $N_{\phi}$
is a finitely cogenerated Banach $B$-$\cten$-comodule.
\item if $M$ is finitely generated Banach $\cten$-module over a Banach
$\cten$-algebra $A$, then $\bhd M$ is finitely cogenerated over
$\bhd A$.
\end{enumerate}
\end{prop}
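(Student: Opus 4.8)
The plan is to prove all three parts by transporting the defining closed embedding through the relevant functor, exploiting the fact already used in Proposition~\ref{prop:ctenemb} that over the discretely valued field $K$ every closed subspace of a Banach space is orthogonally complemented, so closed embeddings survive $\cten$, the cotensor functor, and duality. For (1), write the hypothesis as a closed $B$-subcomodule embedding $\iota:M\emb B^{n}$ and apply the induction functor $\left(-\ccten B{}_{\phi}C\right)$. The key computation is that $B^{n}\ccten B{}_{\phi}C\cong C^{n}$ as right $C$-comodules: indeed $B\ccten B{}_{\phi}C\cong{}_{\phi}C$ via $\ep_{B}$, the left-comodule analogue of the isomorphism $M\ccten BB\cong M$, and this identification carries the residual right $C$-coaction of the $(B,C)$-bicomodule ${}_{\phi}C$ to the regular coaction $\cm_{C}$ on $C$; taking $n$-fold products gives $C^{n}$. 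Since the cotensor product is by definition a kernel, $\left(-\ccten B{}_{\phi}C\right)$ is left exact, and concretely, decomposing $B^{n}=M\oplus W$ as Banach spaces yields $B^{n}\cten{}_{\phi}C=\left(M\cten{}_{\phi}C\right)\oplus\left(W\cten{}_{\phi}C\right)$, so the kernel defining $M\ccten B{}_{\phi}C$ is a closed subspace of the kernel defining $B^{n}\ccten B{}_{\phi}C$. Hence $\iota\ccten B{}_{\phi}C$ realizes $M^{\phi}$ as a closed $C$-subcomodule of $C^{n}$.

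For (2), I would first note that because $\phi$ is a morphism of coalgebras, $\cm_{B}\circ\phi=\left(\phi\cten\phi\right)\circ\cm_{C}$, which says exactly that $\phi:C_{\phi}\to B$ is a morphism of right $B$-comodules from $C$ with the restricted coaction $\left(\id C\cten\phi\right)\circ\cm_{C}$ to the regular comodule $B$. Now if $N\emb C^{n}$ is a closed $C$-subcomodule, restriction leaves the underlying closed embedding unchanged and turns it into a $B$-comodule embedding $N_{\phi}\emb\left(C^{n}\right)_{\phi}=\left(C_{\phi}\right)^{n}$. Composing with the coordinatewise map $\phi^{n}:\left(C_{\phi}\right)^{n}\to B^{n}$ (a $B$-comodule map by the above) gives a $B$-comodule morphism $N_{\phi}\to B^{n}$. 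It is a closed embedding provided $\phi$ is bounded below, which is what injectivity of the Banach-coalgebra morphism $\phi$ must mean here (equivalently, that $C$ is a closed subcoalgebra of $B$); this is the single place the hypothesis on $\phi$ enters.

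For (3), I would dualize the presentation. Finite generation of $M$ over $A$ gives a strict (open, by the open mapping theorem) surjection $q:A^{n}\to M$ of Banach $A$-modules. Its adjoint $q':\bd M\to\left(\bd A\right)^{n}=\bd{\left(A^{n}\right)}$ is then a closed embedding of Banach spaces, since the dual of a strict surjection is a strict injection over a discretely valued $K$ (the spaces being pseudo-reflexive), and it is a $\bd A$-module map. Applying the dual-comodule functor $\bhd{\left(-\right)}:\Bmod A\to\rbcom{\bhd A}$, using its functoriality (the module version of Proposition~\ref{propBHD}(4), giving $\bhd q\left(\bhd M\right)\subset\bhd{\left(A^{n}\right)}=\left(\bhd A\right)^{n}$) and that it sends $A^{n}$ to $\left(\bhd A\right)^{n}$, I obtain a $\bhd A$-comodule morphism $\bhd q=q'|_{\bhd M}:\bhd M\to\left(\bhd A\right)^{n}$. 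Since $\bhd M$ is closed in $\bd M$ (it is the preimage under $\bd{\phi_{M}}$ of the closed subspace $\bd M\cten\bhd A$) and $q'$ is a closed embedding, the restriction $\bhd q$ is a closed embedding onto its image, exhibiting $\bhd M$ as a closed $\bhd A$-subcomodule of $\left(\bhd A\right)^{n}$.

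The main obstacle in all three parts is not the algebra but the bookkeeping required to keep embeddings closed after applying the functors, together with correctly identifying the target objects $B^{n}\ccten B{}_{\phi}C\cong C^{n}$ in (1) and $\bhd{\left(A^{n}\right)}\cong\left(\bhd A\right)^{n}$ in (3); the preservation of closedness itself is cheap because of orthogonal complementation over a discretely valued field, exactly as exploited in Proposition~\ref{prop:ctenemb}. The subtlest single point is the closedness of $\phi$ in part (2), which is why the injectivity hypothesis there should be read as a strict (topological) embedding.
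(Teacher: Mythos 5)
Your proposal is correct and takes essentially the same route as the paper: part (1) is proved there by exactly your computation $M^{\phi}=M\ccten B{}_{\phi}C\emb B^{n}\ccten B{}_{\phi}C=\left(B\ccten B{}_{\phi}C\right)^{n}\cong C^{n}$, while the paper simply declares (2) obvious and refers (3) to the dualization argument of \cite{HF}, which is what you carry out. Your added justifications (orthogonal complementation to keep the embedding closed in (1), reading injectivity in (2) as a closed embedding $C\subset B$ consistent with the phrasing of Lemma \ref{lem:-is-a}, and dualizing a strict surjection $A^{n}\to M$ in (3)) only fill in details the paper leaves implicit.
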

\begin{proof}
We have an embedding $M\hookrightarrow B^{n}$ for some $n\in\mathbb{N}$.
Then $M^{\phi}=M\ccten B{}_{\phi}C\hookrightarrow B^{n}\ccten B{}_{\phi}C=\left(B\ccten B{}_{\phi}C\right)^{n}\cong C^{n}$,
which proves 1). 2) is obvious. 3) is similar to \cite{HF}.\end{proof}
\begin{defn}
A Banach $C$-$\cten$-comodule $M$ is called \emph{cofree} if it
is of the form $M\cong V\cten C$ for a Banach space $V$ with $\cten$-comodule
action $\rho=id_{V}\otimes\cm_{C}$.
\end{defn}
Cofree Banach $\cten$-comodules are cofree objects in the category
of right Banach $C$-$\cten$-comodules over the Banach space $V$
with the covering map $p:V\cten C\to V$ being $p=id_{V}\bar{\otimes}\epsilon_{C}$.
If $V$ is finite-dimensional, we have a $\cten$-comodule isomorphism
$V\cten C\cong C^{\dim_{K}V}$.

If $M$ is free finitely generated Banach $\cten$-module over a Banach
$\cten$-algebra $A$, then $\bhd M$ is cofree finitely cogenerated
over $\bhd D$ (similar to \cite{HF}).

As in the algebraic case, our induction and restriction functors are
related by Frobenius reciprocity.
\begin{prop}
\label{pro:(Frobenius-reciprocity)}(Frobenius reciprocity)

Let $\pi\in\BCAlg\left(C,B\right)$, be a morphism of K-Banach $\cten$-coalgebras,
$M\in\rbcom B$ and $N\in\rbcom C$.

There is a topological isomorphism 
\[
\rbcom C\left(N,M\ccten B{}_{\pi}C\right)\cong\rbcom B\left(N_{\pi},M\right).
\]
\end{prop}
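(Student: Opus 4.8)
The plan is to realize the asserted isomorphism as the adjunction between induction $(-)^{\pi}=-\ccten B{}_{\pi}C$ and restriction $(-)_{\pi}$, by writing down mutually inverse bounded $K$-linear maps
\[
\Phi:\rbcom C\left(N,M\ccten B{}_{\pi}C\right)\to\rbcom B\left(N_{\pi},M\right),\qquad\Psi:\rbcom B\left(N_{\pi},M\right)\to\rbcom C\left(N,M\ccten B{}_{\pi}C\right).
\]
Both sides are Banach spaces, being closed subspaces of the relevant $\Ban(-,-)$, so it suffices to produce linear maps bounded in both directions and check that the composites are the identities; since they will be mutually inverse bounded bijections, each is automatically a topological isomorphism and no open mapping argument is needed.

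For $\Phi$, given a $C$-comodule morphism $f\colon N\to M\ccten B{}_{\pi}C$, apply restriction to get the $B$-comodule morphism $f_{\pi}\colon N_{\pi}\to\left(M^{\pi}\right)_{\pi}$ and postcompose with the counit map $\id M\bten\ep_C$, which by Lemma \ref{lem:-is-a} is a morphism of $B$-comodules $\left(M^{\pi}\right)_{\pi}\to M$. Thus $\Phi(f):=\left(\id M\bten\ep_C\right)\circ f$, and since $\id M\bten\ep_C$ is a fixed operator of norm $\le1$ one has $\norm[\Phi(f)]\le\norm[f]$, so $\Phi$ is bounded. For $\Psi$, given a $B$-comodule morphism $g\colon N_{\pi}\to M$, set $\Psi(g):=\left(g\cten\id C\right)\circ\rho_{N}\colon N\to M\cten C$, which is bounded with $\norm[\Psi(g)]\le\norm[g]\,\norm[\rho_{N}]$ because $\norm[g\cten\id C]\le\norm[g]$.

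The central verification, and the step I expect to be the main obstacle, is that the image of $\Psi(g)$ actually lands in the closed subspace $M\ccten B{}_{\pi}C\subset M\cten C$ and that $\Psi(g)$ is then a morphism of right $C$-comodules. Both are Sweedler computations: writing $\rho_{N}(n)=n_{(0)}\ten n_{(1)}$, the cotensor condition $(\rho_{M}\cten\id C)\Psi(g)(n)=(\id M\cten\rho_{l\pi})\Psi(g)(n)$ follows by combining the hypothesis that $g$ intertwines the $B$-coactions (recall $\rho_{N_{\pi}}=(\id N\cten\pi)\circ\rho_{N}$ and $\rho_{l\pi}=(\pi\cten\id C)\circ\cm_C$) with coassociativity of $\rho_{N}$; the $C$-comodule morphism property uses coassociativity once more, since the right $C$-coaction on $M\ccten B{}_{\pi}C$ is the one induced by $\id M\cten\cm_C$. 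The only point of care beyond the purely algebraic case is that these "sums" are convergent series in completed tensor products, but every map involved ($\rho_{N}$, $\cm_C$, $\ep_C$, $g$, $\pi$) is continuous, so these are genuine identities of continuous maps, forced by the coassociativity and counit axioms.

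Finally I would check that $\Phi$ and $\Psi$ are mutually inverse. The composite $\Phi\circ\Psi$ sends $g$ to $\left(\id M\bten\ep_C\right)\circ\left(g\cten\id C\right)\circ\rho_{N}=g\circ\left(\id N\bten\ep_C\right)\circ\rho_{N}=g$, using the counit axiom $\left(\id N\bten\ep_C\right)\circ\rho_{N}=\id N$. For $\Psi\circ\Phi$, given $f$ one uses that $f$ is a $C$-comodule morphism, i.e. $(\id M\cten\cm_C)\circ f=(f\cten\id C)\circ\rho_{N}$; applying $\ep_C$ to the middle tensor factor and invoking counitality recovers $f$ from $\left(g\cten\id C\right)\circ\rho_{N}$ with $g=(\id M\bten\ep_C)\circ f$, so $\Psi(\Phi(f))=f$. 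Naturality in $M$ and $N$ is then a routine diagram chase, which completes the identification as a topological isomorphism.
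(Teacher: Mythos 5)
Your proposal is correct and takes essentially the same route as the paper: the paper's proof uses exactly the same pair of maps, $\phi\longmapsto\left(\id M\bten\ep_{C}\right)\circ\phi$ and $\psi\longmapsto\left(\psi\cten\id C\right)\circ\rho_{N}$, delegating to \cite[Prop.6]{DOI} the Sweedler-type verifications that you write out by hand, and it likewise deduces the topological isomorphism from continuity of the two mutually inverse bounded linear bijections rather than from any open mapping argument. One small correction: the counit $\ep_{C}$ need not have norm at most $1$, so your bound should read $\norm[\Phi\left(f\right)]\leq\norm[\ep_{C}]\,\norm[f]$; since boundedness is all that is needed, nothing else changes.
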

\begin{proof}
The morphisms of $\cten$-comodules 
\[
\begin{array}{ccccc}
 & \phi & \longmapsto & \left(\id M\bar{\otimes}\epsilon_{C}\right)\circ\phi & =\tilde{\phi}=\left(\id M\bar{\otimes}\left(\epsilon_{B}\circ\pi\right)\right)\circ\phi\\
\left(\psi\otimes\id C\right)\circ\rho_{N}= & \bar{\psi} & \longleftarrow & \left(\psi:N\to M\right)
\end{array}.
\]
are inverse to each other \cite[Prop.6]{DOI}.

Since the topology on spaces $\rbcom C\left(N,M\ccten B{}_{\pi}C\right)$
and $\rbcom B\left(N_{\pi},M\right)$ is induced from $L_{b}\left(N,M\cten C\right)$
and $L_{b}\left(N,M\right),$ the continuity of our linear bijections
follows from the argument same as in \cite[sec.18]{nfa}. Namely,
composition with linear continuous map $W\to U$ is a linear continuous
map $L_{b}\left(V,W\right)\to L_{b}\left(V,U\right).$ Since our $\left(\tilde{\cdot}\right)$
maps are compositions of continuous maps, they are continuous.
\end{proof}

\subsection{\label{sub:Rational-modules}Rational $\cten$-modules}

Let $C\in\BCAlg$ be a Banach $\cten$-coalgebra and $M$ be a left
Banach $\cten$-module over $C'.$ Every element $m\in M$ defines
a map from $C'$ to $M$ through the cyclic $\cten$-module, generated
by $m.$ Following \cite[2.1]{sw}, taken altogether for all $m\in M$,
this defines a continuous map 
\begin{equation}
\begin{array}{ccccc}
\rho: & M & \to & \Ban\left(C',M\right)\\
 & m & \mapsto & \rho\left(m\right): & \rho\left(m\right)\left(c'\right)=c'm
\end{array}.\label{eq:ratmod}
\end{equation}
There is a natural embedding
\[
\begin{array}{cccc}
M\cten C & \hookrightarrow & \Ban\left(C',M\right)\\
m\otimes c & \longmapsto & f\left(m\otimes c\right): & f\left(m\otimes c\right)\left(c'\right)=c'\left(c\right)\cdot m
\end{array}.
\]

Let $M\in\rbcom C$ and consider the induced $C'-$$\cten$-module
structure on $M$. In this case $c'\cdot m=\left(\id M\bten c'\right)\circ\rho_{M}\left(m\right).$
Then our map $\rho:M\to\Ban\left(C',M\right)$ acts on $m$ exactly
as $\rho_{M}$ (since the results coincide on every element of $C'$),
and thus $\rho\left(m\right)\in M\cten C.$
\begin{defn}
In the notations above
\begin{itemize}
\item $M$ is called t-rational if $\rho\left(M\right)\subset M\cten C$;
\item $M$ is called rational if $\rho\left(M\right)\subset M\otimes C$.
\end{itemize}
\end{defn}
In both cases $\rho\left(M\right)$ satisfy axioms of a $\cten$-comodule
action.

In other words, a $\cten$-module is t-rational if its $\cten$-module
structure is induced from a Banach $\cten$-comodule structure and
rational if it is induced from a $\ten$-comodule.
\begin{rem}
Since for any Banach spaces $M$ and $C$ by \cite[18.11]{nfa} we
have the inclusion $M\cten C\subset M\cten C''\cong C\left(C',M\right)$.
Thus if $M$ is t-rational (rational) then $\rho\left(M\right)\subset C\left(C',M\right),$i.e.
$\rho\left(m\right)$ is a compact (finite rank) map for every $m\in M$.\end{rem}
\begin{prop}
\cite[2.1.3]{sw} Let L, M, N $\in C'-mod$ and M, N being t-rational
(rational).
\begin{enumerate}
\item if $N\subset M$ is a $\cten$-subcomodule \emph{(}$\ten$-subcomodule\emph{)},
then N is also a $\cten$-submodule;
\item every cyclic $\cten$-submodule is countable type \emph{(}finite dimensional\emph{);}
\item any quotient of a t-rational \emph{(}rational\emph{)} module is t-rational
\emph{(}rational\emph{);}
\item L has a unique maximal rational submodule 
\[
L^{rat}=\sum\mbox{"all rational submodules"}=\rho_{L}^{-1}\left(L\otimes C\right)
\]
 and a unique maximal t-rational $\cten$-submodule 
\[
L^{t-rat}=\widehat{\sum}\mbox{"all t-rational \ensuremath{\cten}-submodules"}=\rho_{L}^{-1}\left(L\cten C\right);
\]

\item any homomorphism of t-rational \emph{(}rational\emph{)} modules is
a $\cten$-comodule \emph{(}$\ten$-comodule\emph{)} morphism.
\end{enumerate}
\end{prop}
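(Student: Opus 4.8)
The plan is to transcribe Sweedler's proof of \cite[2.1.3]{sw} into the Banach setting, the operative dictionary being that on a t-rational (resp.\ rational) module the rationality map $\rho$ of \eqref{eq:ratmod} is a genuine $\cten$-comodule (resp.\ $\ten$-comodule) coaction, and that the induced $\bd C$-module structure $c'\cdot m=\left(\id M\bten c'\right)\circ\rho\left(m\right)$ recovers the original one. Throughout I would exploit that, since $K$ is discretely valued, $C$ is free and carries an orthogonal basis $\left\{ e_{i}\right\} $ with coordinate functionals $e_{i}'\in\bd C$; this replaces the classical device of choosing functionals dual to a finite linearly independent family. The second standing tool is that the natural embedding $M\cten C\emb\Ban\left(\bd C,M\right)$ is injective, so that an identity in $M\cten C$ may be verified by testing against all of $\bd C$.

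Parts (1), (2) and (5) are the routine ones. For (1), if $\rho_{M}\left(N\right)\subset N\cten C$ then for $n\in N$ and $c'\in\bd C$ we have $c'\cdot n=\left(\id M\bten c'\right)\circ\rho_{M}\left(n\right)\in N$, since $\id M\bten c'$ and $\id N\bten c'$ agree on $N\cten C$; thus $N$ is a $\bd C$-submodule. Part (2) follows in the t-rational case directly from the proposition above (\cite[I.1]{bd2}), which states that the closure of $\bd Cx$ is a closed $\cten$-subcomodule of countable type; in the rational case $\rho\left(x\right)=\sum_{i=1}^{k}x_{i}\ten c_{i}\in M\ten C$ is finite, so $c'\cdot x=\sum_{i}c'\left(c_{i}\right)x_{i}$ lies in the finite-dimensional span of the $x_{i}$. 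For (5), given a $\bd C$-module map $f:M\to N$ and any $c'\in\bd C$, applying $\id N\bten c'$ to $\left(f\cten\id C\right)\circ\rho_{M}\left(m\right)$ yields $f\left(c'\cdot m\right)=c'\cdot f\left(m\right)$, which is the value of $\id N\bten c'$ on $\rho_{N}\circ f\left(m\right)$; as these two elements of $N\cten C$ agree on every $c'\in\bd C$ and the embedding into $\Ban\left(\bd C,N\right)$ is injective, they coincide, so $f$ is a $\cten$-comodule morphism.

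The content lies in (3) and (4). For (3) I first establish the converse of (1) for t-rational $M$: every closed $\bd C$-submodule $N$ is a $\cten$-subcomodule. Expanding $\rho_{M}\left(n\right)=\sum_{i}n_{i}\ten e_{i}$ in the orthogonal basis of $C$ (so $n_{i}\to0$), each coordinate $n_{i}=\left(\id M\bten e_{i}'\right)\rho_{M}\left(n\right)=e_{i}'\cdot n$ lies in $N$ because $N$ is a submodule; as $N$ is closed and $N\cten C$ is closed in $M\cten C$, the convergent sum $\sum_{i}n_{i}\ten e_{i}$ lies in $N\cten C$, giving $\rho_{M}\left(N\right)\subset N\cten C$. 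Then $M/N$ is a $\cten$-comodule by \cite[2.0.1]{sw}, and one checks that its quotient coaction induces precisely the quotient $\bd C$-module structure, so $M/N$ is t-rational; the rational case is the classical finite argument verbatim.

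For (4), set $L^{rat}=\rho_{L}^{-1}\left(L\ten C\right)$ and $L^{t-rat}=\rho_{L}^{-1}\left(L\cten C\right)$; the latter is closed because $\rho_{L}$ is continuous and $L\cten C$ is closed in $\Ban\left(\bd C,L\right)$. To see $L^{t-rat}$ is a submodule, a short computation with the convolution formula $d'\star c'=\left(d'\bten c'\right)\circ\cm_{C}$ gives
\[
\rho_{L}\left(c'\cdot m\right)=\left(\id{L\cten C}\bten c'\right)\circ\left(\id L\ten\cm_{C}\right)\circ\rho_{L}\left(m\right),
\]
which lies in $L\cten C$ whenever $\rho_{L}\left(m\right)\in L\cten C$, since then $\left(\id L\ten\cm_{C}\right)\rho_{L}\left(m\right)\in L\cten C\cten C$. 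The coordinate extraction of (3), now performed inside the submodule $L^{t-rat}$, upgrades this to $\rho_{L}\left(L^{t-rat}\right)\subset L^{t-rat}\cten C$, so $L^{t-rat}$ is itself t-rational. Maximality is formal: any t-rational subcomodule $P$ satisfies $\rho_{L}\left(P\right)\subset P\cten C\subset L\cten C$, so $P\subset L^{t-rat}$, whence $L^{t-rat}$ is the completed sum $\widehat{\sum}$ of all t-rational $\cten$-subcomodules; the rational statement with $\ten$ is the classical one. I expect the main obstacle to be purely functional-analytic: guaranteeing convergence of the Sweedler-type sums inside $L\cten C$ and the closedness of $N\cten C$ and $L\cten C$ that lets one pass from ``all coordinates lie in the subspace'' to ``the element lies in the completed tensor product,'' for which the freeness of $C$ over the discretely valued $K$ and its orthogonal coordinates are the essential input.
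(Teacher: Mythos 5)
Your proposal is correct and is essentially the paper's own approach: the paper proves this proposition simply by citation to \cite[2.1.3]{sw}, i.e.\ by adapting Sweedler's classical argument, and your write-up carries out exactly that adaptation, supplying the functional-analytic inputs (orthogonal bases of the free Banach space $C$ over the discretely valued $K$, closedness of $N\cten C$ in $M\cten C$, and injectivity of $M\cten C\emb\Ban\left(\bd C,M\right)$) that the paper leaves implicit. The only blemish is terminological: in the maximality argument for $L^{t-rat}$ you should quantify over t-rational $\cten$-submodules rather than subcomodules, though by your converse to (1) the two notions coincide for closed subspaces, so the argument is unaffected.
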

\begin{cor}
Any finitely generated rational module is finite dimensional.
\end{cor}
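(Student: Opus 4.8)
The plan is to reduce the statement to the cyclic case and then quote part (2) of the preceding proposition directly. Suppose $M$ is a rational $\bd C$-module generated by finitely many elements $m_{1},\dots,m_{n}$. By definition of finite generation, $M$ is the sum of the cyclic submodules they generate, i.e. $M=\sum_{i=1}^{n}\bd C m_{i}$, where $\bd C m_{i}=\{c'm_{i}\mid c'\in\bd C\}$ is the cyclic $\cten$-submodule cogenerated by $m_{i}$.

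The key observation is that part (2) of the proposition immediately preceding this corollary applies to a rational module and its cyclic submodules. Since $M$ is rational, each $\bd C m_{i}$ is a cyclic $\cten$-submodule of a rational module, and in the rational case that statement asserts precisely that every such cyclic submodule is finite dimensional. Thus each summand $\bd C m_{i}$ is a finite-dimensional subspace of $M$. A finite sum of finite-dimensional subspaces is again finite dimensional, whence $M=\sum_{i=1}^{n}\bd C m_{i}$ is finite dimensional, as claimed. (In particular, being finite dimensional, $M$ is automatically closed, so no distinction between algebraic and topological finite generation arises: even if finite generation is interpreted as $M=\overline{\sum_{i=1}^{n}\bd C m_{i}}$, the closure is superfluous once the summands are known to be finite dimensional.)

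There is essentially no serious obstacle here; the corollary is a direct consequence of the finite-dimensionality of cyclic submodules in the rational setting. The only point requiring a moment's care is to keep track of the distinction between the rational and t-rational cases in part (2): for a t-rational module the cyclic submodules are only guaranteed to be of countable type, and the argument would then yield merely that a finitely generated t-rational module is of countable type, not finite dimensional. It is exactly the stronger rational hypothesis, under which $\rho(M)\subset M\ten C$ lands in the algebraic tensor product, that forces finite-dimensionality of the cyclic pieces and hence of $M$.
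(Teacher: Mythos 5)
Your proof is correct and follows exactly the route the paper intends: the corollary is stated as an immediate consequence of part (2) of the preceding proposition, and your decomposition of $M$ into a finite sum of cyclic submodules, each finite dimensional by the rational case of that statement, is precisely that argument. Your remarks on the superfluity of the topological closure and on the contrast with the t-rational (countable type) case are accurate and welcome clarifications.
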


\subsection{Tensor identities}
\begin{prop}
\label{pro:(Tensor-identities}\emph{(}Tensor identities\emph{)} Let
$\pi\in\mbox{BHopf}_{K}\left(H,B\right)$, $W\in\rbcom B$ and $V\in\rbcom H$.
Then we have the following isomorphisms of Banach $H$-$\cten$-comodules.:
\begin{enumerate}
\item $V\cten\left(W\ccten B{}_{\pi}H\right)\cong\left(V_{\pi}\cten W\right)\ccten B{}_{\pi}H$ 
\item $\left(W\ccten B{}_{\pi}H\right)\cten V\cong\left(W\cten V_{\pi}\right)\ccten B{}_{\pi}H$
\end{enumerate}
\end{prop}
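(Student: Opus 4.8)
The plan is to construct the isomorphism of statement (1) explicitly as a map of Banach spaces, then verify it respects both the cotensor structure and the residual $H$-comodule structure. The key observation is that both sides sit inside complete tensor products, and the natural candidate map is built from the coaction $\rho_V : V \to V \cten H$ together with the identity on $W \ccten B {}_\pi H$. The plan is to define
\[
\Phi : V \cten \bigl(W \ccten B {}_\pi H\bigr) \longrightarrow \bigl(V_\pi \cten W\bigr) \ccten B {}_\pi H
\]
by sending $v \ten (w \ten h)$ to $\left(v_{(0)} \ten w\right) \ten v_{(1)} h$ (in Sweedler notation for $\rho_V(v) = v_{(0)} \ten v_{(1)}$ and using the multiplication $m_H$ to combine $v_{(1)}$ with $h$), and to construct an inverse using the antipode $S$ of $H$, namely $(v \ten w) \ten h \mapsto v_{(0)} \ten \bigl(w \ten S(v_{(1)})h\bigr)$. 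The antipode is available precisely because we are working with Hopf $\cten$-algebras (the hypothesis $\pi \in \mbox{BHopf}_K(H,B)$), and this is the standard mechanism by which the tensor identity is proved in the algebraic and quantum cases.

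First I would check that $\Phi$ lands in the cotensor product, i.e. that the image satisfies the equalizer condition defining $\ccten B {}_\pi H$; this amounts to a diagram chase using coassociativity of $\cm_H$, the fact that $\pi$ is a coalgebra morphism, and the compatibility of the coaction $\rho_V$ with $\rho_W$. The coaction on the target is the one induced on $W \ccten B {}_\pi H$ as a right $H$-comodule (via $\rho_{r\pi}$ on the $H$-factor, as in the induction construction), tensored appropriately with the $H$-comodule $V$; here I must use that the tensor product of $\cten$-comodules over the bialgebra $H$ carries the coaction $\rho_{M \cten N}$ defined at the end of Section 2.2, involving $m_H$ and the flip $\tau$. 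Next I would verify that $\Phi$ and its proposed inverse are mutually inverse purely algebraically on the dense algebraic tensor product, using the antipode axiom $S \star \id{} = u_H \circ \ep_H$ together with counitality; since both maps are built from continuous operations ($\rho_V$, $m_H$, $S$, and the comodule structure maps, all of which are bounded), they extend to continuous maps on the completions and the bijection persists.

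The main obstacle I expect is \emph{not} the algebraic verification, which is formally identical to the classical Hopf-algebraic computation, but rather the topological bookkeeping: one must confirm that all the maps involved are bounded for the cross-norms $\norm_{V \cten H}$ etc., that the cotensor product (a kernel, hence a closed subspace) is genuinely preserved, and that $\Phi$ is a topological isomorphism rather than merely a continuous bijection. For the last point I would invoke that $\Phi$ is a continuous linear bijection of $K$-Banach spaces whose inverse is also visibly continuous (being assembled from the same bounded building blocks), so no open mapping theorem is even needed. A secondary subtlety is that $V$ appears as $V_\pi$ (restricted to a $B$-comodule) inside the cotensor product on the right, so I must track carefully which coaction is used where — the restriction functor $(-)_\pi$ changes the coaction by composing with $\pi$, and the equalizer condition must be checked against this restricted coaction on the $V_\pi \cten W$ factor.

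Finally, statement (2) is entirely symmetric: I would prove it by the same construction with the roles of the two tensor factors exchanged, using the opposite-side coaction and the flip $\tau$, or simply remark that it follows from (1) by applying the symmetry of the monoidal structure $\cten$ on $\Ban$ together with the compatibility of $\tau$ with the comodule structures. Thus the proof reduces to writing down $\Phi$ and its antipode-built inverse, checking the equalizer and coaction-compatibility conditions by Sweedler-notation diagram chases, and noting continuity of every constituent map.
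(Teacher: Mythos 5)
Your proposal is correct and follows essentially the same route as the paper: the paper defines exactly the same pair of maps $\phi\left(v\otimes w\otimes h\right)=v_{(0)}\otimes w\otimes v_{(1)}h$ and $\psi\left(v\otimes w\otimes h\right)=v_{(0)}\otimes w\otimes S_{H}\left(v_{(1)}\right)h$, cites Doi for the algebraic fact that they are mutually inverse comodule morphisms (the Sweedler-notation verification you propose to carry out by hand), and concludes continuity from the continuity of the constituent maps, treating (2) symmetrically. The only cosmetic difference is that you spell out the equalizer and coaction checks that the paper delegates to the reference \cite{DOI}.
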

\begin{proof}
The maps 
\[
\begin{array}{cccc}
\phi: & V\cten\left(W\ccten B{}_{\pi}H\right) & \to & \left(V_{\pi}\cten W\right)\ccten B{}_{\pi}H\\
 & \sum v\otimes w\otimes h & \longmapsto & \sum v_{(0)}\otimes w\otimes v_{(1)}h
\end{array}.
\]
and 
\[
\begin{array}{cccc}
\psi: & \left(V_{\pi}\cten W\right)\ccten B{}_{\pi}H & \longrightarrow & V\cten\left(W\ccten B{}_{\pi}H\right)\\
 & \sum v\otimes w\otimes h & \longmapsto & \sum v_{(0)}\otimes w\otimes S_{H}\left(v_{(1)}\right)h
\end{array}.
\]
are morphisms of right Banach $H$-$\cten$-comodules that are inverse
to each other\cite{DOI}. Since all involved maps are continuous,
$\phi$ and $\psi$ are continuous.

The proof of (2) is similar.\end{proof}
\begin{cor}
\label{cor:triv}If $B=K,$ $\pi=\epsilon_{H}$ and $W=K$ then we
have a $\cten$-comodule isomorphism 
\[
V\cten H\cong V_{tr}\cten H,
\]
where $V_{tr}$ means the underlying vector space of V with trivial
$H-\cten$-comodule structure \emph{(}$\rho_{V_{tr}}\left(v\right)=v\ten1$\emph{)}.
\end{cor}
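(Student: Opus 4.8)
The plan is to read off the statement from part (1) of the preceding Proposition (tensor identities) after specializing the data. First I would observe that $\epsilon_{H}\colon H\to K$ is a morphism of Banach Hopf $\cten$-algebras onto the trivial Hopf $\cten$-algebra $K$, so the proposition applies with $\pi=\epsilon_{H}$, $B=K$, $W=K\in\rbcom K$ and the given $V\in\rbcom H$. This yields a topological isomorphism of $H$-$\cten$-comodules
\[
V\cten\left(K\ccten K{}_{\epsilon_{H}}H\right)\cong\left(V_{\epsilon_{H}}\cten K\right)\ccten K{}_{\epsilon_{H}}H,
\]
and the whole task is to identify each side with the corresponding side of the corollary.

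For the left-hand side I would compute the induced comodule $K\ccten K{}_{\epsilon_{H}}H$. The left $K$-coaction on ${}_{\epsilon_{H}}H$ is $\rho_{l\epsilon_{H}}=\left(\epsilon_{H}\ten\id H\right)\circ\cm_{H}$, which under the canonical identification $K\cten H\cong H$ is the identity by the counit axiom; likewise the right $K$-coaction on $K$ is the identity under $K\cten K\cong K$. Hence, by the left-handed version of the Corollary $M\ccten BB\cong M$ (cotensoring the coalgebra $K$ with a left $K$-comodule returns that comodule), one gets $K\ccten K{}_{\epsilon_{H}}H\cong{}_{\epsilon_{H}}H$ as right $H$-$\cten$-comodules, the surviving right $H$-coaction being the regular one $\cm_{H}$. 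Thus the left-hand side becomes $V\cten H$ with the diagonal tensor-product coaction, which is exactly the left-hand side of the corollary.

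For the right-hand side I would first note that the restriction $V_{\epsilon_{H}}$ carries the $K$-coaction $\left(\id V\bten\epsilon_{H}\right)\circ\rho_{V}=\id V$ (again the counit axiom), i.e. it is $V$ with the trivial $K$-comodule structure, and $V_{\epsilon_{H}}\cten K\cong V_{\epsilon_{H}}$ under $\cten K\cong\id{}$. Cotensoring over $K$ with ${}_{\epsilon_{H}}H$ again collapses, since both $K$-coactions are identities, so as a space $\left(V_{\epsilon_{H}}\cten K\right)\ccten K{}_{\epsilon_{H}}H\cong V\cten H$, while its right $H$-coaction is inherited solely from the regular right $H$-comodule structure of ${}_{\epsilon_{H}}H$, namely $\id V\ten\cm_{H}$. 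Plugging the trivial coaction $\rho_{V_{tr}}\left(v\right)=v\ten1$ into the definition of the tensor-product comodule coaction produces precisely $\id V\ten\cm_{H}$ for $V_{tr}\cten H$, so the right-hand side is $V_{tr}\cten H$ as an $H$-$\cten$-comodule. Combining the two identifications gives $V\cten H\cong V_{tr}\cten H$.

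The computations are all routine; the only point demanding care---and the main (minor) obstacle---is the bookkeeping of the comodule structures through the two collapses of the cotensor product, in particular checking that the $H$-coaction surviving on the right-hand side is $\id V\ten\cm_{H}$ and that this agrees with the tensor-product coaction attached to $V_{tr}$ rather than with the diagonal coaction attached to $V$.
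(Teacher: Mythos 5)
Your proof is correct and follows the paper's own route: the corollary is simply Proposition \ref{pro:(Tensor-identities}(1) specialized to $B=K$, $\pi=\epsilon_{H}$, $W=K$, with both cotensor products over $K$ collapsing via the counit axiom. The paper leaves these identifications implicit, and your bookkeeping of the coactions (in particular that the coaction surviving on the right-hand side is $\id V\ten\cm_{H}$, which agrees with the tensor-product coaction attached to $V_{tr}\cten H$) is exactly the routine verification required.
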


\section{Locally convex $\cten$-coalgebras}

\subsection*{Preliminaries}

\subsubsection*{Basic notions}

For the background on nonarchimedean functional analysis we refer
mostly to \cite{nfa} and \cite{PGSch}.

Let \emph{V} be Topological Vector Space (TVS) over $K$.

A subset of $V$ is called \emph{absolutely convex} if it is an $K^{0}$-submodule
of $V$.

A subset of $V$ is called \emph{convex} if it is of the form $x+A$,
$x\in V$ and $A$ is an absolutely convex set.

An absolutely convex subset $L\subset V$ is called a \emph{absorbing}
if $\forall x\in V$ $\exists\lambda\in K^{*}$ s.t. $x\in\lambda L$.

A \emph{lattice} is an absolutely convex absorbing set.

A TVS is Locally Convex (LCTVS) if its topology is generated by lattices.
We use LCTVS both as abbreviation and as the notation for the corresponding
category.

The strong dual of an LCTVS $V$ is denoted by $\sd V$.

\subsubsection*{Inductive and projective limits}

Let $\mathcal{F}=\left\{ f_{h}:V_{h}\to V\right\} _{h\in H}$ be a
family of maps of LCTVSs. A \emph{final locally convex topology} $\tau_{fin}\left(\mathcal{F}\right)$
on $V$ w.r.t. $\mathcal{F}$ is the strongest locally convex topology
that makes all $f_{h}\in\mathcal{F}$ continuous. Such a topology
always exists.

Let $\left(V_{n},\phi_{nm}\right)_{n\in\mathbb{N}}$ be an inductive
sequence of LCTVSs. A locally convex inductive limit of $\left(V_{n},\phi_{nm}\right)_{n\in\mathbb{N}}$
is its algebraic inductive limit $V=\ilim V_{n}$, equipped with final
locally convex topology w.r.t. $\phi_{n}:V_{n}\to V$. Any family
of morphisms $f_{n}:V_{n}\to U$ on $\left(V_{n},\phi_{nm}\right)_{n\in\mathbb{N}}$
in LCTVS, compatible with $\phi_{nm}$, factors through $V$.

$V$ can also be described as a quotient of the locally convex direct
sum $\bigoplus V_{n}$ by the subspace $D$, generated by vectors
$\left\{ \left(\dots,v_{n},-\phi_{n,n+1}\left(v_{n}\right),\dots\right)|v_{n}\in V_{n}\right\} $.
We denote by $\pi:\bigoplus V_{n}\to V\simeq\bigoplus V_{n}/D$ the
quotient map. The universal property of $V$ follows from the universal
property of $\bigoplus V_{n}$ .

A map $f:V\to U$ in LCTVS is continuous iff maps $f\circ\phi_{n}$
are continuous. Taking $f=\id V$ one get that subset $F\subset V$
is open (closed) if and only if $\phi_{n}^{-1}\left(F\right)$ is
open (closed) \cite[1.1.6]{GPKS} for all $n$.

Similarly one can define locally convex projective limits. Let $\mathcal{F}=\left\{ f_{h}:V\to V_{h}\right\} _{h\in H}$
be a family of maps of LCTVSs. An \emph{initial locally convex topology}
$\tau_{in}\left(\mathcal{F}\right)$ on $V$ w.r.t. $\mathcal{F}$
is the weakest locally convex topology that makes all $f_{h}\in\mathcal{F}$
continuous. 

Let $\left(V_{n},\phi_{nm}\right)_{n\in\mathbb{N}}$ be a projective
sequence of LCTVSs. A locally convex projective limit of $\left(V_{n},\phi_{nm}\right)_{n\in\mathbb{N}}$
is its algebraic projective limit $V=\plim V_{n}$, equipped with
final locally convex topology w.r.t. $\phi_{n}:V\to V_{n}$. $V$
can be described as a subspace of the locally convex direct product
$\prod V_{n}$, consisting of vectors $\left\{ \left(\dots,v_{n},v_{n+1},\dots\right),v_{n}\in V_{n}|\phi_{n+1,n}\left(v_{n+1}\right)=v_{n}\right\} $.

\smallskip{}

\subsubsection*{Limits of sequences of spaces}
\begin{defn}
Let \emph{V} be an LCTVS.
\begin{itemize}
\item \emph{V} is a \emph{LB-space}, if it is an locally convex inductive
limit of a sequence $\left(V_{n},\phi_{nm}\right)$ of Banach spaces.
\item \emph{V} is a \emph{Fréchet} if it is a locally convex projective
limit of a sequence $\left(V_{n},\phi_{nm}\right)$ of Banach spaces.
\item V is an \emph{LS-space} if it is an LB-space and the transition maps
$\phi_{n}$ are compact. 
\item V is an \emph{FS (Fréchet-Schwartz) space}, if it is a Fréchet space
and the transition maps $\phi_{n}$ are compact. 
\end{itemize}
\end{defn}
In another terminology, LS-spaces are called compact type (CT) spaces,
and FS-spaces are called nuclear Fréchet (NF) spaces.
\begin{prop}
Any LS-space $V$ is Hausdorff, complete and reflexive \cite[Theorem 1.1]{ST4}.

Every Fréchet space is Hausdorff and complete \cite[Proposition 8.2]{nfa}.
FS-spaces are also reflexive.

A closed subspace of an LS (FS) space is an LS (FS) space. The quotient
of an LS (FS) space by a closed subspace is an LS (FS) space \cite[1.2]{ST4}.

The strong dual of an LS-space is an FS-space and vice verse \cite[1.3]{ST4}.
Thus the categories of LS and FS spaces are antiequivalent.
\end{prop}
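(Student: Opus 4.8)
The plan is to read this Proposition as a bundling of the cited functional-analytic inputs together with one formal categorical deduction, the deduction being the content of the last clause. The Hausdorff, completeness and reflexivity assertions for LS-spaces are quoted verbatim from \cite[Theorem 1.1]{ST4}, the Hausdorffness and completeness of Fr\'echet spaces from \cite[Proposition 8.2]{nfa}, and the stability of the LS- and FS-classes under closed subspaces and quotients from \cite[1.2]{ST4}; for these there is nothing to do beyond attribution. The one item in the first half not covered by a direct citation is the reflexivity of FS-spaces, which I would take as a standard fact of nonarchimedean analysis (nuclear Fr\'echet spaces are reflexive); it can also be extracted from the duality below, since for an FS-space $W$ the strong dual $\sd W$ is LS and hence reflexive, and the triangle identity $\sd{\eta_W}\circ\eta_{\sd W}=\id{\sd W}$ then exhibits the transpose of $\eta_W$ as an isomorphism.

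The substantive assertion is the antiequivalence of the categories of LS- and FS-spaces, and I would realize it through the strong dual functor $\sd{(-)}$. First I would record that $\sd{(-)}$ is contravariantly functorial on LCTVS: a continuous linear map $f\colon V\to W$ transposes to $\sd f\colon\sd W\to\sd V$, continuous for the strong topologies, and transposition preserves identities and reverses composites. By \cite[1.3]{ST4} this functor sends LS-spaces to FS-spaces and FS-spaces to LS-spaces, so it restricts to a pair of contravariant functors between the two categories, one in each direction.

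Next I would assemble the natural isomorphism collapsing the iterated dual. For every LS-space, and for every FS-space, the evaluation map $\eta_V\colon V\to\sd{\sd V}$ is a topological isomorphism by the reflexivity statements already in hand. The point requiring care is naturality: for $f\colon V\to W$ one must check $\sd{\sd f}\circ\eta_V=\eta_W\circ f$, which follows by unwinding the pairing, since both composites send $v$ to the functional $\phi\mapsto\phi(fv)$ on $\sd W$. Thus $\sd{\sd{(-)}}\cong\Id$ naturally on each of the two categories, and $\sd{(-)}$ is therefore a contravariant equivalence, i.e.\ the categories of LS- and FS-spaces are antiequivalent.

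I expect the genuine obstacle to lie not in the categorical bookkeeping but in the duality-theoretic inputs on which it rests: that the transpose of a continuous map between spaces of these types is again continuous for the strong dual topologies, and that the strong topology on $\sd{\sd V}$ is precisely the one making $\eta_V$ a homeomorphism. Both are consequences of the good duality behaviour of LS- and FS-spaces recorded in \cite{ST4} (reflexivity together with the control of bounded sets afforded by the inductive/projective limit descriptions). Granting these, the naturality check and the collapse of the double dual are routine manipulations of evaluation pairings and need no further ingredients.
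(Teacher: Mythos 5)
Your proposal is correct and matches the paper's approach: the paper offers no proof beyond the inline citations, treating the proposition as a compilation of known facts from \cite{ST4} and \cite{nfa}, with the final antiequivalence (and the reflexivity of FS-spaces) left as an immediate consequence of strong duality and reflexivity — exactly the deduction you spell out via the double-dual natural isomorphism.
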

The following version of the Open Mapping theorem follows from the
general theorem of De Wilde \cite[24.30]{MEI}.
\begin{thm}
\label{thm:(Open-Mapping-theorem)}(Open Mapping theorem) Let $f:E\to U$
be a continuous surjective map of LCTVS $E$ and $U$. If $E$ is
either LB- or Fréchet space, or their closed subspace or quotient,
and if U is an LB- or Fréchet space (in particular, Banach), then
$f$ is an open map.
\end{thm}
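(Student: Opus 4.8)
The plan is to reduce the statement to De Wilde's closed graph theorem, of which the cited \cite[24.30]{MEI} is the open-mapping companion, by the standard quotient trick, and then to verify the two structural hypotheses case by case. Concretely, since $f$ is continuous and $U$ is Hausdorff, $\Ker f$ is closed, so the quotient $E/\Ker f$ is a Hausdorff LCTVS and $f$ factors as $f=\bar f\circ\pi$, where $\pi:E\to E/\Ker f$ is the (open) quotient map and $\bar f:E/\Ker f\to U$ is a continuous linear bijection. Because $\pi$ is open, $f$ is open if and only if $\bar f$ is a topological isomorphism, i.e. if and only if its inverse $\bar f^{-1}:U\to E/\Ker f$ is continuous. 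The graph of $\bar f^{-1}$ is the graph of $\bar f$ with its factors interchanged, hence closed since $\bar f$ is continuous and both spaces are Hausdorff. So everything comes down to applying the closed graph theorem to $\bar f^{-1}$, which asks that the source $U$ be ultrabornological and the target $E/\Ker f$ be webbed.

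First I would check that $E/\Ker f$ is webbed. Banach spaces are webbed; a Fr\'echet space is webbed via the web coming from a fundamental decreasing sequence of defining lattices; and an LB-space, being a countable locally convex inductive limit of Banach spaces, is webbed because the class of webbed spaces is stable under countable locally convex inductive limits. Moreover this class is stable under passage to closed subspaces and to Hausdorff quotients, so every closed subspace or quotient of an LB- or Fr\'echet space is again webbed. In particular $E$ is webbed in all the cases permitted by the hypothesis, and since a quotient of a webbed space is webbed, so is $E/\Ker f$.

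Next I would check that $U$ is ultrabornological. A Banach space is Baire, hence barrelled and bornological, and therefore ultrabornological; a Fr\'echet space is likewise Baire, being complete and metrizable (cf. \cite[Proposition 8.2]{nfa}), and hence ultrabornological; and an LB-space is ultrabornological essentially by definition, being a locally convex inductive limit of Banach spaces. Thus $U$ is ultrabornological in each of the allowed cases. The closed graph theorem now applies to $\bar f^{-1}:U\to E/\Ker f$, giving its continuity, whence $\bar f$ is a topological isomorphism and $f=\bar f\circ\pi$ is open.

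The logical skeleton above is routine; the real work---and the reason the statement is attributed to De Wilde rather than proved from scratch---is ensuring that the webbed-space machinery is available in the nonarchimedean setting over $K$: namely the existence of webs on Banach, Fr\'echet and LB-spaces, the three stability properties (closed subspaces, Hausdorff quotients, countable inductive limits), and the closed graph / open mapping theorem itself. These are the nonarchimedean analogues of the classical theory supplied by \cite[24.30]{MEI}, and the one point I would verify carefully is that the polarity (absolute convexity) conventions used there are compatible with those fixed in this paper, so that \emph{webbed} and \emph{ultrabornological} mean exactly what the applications in the next sections require.
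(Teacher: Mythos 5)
Your proposal is correct and is essentially the paper's approach: the paper proves the theorem by directly invoking De Wilde's open mapping theorem \cite[24.30]{MEI}, which rests on exactly the two structural verifications you carry out (that $E$, hence any of its closed subspaces or quotients, is webbed, and that $U$ is ultrabornological), together with the caveat---which the paper also makes in its appendix---that the nonarchimedean analogues of the Chapter 24 machinery of \cite{MEI} hold. Your additional quotient-trick reduction to the closed graph theorem is just the standard internal derivation of the open-mapping form from the closed-graph form, so it adds a step the paper's citation already subsumes rather than a genuinely different argument.
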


\subsubsection*{Tensor products.}

The category of LCTVS has several natural topologies on the tensor
product of two LCTVS, most commonly used are the inductive $\otimes_{i,K}$,
injective $\otimes_{\epsilon,K}$ and projective $\otimes_{\pi,K}$
tensor product topologies. Over discretely valued fields, injective
and projective tensor products coincide for all LCTVS \cite[10.2]{PGSch}.
For Fréchet or sequentially complete LB spaces (and possibly in more
general cases too) inductive and projective tensor products coincide
\cite[17.6]{nfa},\cite[1.1.31]{EM}. Thus in our cases of interest
(LS and FS spaces) we have only one reasonable tensor product topology,
which after completion give rise to the completed tensor product $\cten$.

The categories of LS and FS spaces are tensor categories with respect
to the $\cten$. 
\begin{prop}
\label{prop:topten}Let $V\cong\ilim V_{n}$, $U\cong\ilim U_{n}$
be LS-spaces, $F=\plim F_{n}$ , $H=\plim H_{n}$ be FS-spaces and
$W$ be a Banach space.
\begin{enumerate}
\item $V\cten U=\ilim V_{n}\cten U_{n}$ \cite[1.1.32]{EM};
\item $F\cten H=\plim F_{n}\cten H_{n}$ \cite[1.1.29]{EM};
\item $\sd{V\cten U}=\sd V\cten\sd U$, $\sd{F\cten H}=\sd F\cten\sd H$
\cite[20.13, 20.14]{nfa};
\item \label{enu:int}$L_{b}\left(U,V\right)\cong\sd U\underset{K,\pi}{\cten}V$
and $L_{b}\left(\sd U,V\right)\cong U\cten V$ \cite[20.9, 20.12]{nfa};
\item $V\otimes_{K,i}W\cong V\otimes_{K,\pi}W$ and thus $V\cten_{K,i}W\cong V\cten_{K,\pi}W$
\cite[1.1.31]{EM};
\item \label{enu:closedLB}We have a topological isomorphism $V\cten W\cong\ilim\left(V_{n}\cten W\right)$\textup{
(see appendix)};
\item \textup{\label{enu:-and-CTBten}$\sd{V\cten W}=\sd V\cten\sd W$ and
$\sd{F\cten W}=\sd F\cten\sd W$ (see appendix).}
\end{enumerate}
\end{prop}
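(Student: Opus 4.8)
Items (1)--(5) are the cited standard facts about nuclear Fréchet and compact type spaces; the genuinely new content, relegated to the appendix, is the mixed case in which one factor is an LS- (or FS-) space and the other is merely a Banach space. This is precisely the situation that the nuclear-times-nuclear references do not cover, because the Banach factor destroys nuclearity. The plan is to treat (6) as the core statement and to deduce (7) from it by duality. Throughout I would write $V\cong\ilim V_n$ with \emph{injective compact} transition maps $\phi_{n,n+1}:V_n\to V_{n+1}$ and $\phi_n:V_n\to V$, which is harmless for a compact type space.

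For (6): the maps $\phi_n\cten\id{W}:V_n\cten W\to V\cten W$ are continuous and compatible with the $\phi_{n,n+1}\cten\id{W}$, so the universal property of the locally convex inductive limit yields a continuous linear map $u:\ilim(V_n\cten W)\to V\cten W$. Algebraically $(\ilim V_n)\ten W=\ilim(V_n\ten W)$ and $V\ten W$ is dense in $V\cten W$, so $u$ has dense image; injectivity follows since the $\phi_{n,n+1}$, hence the $\phi_{n,n+1}\cten\id{W}$ (using that $W$ is free, so that $V_n\cten W\cong c_0(E_W,V_n)$ and the transition maps act coordinatewise), are injective. It remains to see that $u$ is a topological isomorphism. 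One inclusion of topologies is the continuity just established; the reverse inclusion --- that the inductive-limit topology on $\ilim(V_n\cten W)$ is no finer than the tensor topology --- is the crux. I would prove it through the free description: since $W\cong c_0(E_W,K)$ one has $X\cten W\cong c_0(E_W,X)$ for every Banach $X$, so (6) becomes the assertion that $c_0(E_W,-)$ commutes with the compact-type limit, $c_0(E_W,\ilim V_n)\cong\ilim c_0(E_W,V_n)$. The essential point is that a null family in $V$ is uniformly captured at a single level $V_n$; this is exactly where the \emph{compactness} of the transition maps enters, via compactoidness of $\phi_{n,n+1}(V_n^0)$ in $V_{n+1}$, and it is the technical lemma I would isolate in the appendix. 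Completeness and regularity of $\ilim(V_n\cten W)$ then follow, and one may alternatively finish by the Open Mapping theorem \ref{thm:(Open-Mapping-theorem)} applied to the continuous bijection $u$ out of the LB-space.

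For (7) I would dualize (6). Taking strong duals of $V\cten W\cong\ilim(V_n\cten W)$ and using that the strong dual of a complete regular LB-space is the projective limit of the strong duals gives $\sd{V\cten W}\cong\plim\sd{(V_n\cten W)}$. The subtle point is that this is \emph{not} computed termwise: for infinite-dimensional Banach $V_n,W$ one has $\sd{(V_n\cten W)}\supsetneq\sd{V_n}\cten\sd{W}$ in general. What rescues the identity is that the strong dual is governed by the \emph{bounded} sets of $\ilim(V_n\cten W)$, and these are compactoid in the $V$-direction because the $\phi_{n,n+1}$ are compact; this compactoidness cuts the termwise duals down to exactly $\sd{V_n}\cten\sd W$ in the limit, so that $\sd{V\cten W}\cong\plim(\sd{V_n}\cten\sd W)\cong(\plim\sd{V_n})\cten\sd W\cong\sd V\cten\sd W$, the last two steps using the Fréchet projective-tensor identity (cf. (2)) for $\sd V=\plim\sd{V_n}$ against the Banach space $\sd W$. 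The FS-case $\sd{F\cten W}\cong\sd F\cten\sd W$ then reduces to the LS-case: $\sd F=\ilim\sd{F_n}$ is an LS-space, so (6) gives $\sd F\cten\sd W\cong\ilim(\sd{F_n}\cten\sd W)$, while $F\cten W\cong\plim(F_n\cten W)$ by the same Fréchet identity, and the two are matched through the identical compactoid duality.

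I expect the main obstacle to be the reverse topology inclusion in (6) --- equivalently, that $\ilim(V_n\cten W)$ is regular and complete with its bounded sets compactoid in the $V$-direction. Everything else is either a universal-property formality, the free-module rewriting $X\cten c_0(E_W,K)\cong c_0(E_W,X)$, or an application of the Open Mapping theorem together with the cited identities (1)--(5). The compactness of the transition maps of the compact type (respectively nuclear Fréchet) factor is the single ingredient that makes both (6) and (7) work and that is absent in a purely Banach situation.
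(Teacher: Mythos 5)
Your reduction of (6) to the claim that $c_{0}(E_{W},-)$ commutes with the compact limit, and your null-family lemma (a null family in $V=\ilim V_{n}$ lies in some $V_{n}$ and is null there, by regularity of the compact limit plus the fact that the $V$- and $V_{n+1}$-topologies agree on compactoid sets), are correct as far as they go: they give the continuous bijection $u:\ilim\left(V_{n}\cten W\right)\to V\cten W$ and they show that the two topologies have the same bounded sets --- this is in substance the paper's splitting Lemma \ref{lem:LBsplitting}, which the paper derives \emph{from} (6) and needs only for (7), whereas you run the logic in the opposite direction. The genuine gap is the passage from there to the topological isomorphism. ``Same underlying space and same bounded sets'' forces equality of the two topologies only if the \emph{coarser} one (the tensor topology on $V\cten W$) is bornological, and nothing in your proposal establishes that. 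The Open Mapping theorem cannot close this: Theorem \ref{thm:(Open-Mapping-theorem)} as stated in the paper requires the \emph{target} of the continuous surjection to be an LB- or Fr\'echet space, and $V\cten W$ is not known to be one at this stage --- that it is an LB-space is precisely assertion (6); De Wilde's general theorem \cite[24.30]{MEI} requires the target to be ultrabornological instead, and proving that $V\cten W$ is (ultra)bornological is exactly what the paper's appendix proof does. Its one essential idea, absent from your proposal, is this: a locally bounded $f:V\cten W\to Z$ into a Banach space gives a separately continuous bilinear map on $V\times W$ (using that $V$, being an LS-space, is ultrabornological), hence a map continuous for the inductive tensor topology by its universal property; therefore $V\cten W$ is bornological, hence, being complete, ultrabornological, and only then does the webbed/ultrabornological Open Mapping theorem apply to $u$. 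Without this step neither your primary route nor your ``alternative finish'' closes.

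For (7) your route is genuinely different from the paper's and, granted (6) and regularity, it can be made rigorous: since $\phi_{n,n+1}$ is compact, the dual of $\phi_{n,n+1}\cten\id W$ carries $\sd{V_{n+1}\cten W}\cong L_{b}\left(V_{n+1},\sd W\right)$ into the compact operators $C\left(V_{n},\sd W\right)\cong\sd{V_{n}}\cten\sd W$, so the projective systems $\left\{ \sd{V_{n}\cten W}\right\} $ and $\left\{ \sd{V_{n}}\cten\sd W\right\} $ intertwine and have the same limit, while regularity ensures that the strong dual of the inductive limit is the projective limit of the strong duals; this yields $\sd{V\cten W}\cong\plim\left(\sd{V_{n}}\cten\sd W\right)\cong\sd V\cten\sd W$. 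The paper argues differently and does not dualize (6) termwise: it combines the splitting Lemma \ref{lem:LBsplitting} with the identification $\sd V\cten\sd W\cong L_{b}\left(V,\sd W\right)$ of \cite[18.8]{nfa}, the continuous bijection $\sd{V\cten W}\to L_{b}\left(V,\sd W\right)$ of \cite[1.1.35]{EM}, and a comparison of open lattices via \cite[6.4, 6.5]{nfa}; moreover the Fr\'echet half of (7) is simply quoted from \cite[20.13]{nfa}, whereas your re-derivation of it from the LS-case would additionally require the strong dual of the Fr\'echet space $F\cten W$ to carry the inductive limit topology $\ilim\sd{F_{n}\cten W}$, a nontrivial distinguishedness-type point you do not address.
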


\subsection{Topological Coalgebras}
\begin{defn}
Let's first review related basic definitions of the theory of topological
algebras (and set the terminology, which can differ in the literature).
\begin{itemize}
\item An LCTVS algebra $A$ is an LCTVS with a continuous multiplication
map.
\item An F-algebra $A$ is a topological algebra, which is a Fréchet space.
\item An FS-algebra $A$ is an F-algebra, which is a nuclear Fréchet space.
\item A Fréchet algebra $A$ is an F-algebra, whose topology can be given
by a (countable) system of submultiplicative seminorms. In this case
it is a locally convex projective limit of Banach $\cten$-algebras
with transition maps being $\cten$-algebra morphisms.
\item A Nuclear Fréchet (NF) algebra $A$ is a Fréchet algebra, which is
a nuclear space.
\end{itemize}
\end{defn}
One can require the multiplication on LCTVS algebra only to be separately
continuous. In this case, it is a monoid in the tensor category (LCTVS,
$\otimes_{i,K}$) with the tensor structure given by inductive tensor
product. In our definition, it is a monoid w.r.t projective tensor
product $\otimes_{\pi,K}$. For Fréchet spaces these two notions coincide,
since any separately continuous map in this case is (jointly) continuous
and an F-algebra is an algebra in the tensor category of Fréchet spaces
with the tensor structure given by $\cten$.

Every Fréchet space $A$ can be presented as a locally convex projective
limit of Banach spaces , i.e. ${\displaystyle A=\plim A_{n}}$. If
$A$ is a Fréchet algebra, $A_{n}$ can be chosen to be Banach algebras
(Arens-Michael presentation) and if $A$ is an NF-algebra, one can
chose $\left(A_{n},\phi_{nm}\right)$ such that transition maps $\phi_{nm}$
are compact. In general, for multiplication to be continuous only
a family form of submultiplicativity is required, i.e. $\norm[x\cdot y]_{n}\leq\norm[x]_{n+k}\norm[y]_{x+k}$
(see \cite{G}) and thus for an F-algebra an Arens-Michael presentation
might not exist.

Our main object of interest is the category, opposite to the category
of NF-algebras. For an NF-algebra we will call a corresponding Arens-Michael
system an \emph{NF-structure}. It is known that any two NF-structures
are equivalent \cite[1.2.7]{EM} in the category of projective systems
of Banach spaces.
\begin{defn}
Let $C$ be an LCTVS. 
\begin{itemize}
\item For a tensor structure $\cten$ ($\cten_{i,K}$, $\cten_{\pi,K}$)
on LCTVS, $C$ is an \emph{LCTVS }$\cten$\emph{-coalgebra} if it
is a $\cten$-coalgebra object in the tensor category (LCTVS,$\cten$). 
\item $C$ is LS(LB)-$\cten$-coalgebra if it is an LCTVS $\cten$-coalgebra
and an LS(LB)-space.
\item $C$ is a \emph{CT (Compact Type) }$\cten$-\emph{coalgebra}, if it
is an LS-coalgebra, that is topologically isomorphic to a compact
locally convex inductive limit of a compact inductive system $\left(C_{n},\phi_{nm}\right)$
of Banach $\cten$-coalgebras with transition maps $\phi_{nm}$ being
$\cten$-coalgebra morphisms. We say that $\left(C_{n},\phi_{nm}\right)$
gives $C$ a \emph{CT-structure}.
\end{itemize}
\end{defn}
If $\left\{ C_{n},\phi_{n}\right\} $ be an inductive system of K-Banach
$\cten$-coalgebras $C_{n}$ with injective transition maps $\phi_{n}:C_{n}\to C_{n+1},$
s.t. $\phi_{n}$ are morphisms of $K-$Banach $\cten$-coalgebras,
then $C={\displaystyle \ilim C_{n}}$ is a $\cten$-coalgebra in the
category of locally convex K-vector spaces, with $\cten$-coalgebra
maps $(\Delta,\epsilon)$ defined by the corresponding maps $(\Delta_{n},\epsilon_{n})$.
Thus $C_{n}$ indeed define an LB-$\cten$-coalgebra structure in
$C$. We remind again that for sequentially complete (equivalently,
regular) LB-spaces $\cten_{i,K}$ and $\cten_{\pi,K}$ coincide. In
particular, it is true for LS-spaces.

Since for any inductive system one can construct the one with the
same inductive limit and injective transition maps, without loss of
generality we can only consider CT-structures with injective transition
maps.
\begin{defn}
We say that two CT-structures $C=\ilim C_{n}$ and $C=\ilim D_{n}$
are equivalent if they are isomorphic in the category of inductive
systems of Banach $\cten$-coalgebras.\end{defn}
\begin{lem}
For a CT-$\cten$-coalgebra $C$ any two CT-structures with injective
transition maps are equivalent.\end{lem}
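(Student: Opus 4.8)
The plan is to exhibit the required isomorphism of inductive systems by interleaving the two systems into a single commutative ladder. Write the two CT-structures as $C=\ilim C_n$ and $C=\ilim D_m$ with injective compact transition maps, and denote by $\iota_n\colon C_n\to C$ and $j_m\colon D_m\to C$ the canonical maps; since the transition maps are injective these canonical maps are injective, and by definition they are morphisms of Banach $\cten$-coalgebras. It then suffices to produce, for every $n$, an index $m(n)$ and a coalgebra morphism $f_n\colon C_n\to D_{m(n)}$ with $j_{m(n)}\circ f_n=\iota_n$, together with the symmetric data $g_m\colon D_m\to C_{n(m)}$ with $\iota_{n(m)}\circ g_m=j_m$: the compositions $g_{m(n)}\circ f_n$ and $f_{n(m)}\circ g_m$ are then forced to be transition maps, which is exactly an isomorphism in the category of inductive systems.

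First I would construct the comparison maps from the regularity of LS-spaces. Recall that a compact type space $C=\ilim D_m$ is regular: every bounded subset of $C$ lies in, and is bounded in, some $D_m$ (this is part of the structure theory of compact type spaces, \cite{ST4}, and reflects the compactness of the transition maps). Fix $n$. Since $C_n$ is a Banach space and $\iota_n$ is continuous, the image $\iota_n(C_n^{0})$ of the closed unit ball is bounded in $C$, hence contained and bounded in some $D_{m(n)}$. As $j_{m(n)}$ is injective and $\iota_n(C_n)\subset j_{m(n)}(D_{m(n)})$, there is a unique linear map $f_n\colon C_n\to D_{m(n)}$ with $j_{m(n)}\circ f_n=\iota_n$; because $f_n(C_n^{0})$ is bounded in the Banach space $D_{m(n)}$, the map $f_n$ is bounded, hence continuous. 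The same construction applied to the other system yields the maps $g_m$.

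Next I would check that $f_n$ is a coalgebra morphism. The counit condition is automatic, since $\ep_{D_{m(n)}}\circ f_n=\ep_C\circ j_{m(n)}\circ f_n=\ep_C\circ\iota_n=\ep_{C_n}$. For comultiplication, regard $\Delta_{D_{m(n)}}\circ f_n$ and $(f_n\cten f_n)\circ\Delta_{C_n}$ as maps $C_n\to D_{m(n)}\cten D_{m(n)}$; composing either one with $j_{m(n)}\cten j_{m(n)}$ and using that $\iota_n$ and $j_{m(n)}$ are coalgebra morphisms produces the common value $\Delta_C\circ\iota_n$, so the two maps coincide provided $j_{m(n)}\cten j_{m(n)}$ is injective. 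This injectivity holds: by Proposition~\ref{prop:topten}(1) we have $C\cten C=\ilim(D_k\cten D_k)$, and since $K$ is discretely valued all Banach spaces are free, so the completed tensor product of an injective map with the identity is again injective (one simply reads off coefficients in an orthogonal basis); hence each transition map $\psi_k\cten\psi_k$ is injective and the canonical map $D_{m(n)}\cten D_{m(n)}\hookrightarrow C\cten C$ is injective. Thus $f_n$, and symmetrically $g_m$, are morphisms of Banach $\cten$-coalgebras.

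Finally I would assemble the ladder. After a cofinal reindexing (a map into $D_m$ extends to $D_{m'}$ for $m'\ge m$ via the transition maps) we may assume $m(\cdot),n(\cdot)$ are increasing with $n(m(n))\ge n$ and $m(n(m))\ge m$. Then $\iota_{n(m(n))}\circ(g_{m(n)}\circ f_n)=j_{m(n)}\circ f_n=\iota_n$, and since $\iota_{n(m(n))}$ is injective and the transition map $C_n\to C_{n(m(n))}$ satisfies the same identity, $g_{m(n)}\circ f_n$ equals that transition map; symmetrically $f_{n(m)}\circ g_m$ is a transition map of the $D$-system. These commutativities say precisely that $(f_n)$ and $(g_m)$ are mutually inverse isomorphisms in the category of inductive systems of Banach $\cten$-coalgebras, so the two CT-structures are equivalent. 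I expect the main obstacle to be the coalgebra-compatibility step: everything hinges on the regularity that produces the factorizations and on the injectivity of the completed tensor squares $j_m\cten j_m$, the remaining index bookkeeping being routine.
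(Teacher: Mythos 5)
Your proof is correct and takes essentially the same route as the paper's: the two systems are equivalent as inductive systems of Banach spaces (via regularity/factorization for compact inductive limits), and injectivity of the canonical maps then forces the equivalence maps to be coalgebra morphisms. In fact your write-up is more complete than the paper's, which simply asserts that the Banach-space equivalence maps ``must be'' $\cten$-coalgebra morphisms, whereas you supply the key justification for that step --- the injectivity of $j_{m(n)}\cten j_{m(n)}$, obtained from $C\cten C=\ilim\left(D_{k}\cten D_{k}\right)$ together with freeness of Banach spaces over a discretely valued field.
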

\begin{proof}
Let $\left(C_{n},\phi_{nm}\right)$ and $\left(D_{n},\psi_{nm}\right)$
be two CT-structures for $C$. Since $C=\ilim C_{n}$ and $C=\ilim D_{n}$,
$\left(C_{n},\phi_{nm}\right)$ and $\left(D_{n},\psi_{nm}\right)$
are equivalent as inductive systems of Banach spaces. Since $\phi_{nm}$
and $\psi_{nm}$ are injective, the embeddings $\phi_{n}:C_{n}\to C$
and $\psi_{n}:D_{n}\to D$ are also injective and therefore the above
equivalence maps must be Banach $\cten$-coalgebra morphisms.\end{proof}
\begin{rem}
\label{LSdualFS}Every LS-$\cten$-coalgebra is a $\cten$-coalgebra
object in the category of LS-spaces. Since the dual of a completed
tensor product $C\cten D$ of two LS-spaces $C$ and $D$ is the completed
tensor product $C'_{b}\cten D'_{b}$ of their strong duals, the duality
functor maps the commutative diagrams, defining $\cten$-coalgebra
structure for $C$, into the diagrams, which satisfy $\cten$-algebra
axioms for $C_{b}'$. Thus the dual of a $\cten$-coalgebra object
in the category of LS-spaces is an $\cten$-algebra object in the
category of FS-spaces and we have an antiequivalence of categories
of LS-$\cten$-coalgebras and FS-$\cten$-algebras.
\end{rem}
Clear that if $C$ is a CT-$\cten$-coalgebra $C=\lim_{\to}C_{n}$,
then $C_{b}'$ is an NF-$\cten$-algebra with NF-structure $\left(C'_{n},\phi'_{nm}\right)$.
\begin{prop}
\label{nfdualct}Let $A=\left(A,m,u\right)$ be an NF-$\cten$-algebra
with NF-structure $\left(A_{n},\phi_{nm}\right)$, $A\cong\plim A_{n}$.
Then $A'_{b}$ is a CT-$\cten$-coalgebra $\left(A'_{b},\bd m,\bd u\right)$
with a CT-structure $\left(\bhd{A_{n}},\bhd{\phi_{nm}}\right)$.\end{prop}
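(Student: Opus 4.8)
The plan is to dualize the given NF-structure term by term, using Proposition~\ref{propBHD} to identify the right notion of dual for Banach $\cten$-algebras, and then check that passing to strong duals turns the defining projective limit into an inductive limit with the required compactness. Concretely, start from the NF-structure $A\cong\plim A_n$ with compact transition maps $\phi_{nm}:A_m\to A_n$ (for $m\geq n$), which are $\cten$-algebra morphisms. The first step is to apply the dual Banach coalgebra functor $\bhd{(-)}$ of Proposition~\ref{propBHD} to each $A_n$, obtaining Banach $\cten$-coalgebras $\bhd{A_n}$, and to each transition map, obtaining morphisms $\bhd{\phi_{nm}}:\bhd{A_n}\to\bhd{A_m}$ of Banach $\cten$-coalgebras (by part~(4) of that proposition). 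Thus $(\bhd{A_n},\bhd{\phi_{nm}})$ is an \emph{inductive} system of Banach $\cten$-coalgebras, and its limit $\ilim\bhd{A_n}$ is an LB-$\cten$-coalgebra by the remark following the definition of CT-coalgebra.

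\textbf{Compactness of transition maps.} The second step, which I expect to be the main obstacle, is to show that the dual transition maps $\bhd{\phi_{nm}}$ are compact, so that $\ilim\bhd{A_n}$ is genuinely an LS-space (compact type), not merely LB. Here I would use the hypothesis that the $\phi_{nm}$ are compact together with the fact that the dual of a compact operator between Banach spaces is compact; since $\bhd{\phi_{nm}}$ is the restriction of $\phi_{nm}'$ to $\bhd{A_n}\subset\bd{A_n}$, and $\bhd{A_n}$ carries the norm topology inherited from $\bd{A_n}$, compactness of $\phi_{nm}'$ forces compactness of the restriction. The delicate point is to confirm that the CT-structure defined \emph{coalgebraically} (via $\bhd{(-)}$) is compatible with the purely topological LS-structure on $A'_b$ coming from the duality of FS- and LS-spaces recorded in the earlier Proposition on limits of sequences of spaces.

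\textbf{Identifying the limit with the strong dual.} The third step is to verify the topological identification $A'_b\cong\ilim\bhd{A_n}$. Since $A$ is an NF-algebra, hence a nuclear Fr\'echet space, the earlier results give that its strong dual $A'_b$ is an LS-space and that, as a space, $A'_b\cong\ilim\sd{A_n}$ with the full continuous duals $\sd{A_n}$. The content here is that the coalgebra structure forces us to work with $\bhd{A_n}$ rather than all of $\sd{A_n}$; but because $A$ is nuclear, the transition maps are compact and the finite dual considerations collapse, so I would argue that the completed inductive limit of the $\bhd{A_n}$ recovers all of $A'_b$ as a topological vector space. The comultiplication $\bd m$ and counit $\bd u$ on $A'_b$ are obtained by dualizing $m$ and $u$; Remark~\ref{LSdualFS} then guarantees that these dual maps satisfy the $\cten$-coalgebra axioms, since duality sends the FS-$\cten$-algebra diagrams for $A$ to LS-$\cten$-coalgebra diagrams for $A'_b$.

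\textbf{Assembling the structure.} Finally, I would check that the coalgebra maps $\bd m,\bd u$ on the limit are induced by the coalgebra maps $\bhd m_n:=\bhd{(m|_{A_n})}$ and $\bhd u_n$ on each $\bhd{A_n}$ — this follows from part~(6) of Proposition~\ref{propBHD} applied levelwise, together with the compatibility of $\bhd{(-)}$ with the transition maps from part~(4). Combining the compact inductive system $(\bhd{A_n},\bhd{\phi_{nm}})$ of Banach $\cten$-coalgebras with the identification $A'_b\cong\ilim\bhd{A_n}$ then exhibits exactly a CT-structure on $A'_b$ in the sense of the definition, completing the proof. The only genuinely nontrivial input is the compactness and completeness bookkeeping in the second and third steps; everything else is a levelwise application of the already-established Banach duality functor $\bhd{(-)}$ and the antiequivalence of Remark~\ref{LSdualFS}.
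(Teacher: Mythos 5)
Your steps (1), (2) and (4) are fine, and they correspond to what the paper treats as immediate; the heart of the proposition is your third step, and there you assert rather than prove the key identity. Knowing that $\sd A\cong\ilim\bd{A_{n}}$ as locally convex spaces does \emph{not} by itself give $\sd A\cong\ilim\bhd{A_{n}}$: each $\bhd{A_{n}}=\left(\bd{m_{A_{n}}}\right)^{-1}\left(\bd{A_{n}}\cten\bd{A_{n}}\right)$ is in general a proper closed subspace of $\bd{A_{n}}$ (that is the whole point of Diarra's construction), so a priori $\ilim\bhd{A_{n}}$ could be a proper closed $\cten$-subcoalgebra of $\sd A$. Your justification --- ``because $A$ is nuclear, the transition maps are compact and the finite dual considerations collapse, so I would argue that the completed inductive limit of the $\bhd{A_{n}}$ recovers all of $A'_b$'' --- is a restatement of the desired conclusion, not an argument. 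This identity is exactly the one thing the paper's proof establishes (``the only thing one needs to prove is that $\ilim\bhd{A_{n}}=\sd A$''), so the central step of your write-up is missing.

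There are two ways to close the gap. The paper's route: identify $\ilim\bhd{A_{n}}$, viewed inside $\sd A$, with the preimage $\left(\bd m\right)^{-1}\left(\sd A\cten\sd A\right)$, and then invoke nuclearity exactly once, through the identification $\sd A\cten\sd A=\sd{A\cten A}$ of proposition \ref{prop:topten}; since $\bd m$ maps $\sd A$ into $\sd{A\cten A}$, the preimage of the whole space is all of $\sd A$. Alternatively, your sketch can be completed by an interleaving argument, but it requires a lemma you never state: if $\phi_{nm}:A_{m}\to A_{n}$ ($m>n$) is a \emph{compact} homomorphism of Banach $\cten$-algebras, then $\bd{\phi_{nm}}\left(\bd{A_{n}}\right)\subset\bhd{A_{m}}$. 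Indeed, for $\mu\in\bd{A_{n}}$ one has
\[
\bd{m_{A_{m}}}\left(\mu\circ\phi_{nm}\right)=\bd{\left(\phi_{nm}\cten\phi_{nm}\right)}\left(\bd{m_{A_{n}}}\left(\mu\right)\right),
\]
and since over a discretely valued field a compact map is a norm limit of finite rank maps, while the dual of a finite rank map carries $\bd{\left(A_{n}\cten A_{n}\right)}$ into $\bd{A_{m}}\ten\bd{A_{m}}$, the right-hand side lies in the closed subspace $\bd{A_{m}}\cten\bd{A_{m}}$, i.e.\ $\mu\circ\phi_{nm}\in\bhd{A_{m}}$. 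With this lemma the inductive systems $\left\{ \bd{A_{n}}\right\} $ and $\left\{ \bhd{A_{n}}\right\} $ are interleaved and therefore have the same locally convex inductive limit $\sd A$. Note that this lemma is not contained in proposition \ref{propBHD}: parts (1) and (4) there concern elements already in $\bhd A$ and duals of arbitrary homomorphisms, and give no information about where the dual of a \emph{compact} homomorphism sends the full dual space. Without one of these two arguments, nothing in your proposal rules out $\ilim\bhd{A_{n}}\subsetneq\sd A$.
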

\begin{proof}
The only thing one needs to prove is that $\ilim\bhd{A_{n}}=\sd A$.
Recall that $\bhd{A_{n}}=\left(\bd{m_{A_{n}}}\right)^{-1}\left(A_{n}'\cten A_{n}'\right)$.
Since $\bhd{\phi_{nm}}$ are Banach $\cten$-coalgebra morphisms,
$\bhd{A_{n}}$ form a compact inductive system of Banach $\cten$-coalgebras,
with the locally convex inductive limit $\ilim\bhd{A_{n}}\subset\sd A$
being the preimage $\left(\bd m\right)^{-1}\left(\sd A\cten\sd A\right)$.
But since $\left(\sd A\cten\sd A\right)=\sd{A\cten A}$, the preimage
of the coaction is the whole $\sd A$, which is itself an LS-$\cten$-coalgebra.
\end{proof}
Since spaces of compact type are reflexive, the above proposition
gives us an antiequivalence of categories
\[
\begin{array}{ccc}
\left\{ \mbox{CT-\ensuremath{\cten}-coalgebras}\right\}  & \longleftrightarrow & \left\{ \mbox{NF-\ensuremath{\cten}-algebras}\right\} \end{array}
\]
with morphisms being continuous topological $\cten$-coalgebra and
$\cten$-algebra morphisms.

The notions of $\cten$-subcoalgebra, $\cten$-left coideal, $\cten$-right
coideal and $\cten$-coideal (2-sided) for topological $\cten$-coalgebras
are defined similarly to the Banach case.
\begin{prop}
Let $V$ be a CT-$\cten$-coalgebra.
\begin{enumerate}
\item If $U$ is a closed $\cten$-subcoalgebra, then $U$ is also of compact
type. 
\item If $I$ is a closed $\cten$-coideal of $V,$ then $V/I$ is a CT-$\cten$-coalgebra.
\item If $f:V\to W$ is a morphism of (topological) CT-$\cten$-coalgebras
then $\Ker f$ is closed $\cten$-coideal.
\item If J is a closed $\cten$-coideal of W then $f^{-1}\left(J\right)$
is a closed $\cten$-coideal.
\item The closure of $f\left(V\right)$ is a closed CT-$\cten$-subcoalgebra.
\end{enumerate}
\end{prop}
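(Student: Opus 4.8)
The plan is to treat each item as the compact-type analogue of an already-established Banach-coalgebra statement, reducing to the Banach level through a CT-structure and controlling the topology with the LS/FS facts together with Proposition~\ref{prop:topten} (compatibility of $\cten$ with inductive and projective limits). Fix a CT-structure $V\cong\ilim C_{n}$ with injective transition maps $\phi_{nm}$, so that each $\phi_{n}:C_{n}\emb V$ is a compact injective $\cten$-coalgebra morphism. I would prove the items in the order (1), (2), (3), then deduce (4) from (2) and (3), and (5) from (1).

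For (1), set $U_{n}:=\phi_{n}^{-1}\left(U\right)$. Since $U$ is closed and $\phi_{n}$ is continuous and injective, $U_{n}$ is a closed subspace of the Banach space $C_{n}$, hence Banach, and $U\cong\ilim U_{n}$ with compact injective transition maps; as a closed subspace of an LS-space, $U$ is of compact type. The point is that each $U_{n}$ is a Banach $\cten$-subcoalgebra of $C_{n}$: for $x\in U_{n}$ one has $\left(\phi_{n}\cten\phi_{n}\right)\left(\Delta_{C_{n}}\left(x\right)\right)=\Delta_{V}\left(\phi_{n}\left(x\right)\right)\in U\cten U$, and pulling this back along $\phi_{n}\cten\phi_{n}$ gives $\Delta_{C_{n}}\left(x\right)\in U_{n}\cten U_{n}$. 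Thus $\left(U_{n},\phi_{nm}|_{U_{n}}\right)$ is a CT-structure and $U$ is a CT-$\cten$-subcoalgebra. For (2), put $I_{n}:=\phi_{n}^{-1}\left(I\right)$; the same pullback argument together with $\epsilon_{C_{n}}=\epsilon_{V}\circ\phi_{n}$ shows each $I_{n}$ is a closed $\cten$-coideal of $C_{n}$, so $C_{n}/I_{n}$ is a Banach $\cten$-coalgebra. Since $V/I\cong\ilim C_{n}/I_{n}$ is the quotient of an LS-space by a closed subspace, it is of compact type, and the induced maps $\left(\Delta,\epsilon\right)$ make it a CT-$\cten$-coalgebra.

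For (3), $\Ker f$ is closed because $f$ is continuous and $W$ is Hausdorff, and $\epsilon_{V}\left(\Ker f\right)=0$ follows from $\epsilon_{V}=\epsilon_{W}\circ f$. For the coideal condition, take $x\in\Ker f$; then $\left(f\cten f\right)\left(\Delta_{V}\left(x\right)\right)=\Delta_{W}\left(f\left(x\right)\right)=0$, so the entire content is the identity $\Ker\left(f\cten f\right)=\Ker f\cten V+V\cten\Ker f$ (completed sum), which places $\Delta_{V}\left(x\right)$ in $\Ker f\cten V+V\cten\Ker f$. Part (4) is then immediate: $f^{-1}\left(J\right)=\Ker\left(\pi_{J}\circ f\right)$, where $\pi_{J}:W\to W/J$ is the quotient CT-$\cten$-coalgebra morphism from (2), so $f^{-1}\left(J\right)$ is a closed $\cten$-coideal by (3). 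For (5), from $\Delta_{W}\circ f=\left(f\cten f\right)\circ\Delta_{V}$ one gets $\Delta_{W}\left(f\left(V\right)\right)\subseteq f\left(V\right)\cten f\left(V\right)$; continuity of $\Delta_{W}$ and the identity $\overline{f\left(V\right)}\cten\overline{f\left(V\right)}=\overline{f\left(V\right)\cten f\left(V\right)}$ yield $\Delta_{W}\bigl(\overline{f\left(V\right)}\bigr)\subseteq\overline{f\left(V\right)}\cten\overline{f\left(V\right)}$, so $\overline{f\left(V\right)}$ is a closed $\cten$-subcoalgebra, and by (1) it is of compact type.

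The main obstacle is the recurring tensor-product bookkeeping: the pullback $\left(\phi_{n}\cten\phi_{n}\right)^{-1}\left(U\cten U\right)=U_{n}\cten U_{n}$ in (1), the analogous statement for coideals in (2), and above all the kernel identity $\Ker\left(f\cten f\right)=\Ker f\cten V+V\cten\Ker f$ in (3). I would settle all of these uniformly: by Proposition~\ref{prop:topten} the completed tensor products decompose as $V\cten V\cong\ilim\left(C_{n}\cten C_{n}\right)$, and likewise for $W$, reducing each claim to the corresponding Banach-level identity; and the Banach-level identity follows, exactly as in the proof of Proposition~\ref{prop:ctenemb}, from the fact that over the discretely valued field $K$ every Banach space is free, so one may choose orthogonal bases adapted to the relevant closed subspace and read off the decomposition of the completed tensor product directly. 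Once this toolkit is in place, every part reduces to its already-established Banach analogue.
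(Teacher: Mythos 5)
Your proposal is correct and follows essentially the same route as the paper: parts (1)--(2) are proved by intersecting with the levels of a CT-structure ($U_{n}=U\cap C_{n}$, $I_{n}=I\cap C_{n}$) and passing to the locally convex inductive limit, while (3)--(5) are the Banach-case arguments (Sweedler-style kernel identity for $\Ker f$, preimage as the kernel of the composite with the quotient map, closure of the image via continuity of $\cm$) transported to the CT setting, which is exactly what the paper means when it says the remaining proofs are ``the same as in the Banach case.'' The only difference is one of detail: you make explicit the completed-tensor-product bookkeeping (pullbacks of $U\cten U$ along $\phi_{n}\cten\phi_{n}$ and the identity for $\Ker\left(f\cten f\right)$, settled by orthogonal bases over the discretely valued field as in Proposition \ref{prop:ctenemb}) that the paper's very terse proof leaves implicit.
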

\begin{proof}
$U\subset V$ is closed iff $U_{n}=U\cap V$ is closed $\forall n$.
Thus $U_{n}$ are Banach subspaces of $V_{n}$ and, since $U$ is
a $\cten$-subcoalgebra, are K-Banach $\cten$-subcoalgebras of $V_{n}.$
So $U={\displaystyle \ilim U_{n}}$ is a CT-$\cten$-coalgebra.

Same argument works for $V/I={\displaystyle \ilim V_{n}/I_{n}}$ with
$I_{n}=I\cap V_{n}$, which proves (2).

The proofs of the rest statements are the same as in Banach case.
\end{proof}

\subsection{Topological Bialgebras and Hopf algebras.}

We have similar definitions of an LCTVS-, LS-, FS-, CT- and NF-$\cten$-bialgebras.

Similar to the remark \ref{LSdualFS}, the categories of LS- and FS-$\cten$-bialgebras
are antiequivalent under the duality map $V\mapsto\sd V$. To see
if the categories of CT-$\cten$-bialgebras and NF-$\cten$-bialgebras
are equivalent, we will first establish an auxiliary result. 
\begin{lem}
\label{lem:subset}Let F and H be FS-spaces with presentations $F=\plim F_{n}$
and $H=\plim H_{n}$. Let $f:F\to H$ be a morphism, which is defined
by a morphism of projective systems $\left\{ f_{n}:F_{n}\to H_{n}\right\} $.
Let $U=\plim U_{n}$, $U_{n}\subset H_{n}$, be a closed subspace
of $H$. Then $f^{-1}\left(U\right)=\plim f_{n}^{-1}\left(U_{n}\right)$.\end{lem}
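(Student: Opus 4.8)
The plan is to work with the concrete realization of an FS-space as the space of compatible sequences inside the product of its defining Banach spaces, to reduce the asserted identity to an elementary computation at each level $n$, and then to verify that the projective-limit topology and the subspace topology on the resulting common set agree. Throughout I write $\phi_n\colon F\to F_n$, $\psi_n\colon H\to H_n$ for the canonical projections and $\phi_{nm}\colon F_m\to F_n$, $\psi_{nm}\colon H_m\to H_n$ (for $m\ge n$) for the transition maps, so that $f$ acts on compatible sequences by $f((v_m))=(f_m(v_m))$ and the compatibility $\psi_{nm}\circ f_m=f_n\circ\phi_{nm}$ holds. Recall that $F=\plim F_n$ is the subspace of $\prod_n F_n$ of sequences $(v_m)$ with $\phi_{nm}(v_m)=v_n$, and likewise for $H$ and for $U=\plim U_n$; note that the very presentation $U=\plim U_n$ presupposes $\psi_{nm}(U_m)\subseteq U_n$.

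First I would check that $\{f_n^{-1}(U_n)\}$ is a projective subsystem of $\{F_n\}$. Each $f_n^{-1}(U_n)$ is a closed, hence Banach, subspace of $F_n$, being the preimage of the closed subspace $U_n$ under the continuous map $f_n$. For $v\in f_{n+1}^{-1}(U_{n+1})$ one computes $f_n(\phi_{n+1,n}(v))=\psi_{n+1,n}(f_{n+1}(v))\in\psi_{n+1,n}(U_{n+1})\subseteq U_n$, so $\phi_{n+1,n}$ restricts to a map $f_{n+1}^{-1}(U_{n+1})\to f_n^{-1}(U_n)$; these restrictions are again compact, so $\plim f_n^{-1}(U_n)$ is a genuine FS-space sitting inside $F$.

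Next comes the set-theoretic equality. An element of $\plim f_n^{-1}(U_n)$ is a compatible sequence $(v_m)$ with $v_n\in f_n^{-1}(U_n)$ for all $n$; it lies in $F$ and satisfies $f_n(v_n)\in U_n$ for every $n$, i.e. $f((v_m))\in\plim U_n=U$, whence $(v_m)\in f^{-1}(U)$. Conversely, if $(v_m)\in f^{-1}(U)$ then $(f_n(v_n))\in U$ forces $f_n(v_n)\in U_n$, so $v_n\in f_n^{-1}(U_n)$ for all $n$ and $(v_m)\in\plim f_n^{-1}(U_n)$. Thus the two spaces coincide as subsets of $F$.

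It remains to identify the topologies, and this is the one point requiring (mild) care. For a zero-neighborhood $W$ in $F_n$, the subbasic subspace-neighborhood is $\{\,(v_m)\in f^{-1}(U): v_n\in W\,\}$, while the subbasic projective-limit neighborhood is $\{\,(v_m)\in\plim f_m^{-1}(U_m): v_n\in W\cap f_n^{-1}(U_n)\,\}$. Since membership in the limit already forces $v_n\in f_n^{-1}(U_n)$, the extra condition is vacuous, and by the set equality just proved these two families of subbasic sets literally coincide; hence the topologies agree. Alternatively, the canonical map $\plim f_n^{-1}(U_n)\to f^{-1}(U)$ is a continuous bijection of FS-spaces — the target being a closed subspace of the FS-space $F$ and the source an FS-space by the first step — so the Open Mapping theorem (Theorem~\ref{thm:(Open-Mapping-theorem)}) upgrades it to a topological isomorphism. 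The main obstacle is therefore not any hard estimate but precisely this bookkeeping identification of the subspace and projective-limit topologies; the underlying equality is automatic, since a preimage is a limit and limits commute with limits.
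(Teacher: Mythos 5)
Your proof is correct and is precisely what the paper intends: its entire proof reads ``Follows from definitions unpacking,'' and your argument carries out that unpacking (realizing the limits as compatible sequences, checking the preimages form a projective subsystem, and matching the subspace and projective-limit topologies). The topological identification via the coincidence of subbasic neighborhoods (or, alternatively, the Open Mapping theorem) is exactly the routine verification the paper leaves implicit.
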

\begin{proof}
Follows from definitions unpacking.\end{proof}
\begin{prop}
\label{prop:DualNFisCT}For a CT-$\cten$-bialgebra $B\cong\ilim B_{n}$
with CT-structure $\left(B_{n},\rho_{nm}\right)$, the dual system
$\left(\bhd{B_{n}},\bhd{\rho_{nm}}\right)$ forms an NF-structure
for $\sd B$. Thus $\sd B$ is an NF-$\cten$-bialgebra.\end{prop}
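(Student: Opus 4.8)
The plan is to carry the $\cten$-bialgebra structure of $B$ over to $\sd B$ by duality and then to identify $\bigl(\bhd{B_n},\bhd{\rho_{nm}}\bigr)$ as its NF-structure. Since $B$ is an LS-space, its strong dual $\sd B$ is automatically an FS-space, hence nuclear Fréchet, so the only real work lies in the structure maps and in pinning down the Banach building blocks. For the structure maps I would dualize exactly as in Remark \ref{LSdualFS}: because $B$ is of compact type, Proposition \ref{prop:topten}(3) gives $\sd{B\cten B}=\sd B\cten\sd B$, so the multiplication $m_B$ dualizes to an honest comultiplication $m_B'\colon\sd B\to\sd B\cten\sd B$, the comultiplication $\cm_B$ dualizes to the convolution product $\cm_B'\colon\sd B\cten\sd B\to\sd B$, and $\ep_B'$, $u_B'$ supply the counit and unit. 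Dualizing the commuting diagrams that encode the bialgebra axioms for $B$ shows these maps make $\sd B$ a $\cten$-bialgebra.

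The heart of the matter is the identity $\sd B=\plim\bhd{B_n}$, and here I would apply Lemma \ref{lem:subset}. Dualizing the LS-presentations and using Proposition \ref{prop:topten}(2) gives $\sd B=\plim\sd{B_n}$ and $\sd{B\cten B}=\plim\sd{(B_n\cten B_n)}$, with $m_B'$ induced by the projective system of Banach maps $m_{B_n}'\colon\sd{B_n}\to\sd{(B_n\cten B_n)}$ (compatibility with the transition maps is precisely the statement that each $\rho_{nm}$ is an algebra homomorphism). Let $U:=\sd B\cten\sd B=\plim\bigl(\sd{B_n}\cten\sd{B_n}\bigr)$, a closed subspace of $\sd{B\cten B}$ whose $n$-th term $\sd{B_n}\cten\sd{B_n}$ is closed in $\sd{(B_n\cten B_n)}$. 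By definition $\bhd{B_n}=(m_{B_n}')^{-1}\bigl(\sd{B_n}\cten\sd{B_n}\bigr)$, so Lemma \ref{lem:subset} yields $(m_B')^{-1}(U)=\plim\bhd{B_n}$. On the other hand Proposition \ref{prop:topten}(3) forces $U=\sd B\cten\sd B=\sd{B\cten B}$, i.e. $U$ is the entire target of $m_B'$, whence $(m_B')^{-1}(U)=\sd B$. Therefore $\sd B=\plim\bhd{B_n}$.

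It then remains to verify that $\bigl(\bhd{B_n},\bhd{\rho_{nm}}\bigr)$ really is an NF-structure. Each $B_n$ is a Banach $\cten$-bialgebra, so $\bhd{B_n}$ is again a Banach $\cten$-bialgebra by \cite[Theorem 4]{BD5}, and $\bhd{\rho_{nm}}=\rho_{nm}'|_{\bhd{B_m}}$ is a Banach $\cten$-coalgebra morphism by Proposition \ref{propBHD}(4) (and an algebra morphism by construction). The transition maps $\rho_{nm}$ are compact by definition of a CT-structure, whence their adjoints, and the restrictions $\bhd{\rho_{nm}}$, are compact as well, so the system has compact transition maps. Finally the multiplications and comultiplications of the $\bhd{B_n}$ pass to the limit through Proposition \ref{prop:topten}(2) and coincide with the convolution product $\cm_B'$ and the comultiplication $m_B'$ built above; hence $\sd B=\plim\bhd{B_n}$ is an NF-$\cten$-bialgebra with the asserted NF-structure.

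I expect the second paragraph to be the main obstacle. A priori $\bhd{B_n}$ is a proper subspace of the full dual $\sd{B_n}$, so it is not at all clear that the projective limit $\sd B=\plim\sd{B_n}$ lands inside $\plim\bhd{B_n}$; equivalently, one must know that $f|_{B_n}\in\bhd{B_n}$ holds for every $f\in\sd B$ and every $n$. What saves the argument is the compact-type tensor--dual identity $\sd{B\cten B}=\sd B\cten\sd B$, which collapses $U$ onto the whole target and thereby turns the preimage into all of $\sd B$; Lemma \ref{lem:subset} is exactly the tool that legitimizes interchanging the preimage with the projective limit.
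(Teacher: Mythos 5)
Your proof is correct and follows essentially the same route as the paper's: both arguments reduce everything to the identity $\sd B=\plim\bhd{B_{n}}$, obtained by combining Lemma \ref{lem:subset} (to commute the preimage under $\bd{m_{B}}$ with the projective limit of the closed subsystem $\bd{B_{n}}\cten\bd{B_{n}}\subset\bd{\left(B_{n}\cten B_{n}\right)}$) with the nuclearity identity $\sd{B\cten B}=\sd B\cten\sd B$, which forces that preimage to be all of $\sd B$. The only difference is that you spell out the verifications the paper dismisses as clear, namely the Banach $\cten$-bialgebra structure on each $\bhd{B_{n}}$ via \cite[Theorem 4]{BD5} and the compactness of the dualized transition maps $\bhd{\rho_{nm}}$.
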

\begin{proof}
Clear that $\left(\bhd{B_{n}},\bhd{\rho_{nm}}\right)$ is an NF-system
of Banach $\cten$-bialgebras. The proof that its projective limit
is $\sd B$ is similar to the Proposition \ref{nfdualct}. The maps
$\bd{m_{B_{n}}}:\bd{B_{n}}\to\bd{\left(B_{n}\cten B_{n}\right)}$
define a morphism of projective systems $\left\{ \bd{B_{n}}\right\} $
and $\left\{ \bd{\left(B_{n}\cten B_{n}\right)}\right\} $. The tensor
products $\bd{B_{n}}\cten\bd{B_{n}}\subset\bd{\left(B_{n}\cten B_{n}\right)}$
form a projective subsystem and thus define a closed subspace of $\sd{B\cten B}$.
By preceding lemma \ref{lem:subset}, $\left(\bd{m_{B}}\right)^{-1}\sd{B\cten B}=\left(\bd{m_{B}}\right)^{-1}\left(\plim\bd{B_{n}}\cten\bd{B_{n}}\right)$
is equal to the $\plim\left(\bd{m_{B_{n}}}\right)^{-1}\left(\bd{B_{n}}\cten\bd{B_{n}}\right)$.
Since, by nuclearity, $\sd{B\cten B}=\sd B\cten\sd B$, we have $\sd B=\left(\bd{m_{B}}\right)^{-1}\left(\sd B\cten\sd B\right)$.
All together it gives us 
\[
\sd B=\left(\bd{m_{B}}\right)^{-1}\left(\sd{B\cten B}\right)=\left(\bd{m_{B}}\right)^{-1}\left(\sd B\cten\sd B\right)=\left(\bd{m_{B}}\right)^{-1}\left(\plim\bd{B_{n}}\cten\bd{B_{n}}\right)=
\]
\[
=\plim\left(\bd{m_{B_{n}}}\right)^{-1}\left(\bd{B_{n}}\cten\bd{B_{n}}\right)=\plim\bhd{B_{n}}.
\]
Thus $\left(\bhd{B_{n}},\bhd{\rho_{nm}}\right)$ defines an NF-structure
on $\bd B$.
\end{proof}
Thus we have an equivalence of categories

\[
\begin{array}{ccc}
\left\{ \mbox{CT-\ensuremath{\cten}-bialgebras}\right\}  & \longleftrightarrow & \left\{ \mbox{NF-\ensuremath{\cten}-bialgebras}\right\} \end{array}.
\]

LCTVS-, LS-, FS-, CT- and NF-Hopf $\cten$-algebras are defined similarly.
One can easily check that, similar to algebraic and Banach case, the
antipode on a topological bialgebra is unique if exists, and that
the proposition \ref{prop:DualNFisCT} still holds. Thus we also have
an equivalence of categories

\[
\begin{array}{ccc}
\left\{ \mbox{CT-Hopf \ensuremath{\cten}-algebras}\right\}  & \longleftrightarrow & \left\{ \mbox{NF-Hopf \ensuremath{\cten}-algebras}\right\} \end{array}.
\]

The notions of $\cten$-subbialgebra (Hopf $\cten$-subalgebra), left
$\cten$-biideal (Hopf $\cten$-ideal), right $\cten$-biideal (Hopf
$\cten$-ideal) and $\cten$-biideal (Hopf $\cten$-ideal) (2-sided)
can be defined similar to the Banach case.

The quotient of a (CT-) $\cten$-bialgebra (Hopf $\cten$-algebra)
by a closed $\cten$-biideal (Hopf $\cten$-ideal) is a (CT-) $\cten$-bialgebra
(Hopf $\cten$-algebra).

The kernel of the morphism of $\cten$-bialgebras (Hopf $\cten$-algebras)
is closed $\cten$-biideal (Hopf $\cten$-ideal).

Preimage of a closed $\cten$-biideal (Hopf $\cten$-ideal) is a closed
$\cten$-biideal (Hopf $\cten$-ideal).

Closure of the image of a morphism of $\cten$-bialgebras (Hopf $\cten$-algebras)
is a closed $\cten$-subbialgebra (Hopf $\cten$-subalgebra). The
proof is the same as in Banach case.
\begin{example}
For any open compact subgroup $G$ of a locally analytic $K$-group
$\mathbb{G}$ the algebra of locally analytic $K$-valued functions
$C^{la}\left(G,K\right)$is a commutative CT-Hopf $\cten$- algebra.
Its is known that $G$ possesses a system of neighborhoods of zero
$\left\{ H_{i}\right\} $ consisting of open normal compact subgroups
$H_{i}$. The CT-structure is given by Banach Hopf $\cten$-algebras
$C_{H_{i}}^{la}\left(G,K\right)$ of locally $H_{i}$-analytic functions,
i.e. the functions $f\in C^{la}\left(G,K\right)$ s.t. $f|_{gH_{i}}$
is (rigid) analytic for all $g\in G$. The strong dual space $D^{la}\left(G,K\right)=\sd{C^{la}\left(G,K\right)}$
is a cocommutative NF-Hopf $\cten$-algebra.

For a worked out noncommutative noncocommutative example we refer
to \cite{Lyu}.
\end{example}

\section{modules and comodules}

\subsection{Definitions}
\begin{defn}
Let $C$ be a CT-coalgebra and $V$ is an LCTVS. Then we say that
\begin{enumerate}
\item $V$ is a right \emph{LCTVS }$\cten_{\pi}$-\emph{comodule} over $C$
($V\in\mbox{LCVSComod}_{C}$) if there exists a $K$-linear continuous
map $\rho_{V}:V\to V\cten_{\pi}C$ such that 
\[
\begin{array}{c}
\left(id_{V}\bar{\otimes}\epsilon_{C}\right)\circ\rho_{V}=id_{V}\\
\left(\rho_{V}\otimes id_{C}\right)\circ\rho_{V}=\left(id_{V}\otimes\Delta_{C}\right)\circ\rho_{V}
\end{array}.
\]

\item $V$ is an\emph{ LS(LB)-}$\cten$-\emph{comodule} over $C$ if $V$
is an LCTVS comodule and an LS(LB)-space.
\item $V$ is a \emph{CT-}$\cten$-\emph{comodule} over $C$ if $V$ is
an LS-comodule over $C$ and is isomorphic to the locally convex inductive
limit $V\cong\ilim V_{n}$ of a compact inductive system $\left(V_{n},\psi_{n,m}\right)$
of Banach comodules $\left(V_{n},\rho_{V_{n}}\right)$ over $C_{n}$,
where $\left(C_{n},\phi_{nm}\right)$ is a CT-structure for C. For
a fixed $\left(C_{n},\phi_{nm}\right)$ we will also say that $V$
is a $\left(C_{n},\phi_{nm}\right)$-comodule.
\end{enumerate}
\end{defn}
As morphisms of LCTVS-comodules we take continuous comodule maps.
\begin{defn}
Let $A$ be an NF-$\cten$-algebra and $V$ be an LCTVS. Then we say
that
\begin{enumerate}
\item $V$ is a right \emph{LCTVS-}$\cten$-\emph{module} over $A$ if it
is a $\cten$-module over $A$.
\item $V$ is a \emph{right FS-}$\cten$-\emph{module} over $A$, if, in
addition, $V$ is a FS-space.
\item $V$ is an \emph{NF-}$\cten$-\emph{module} over $A$, if it is a
projective limit of a compact projective system $\left(V_{n}\right)_{n\in\mathbb{N}}$
of Banach $A_{n}$-$\cten$-modules $V_{n}$, where $\left(A_{n}\right)$
is an NF-structure for $A$.
\end{enumerate}
\end{defn}
By duality, for an LCTVS-$\cten$-comodule $V$, $\bd V=\sd V$ is
an LCTVS which is a right $\sd C-$module. 
\begin{defn}
We will say that $V'$ is a $\cten$-module, dual to $\cten$-comodule
$V.$ 
\end{defn}
Note that if $C$ is a CT-Hopf $\cten$-algebra, then $\sd C$ is
an NF-Hopf $\cten$-algebra.

Clearly the dual of an LS-$\cten$-comodule is a FS-$\cten$-module
and vice versa. The duality map gives an anti-equivalence of categories
\[
\rlscomod C\sim\rdlsmod{\sd C}.
\]
Since CT- and NF-spaces are reflexive, one can prove similarly to
proposition \ref{prop:DualNFisCT} that dual of NF-$\cten$-module
is a CT-$\cten$-comodule. Thus we also have an anti-equivalence of
categories 
\[
\rctcomod C\sim\rnfmod{\sd C}.
\]

\begin{defn}
On $V$ there is also a left topological $\sd C-$$\cten$-module
structure
\[
\begin{array}{ccc}
m:\mbox{ }\sd C\cten V\to V\\
\lambda\otimes v\mapsto\lambda\cdot v & = & \left(id_{V}\bar{\otimes}\lambda\right)\circ\rho_{V}\left(v\right).
\end{array}
\]
Such $\cten$-modules (with $\cten$-module structure coming from
$\cten$-comodule structure) are called \emph{induced}.
\end{defn}

\subsection{Rationality}

Similar to the section \ref{sub:Rational-modules}, one can define
a notion of a \emph{t-rational }left LS-$\cten$-module over $\sd C$.
If $V$ is a left $\sd C-\cten-$module and a compact type LCTVS,
then, by proposition \ref{prop:topten}.\ref{enu:int}, we have the
map \ref{eq:ratmod} $\rho_{V}:V\to V\cten C\cong L_{b}\left(\sd C,V\right)$
(where $\rho_{V}\left(v\right)\left(\lambda\right)=\lambda\cdot v),$
such that the $\sd C$-module structure on $V$ is exactly $\lambda\cdot v=\left(\id V\otimes\lambda\right)\circ\rho_{V}\left(v\right).$
The $\cten$-module axioms for $V$ imply that $\rho_{V}$ satisfy
right $\cten$-comodule axioms (\cite[2.1.1]{sw}). Thus we have 
\begin{prop}
All continuous $\sd C$-$\cten$-modules structures on LS-space are
t-rational.
\end{prop}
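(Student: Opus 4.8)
The plan is to build the comodule coaction directly out of the given module action and then observe that the t-rationality requirement, which is a genuine restriction in the Banach setting, becomes automatic once $V$ is of compact type. First I would start from the continuous left $\sd C$-$\cten$-module structure $m\colon\sd C\cten V\to V$, $\lambda\ten v\mapsto\lambda\cdot v$. Fixing $v\in V$ and letting $\lambda$ vary produces a continuous linear map $\rho_{V}(v)\colon\sd C\to V$, $\rho_{V}(v)(\lambda)=\lambda\cdot v$; assembling these over all $v$ gives the continuous map $\rho_{V}\colon V\to L_{b}(\sd C,V)$ of (\ref{eq:ratmod}), through which the module action is recovered as $\lambda\cdot v=(\id V\bten\lambda)\circ\rho_{V}(v)$. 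In these terms, t-rationality is exactly the statement that $\rho_{V}$ takes its values in the subspace $V\cten C$ of $L_{b}(\sd C,V)$.

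The key step, and the whole reason the statement holds without any hypothesis beyond compact type, is that this is not a proper subspace. Applying Proposition \ref{prop:topten}.\ref{enu:int} with $U=C$ gives a topological isomorphism $L_{b}(\sd C,V)\cong C\cten V$, which by symmetry of $\cten$ is $V\cten C$. Hence every continuous linear map $\sd C\to V$ already corresponds to an element of $V\cten C$, and the inclusion $\rho_{V}(V)\subseteq V\cten C$ holds for free. This is precisely where the compact-type hypothesis enters, through the nuclearity underlying \cite[20.9, 20.12]{nfa}: in the Banach case of Section \ref{sub:Rational-modules} one only had the proper inclusion $M\cten C\subset\Ban(C',M)$, which is why t-rationality had to be imposed separately.

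It then remains to verify that $\rho_{V}$ obeys the right $\cten$-comodule axioms, namely $(\id V\bten\ep_{C})\circ\rho_{V}=\id V$ and $(\rho_{V}\ten\id C)\circ\rho_{V}=(\id V\ten\cm_{C})\circ\rho_{V}$. Under the identification above these correspond term by term to the unit and associativity axioms of the $\sd C$-module $V$, by the same manipulation as in the algebraic case \cite[2.1.1]{sw}, and continuity of $\rho_{V}$ was already noted. I expect no real obstacle beyond the isomorphism $L_{b}(\sd C,V)\cong V\cten C$ itself; since that is already available as Proposition \ref{prop:topten}.\ref{enu:int}, the proposition follows at once.
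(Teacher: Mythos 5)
Your proposal is correct and follows essentially the same route as the paper: both arguments rest on the identification $L_{b}\left(\sd C,V\right)\cong V\cten C$ from Proposition \ref{prop:topten}.\ref{enu:int} (valid since $V$ and $C$ are LS-spaces), so that the map $\rho_{V}$ of (\ref{eq:ratmod}) automatically lands in $V\cten C$, and both then invoke the argument of \cite[2.1.1]{sw} to convert the $\cten$-module axioms into the $\cten$-comodule axioms. Your added remark that this is exactly where compact type (via nuclearity) is used, in contrast with the proper inclusion $M\cten C\subset\mathrm{Ban}\left(C',M\right)$ of the Banach case, is a correct reading of why the paper's proof works.
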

Similarly to the Banach case, morphisms of t-rational modules are
comodule morphisms. Thus the above proposition gives an equivalence
of categories
\[
\rlscomod C\sim_{\sd C}\mbox{LSMod}.
\]

\subsection{Quotients, subobjects and simplicity}
\begin{prop}
\label{pro:Let-V-be-1}Let V be a right (CT-) LS-$\cten$-comodule
over C and U be a closed $\cten$-subcomodule of C. Then U and $V/U$
are right (CT-) LS-$\cten$-comodules over C and the exact sequence
\[
0\to U\to V\to V/U\to0
\]
give rise to the exact sequence of right (NF-) FS-$\cten$-modules
over $\sd C$ 
\[
0\to\left(V/U\right)'_{b}\to V_{b}'\to U_{b}'\to0
\]
with the morphisms in this exact sequence being strict.\end{prop}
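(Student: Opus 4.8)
The plan is to treat the two assertions separately: first equip $U$ and $V/U$ with (CT-)LS-$\cten$-comodule structures, and then dualize the sequence and verify that the resulting maps are module morphisms, that the sequence is algebraically exact, and that all morphisms are strict. For the comodule structures I would invoke the cited fact that a closed subspace and a quotient by a closed subspace of an LS-space are again LS-spaces, so $U$ and $V/U$ are LS-spaces. The coaction on $U$ is the restriction $\rho_{V}|_{U}$, which lands in $U\cten C$ precisely because $U$ is a closed $\cten$-subcomodule; the coaction on $V/U$ is obtained by descent, composing $\rho_{V}$ with $q\cten\id C$ (where $q\colon V\to V/U$) and using right-exactness of $-\cten C$ to identify $\Ker{q\cten\id C}$ with $U\cten C$, so that the composite kills $U$ and factors through a map $V/U\to(V/U)\cten C$. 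The comodule axioms for these induced coactions follow by functoriality from those of $\rho_{V}$.

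For the CT-refinement, writing $V\cong\ilim V_{n}$ with $V_{n}$ Banach $C_{n}$-comodules and (without loss of generality) injective transition maps, I would set $U_{n}:=U\cap V_{n}$ and $(V/U)_{n}:=V_{n}/U_{n}$. The step requiring care is that $U_{n}$ is a closed $C_{n}$-subcomodule: one has $\rho_{V_{n}}\!\left(U_{n}\right)\subset V_{n}\cten C_{n}$ and, by compatibility of the coactions along the system, its image in $V\cten C$ lies in $U\cten C$; identifying $(U\cap V_{n})\cten C_{n}$ with $(U\cten C)\cap(V_{n}\cten C_{n})$ inside $V\cten C$ --- an identity of completed tensor products of closed subspaces of free Banach spaces, provable with orthogonal bases exactly as in the proof of Proposition \ref{prop:ctenemb} --- then yields $\rho_{V_{n}}\!\left(U_{n}\right)\subset U_{n}\cten C_{n}$. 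With this, $U=\ilim U_{n}$ and $V/U=\ilim(V/U)_{n}$ exhibit the required CT-structures.

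Now I dualize. Since the strong dual of an LS-space is an FS-space and that of a CT-comodule an NF-module (the anti-equivalences $\rlscomod C\sim\rdlsmod{\sd C}$ and $\rctcomod C\sim\rnfmod{\sd C}$ established above), the duals $\sd{(V/U)}$, $\sd V$, $\sd U$ are (NF-)FS-$\cten$-modules and the dual maps $q'$ and $j'$ (where $j\colon U\emb V$) are $\sd C$-module morphisms. Algebraic exactness is the standard annihilator computation: $q'$ is injective with image $\ann U=\{\phi\in\sd V:\phi|_{U}=0\}$, while $j'\colon\sd V\to\sd U$ is the restriction map, whose kernel is exactly $\ann U$, so $\Imm{q'}=\Ker{j'}$; surjectivity of $j'$ is the extension of a continuous functional from the closed subspace $U$ to $V$, i.e. Hahn--Banach, which is available since $K$ is a complete discretely valued field and hence spherically complete.

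Finally, for strictness: the surjection $j'\colon\sd V\to\sd U$ is a continuous surjection of FS-spaces, hence open by the Open Mapping theorem (Theorem \ref{thm:(Open-Mapping-theorem)}), so it is strict; and $q'$ is a continuous bijection onto the closed subspace $\ann U\subset\sd V$, which is itself an FS-space as a closed subspace of one, so the Open Mapping theorem again makes $q'$ a topological isomorphism onto its image, i.e. strict. The main obstacle I anticipate is the CT-structure step --- reconciling the subcomodule condition with the defining inductive system, that is, the tensor--intersection identity $(U\cten C)\cap(V_{n}\cten C_{n})=U_{n}\cten C_{n}$; once this is in hand, everything else reduces to the cited Open Mapping and duality results together with Hahn--Banach.
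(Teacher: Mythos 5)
Your proof is correct and follows essentially the same route as the paper, whose entire proof is the citation ``Similar to \cite[1.2]{ST4}'': closed subspaces and quotients of LS-spaces are again LS-spaces, exactness of the dual sequence comes from Hahn--Banach (available because a complete discretely valued $K$ is spherically complete), and strictness comes from the Open Mapping theorem \ref{thm:(Open-Mapping-theorem)}. The comodule-level details you supply---descent of the coaction to $V/U$, the CT-structure $U_{n}=U\cap V_{n}$, and the intersection identity $\left(U\cten C\right)\cap\left(V_{n}\cten C_{n}\right)=U_{n}\cten C_{n}$---are exactly what the paper's citation leaves implicit, and your sketch of the latter is sound: over discretely valued $K$ closed subspaces of Banach spaces are complemented and $\cten$ preserves injectivity of bounded injections, so writing $V_{n}=U_{n}\oplus W_{n}$ and composing with $q\cten\id{C}$ forces the $W_{n}\cten C_{n}$-component of any such element to vanish.
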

\begin{proof}
Similar to \cite[1.2]{ST4}.\end{proof}
\begin{prop}
\label{prop:hdcom}Let $M\in\rnfmod A$ and $\left\{ M_{n}\right\} $
is an $\left\{ A_{n}\right\} $-structure on M. Then each $M_{n}$
is a topological A-$\cten$-module and
\begin{enumerate}
\item Each $\sd{M_{n}}$ is an LB $\sd A$-$\cten$-comodule;
\item $\bhd{M_{n}}\cong\sd{M_{n}}\ccten{\sd A}\bhd{A_{n}}$. 
\end{enumerate}
\end{prop}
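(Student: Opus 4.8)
The plan is to obtain the $A$-module structure on each $M_n$ by restriction of scalars, dualize it to get the comodule structure of (1), and then realize $\bhd{M_n}$ as the image of a counit map out of a cotensor product. First, the projection $\phi_n\colon A\to A_n$ attached to the NF-structure is a continuous $\cten$-algebra morphism, so the Banach $A_n$-module $M_n$ becomes a topological $A$-module by restriction of scalars: its $A$-action is the composite $M_n\cten A\to M_n\cten A_n\to M_n$ of $\id{M_n}\cten\phi_n$ with the $A_n$-action $\phi_{M_n}$, both continuous. For (1) I would dualize $\phi_{M_n}\circ(\id{M_n}\cten\phi_n)$. As $A$ is an NF- (hence FS-) space and $M_n$ is Banach, Proposition~\ref{prop:topten}.\ref{enu:-and-CTBten} gives $\sd{M_n\cten A}=\sd{M_n}\cten\sd A$, so the dual of the $A$-action is a continuous map $\rho\colon\sd{M_n}\to\sd{M_n}\cten\sd A$; dualizing the unit and associativity axioms turns the counit and coassociativity axioms into identities (all intervening tensor products dualize by Proposition~\ref{prop:topten}), so $\rho$ is a $\sd A$-coaction. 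By Proposition~\ref{prop:topten}.\ref{enu:closedLB} the target $\sd{M_n}\cten\sd A\cong\ilim(\sd{M_n}\cten\bhd{A_k})$ is an LB-space, which exhibits the Banach space $\sd{M_n}$ as an LB $\sd A$-comodule.

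For (2) I would, after arranging injective transition maps, use that the structural map $\iota_n\colon\bhd{A_n}\to\sd A$ is an injective $\cten$-coalgebra morphism whose image is a Banach $\cten$-subcoalgebra of $\sd A$; via $\iota_n$, the space $\bhd{A_n}$ is a left $\sd A$-comodule ${}_{\iota_n}\bhd{A_n}$, and the cotensor product $\sd{M_n}\ccten{\sd A}\bhd{A_n}$ is a closed, hence Banach, subspace of $\sd{M_n}\cten\bhd{A_n}$. The proof of Lemma~\ref{lem:-is-a} is diagrammatic and applies with $B=\sd A$ and $C=\bhd{A_n}$, so the counit map $\id{\sd{M_n}}\bten\epsilon\colon\sd{M_n}\ccten{\sd A}\bhd{A_n}\to\sd{M_n}$ has image the maximal closed $\bhd{A_n}$-subcomodule of $\sd{M_n}$. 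Because the $A$-action on $M_n$ is the restriction of the $A_n$-action along $\phi_n$, its dual factors as $\rho=(\id{\sd{M_n}}\cten\bd{\phi_n})\circ\bd{\phi_{M_n}}$ with $\bd{\phi_n}|_{\bhd{A_n}}=\iota_n$ by the construction of the CT-structure on $\sd A$ (Proposition~\ref{nfdualct}); this identifies that maximal subcomodule with $\bhd{M_n}=(\bd{\phi_{M_n}})^{-1}(\sd{M_n}\cten\bhd{A_n})$, the largest Banach subspace of $\sd{M_n}$ carrying a $\bhd{A_n}$-comodule structure.

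Finally I would upgrade this to a topological isomorphism. Injectivity of $\id{\sd{M_n}}\bten\epsilon$ follows from the counit computation in Sweedler notation: for $x$ in the cotensor product with $(\id{\sd{M_n}}\bten\epsilon)(x)=0$, applying $\epsilon_{\bhd{A_n}}$ to the last tensor factor of the defining relation $(\rho\cten\id)(x)=(\id\cten\lambda)(x)$, where $\lambda=(\iota_n\cten\id{\bhd{A_n}})\circ\Delta_{\bhd{A_n}}$, yields $(\id{\sd{M_n}}\cten\iota_n)(x)=0$, so $x=0$ because $\iota_n$ is a topological embedding. Thus $\id{\sd{M_n}}\bten\epsilon$ is a continuous bijection of Banach spaces onto $\bhd{M_n}$, and the Open Mapping theorem (Theorem~\ref{thm:(Open-Mapping-theorem)}) makes it a topological isomorphism. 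The step I expect to be the main obstacle is the compatibility asserted in (2): checking that $\rho$ genuinely factors through $\iota_n$, i.e. that the structural map $\iota_n$ coincides with $\bd{\phi_n}|_{\bhd{A_n}}$ so that the maximal $\bhd{A_n}$-subcomodule computed inside the LB-space $\sd{M_n}\cten\sd A$ agrees with the intrinsically defined $\bhd{M_n}$; this requires careful bookkeeping between the $A$- and $A_n$-module structures.
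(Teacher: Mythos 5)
Your proof follows essentially the same route as the paper's: the $A$-module structure on $M_n$ comes from restriction of scalars along $\pi_n$, dualized via Proposition \ref{prop:topten}.\ref{enu:-and-CTBten} to produce the coaction in (1), and (2) is obtained from the counit map of Lemma \ref{lem:-is-a}, whose image identifies $\sd{M_n}\ccten{\sd A}\bhd{A_n}$ with the maximal closed $\bhd{A_n}$-subcomodule of $\sd{M_n}$, i.e.\ with $\bhd{M_n}$. Your extra steps (the injectivity computation, the Open Mapping upgrade to a topological isomorphism, and the bookkeeping $\bd{\phi_n}|_{\bhd{A_n}}=\iota_n$) only make explicit what the paper's terser proof leaves implicit.
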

\begin{proof}
Since $M_{n}$ is an $A$-module via $\pi_{n}:A\to A_{n}$ with the
module structure given by the map $m_{M_{n}}\circ\left(\id{M_{n}}\otimes\pi_{n}\right):M_{n}\cten A\to M_{n}$,
it follows from proposition \ref{prop:topten}.\ref{enu:-and-CTBten}
that taking duals gives us the map $\left(m_{M_{n}}\circ\left(\id{M_{n}}\otimes\pi_{n}\right)\right)'=\cm_{n}:\sd{M_{n}}\to\sd{M_{n}}\cten\sd A$.
$\cm_{n}$ satisfies $\cten$-comodule axioms, which proves the first
statement. The image of $\sd{M_{n}}\ccten{\sd A}\bhd{A_{n}}$ in $\sd{M_{n}}$
under $\id{\sd{M_{n}}}\bten\epsilon_{A}$ is the closure of all elements
$m$ such that $\rho_{\sd{M_{n}}}\left(m\right)\in\sd{M_{n}}\cten\bhd{A_{n}}$,
which is exactly the definition of $\bhd{M_{n}}$.\end{proof}
\begin{defn}
We call a topological $\cten$-comodule $V$ simple, if it does not
have any closed $\cten$-subcomodules. \end{defn}
\begin{prop}
Let $V\in\rlscomod C$. Then 
\begin{enumerate}
\item V is simple if and only if $V'_{b}$ is simple as $\bd C-$$\cten$-module;
\item V is simple if and only if V is simple as $\bd C$-$\cten$-module.
\end{enumerate}
\end{prop}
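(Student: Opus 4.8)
I would prove the two equivalences by different means: part (2) from the rationality identification between comodules and induced modules, and part (1) from the strong duality of Proposition \ref{pro:Let-V-be-1}. For part (2) the plan is to check that the closed $\cten$-subcomodules of $V$ coincide with the closed $\sd C$-$\cten$-submodules of $V$ for the induced left action. One direction is immediate from the definitions: if $U$ is closed with $\rho_V(U)\subseteq U\cten C$, then $U$ is stable under $\lambda\cdot v=(\id V\bten\lambda)\circ\rho_V(v)$, hence a closed submodule. For the converse I would invoke the rationality result proved above --- every continuous $\sd C$-$\cten$-module structure on an LS-space is t-rational --- which, exactly as in the Banach statement that a closed subspace is a $C$-subcomodule iff it is a closed $\sd C$-submodule, forces a closed submodule $U$ to satisfy $\rho_V(U)\subseteq U\cten C$ (here one uses that a closed subspace of an LS-space is again an LS-space). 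The two lattices of closed subobjects thus coincide, and the equivalence of the two notions of simplicity in (2) follows.

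For part (1) I would use that $(-)'_b\colon\rlscomod C\to\rdlsmod{\sd C}$ is an anti-equivalence and that, by Proposition \ref{pro:Let-V-be-1}, a closed subcomodule $U\subseteq V$ produces a short exact sequence of $\sd C$-$\cten$-modules $0\to\sd{(V/U)}\to\sd V\to\sd U\to 0$ with strict maps. Concretely the correspondence is realized by annihilation: send $U$ to $\ann U\subseteq\sd V$. The pairing computation $(\phi\cdot c')(u)=(\phi\bten c')(\rho_V(u))=0$ for $u\in U$ and $\phi\in\ann U$, using $\rho_V(U)\subseteq U\cten C$, shows that $\ann U$ is a right $\sd C$-submodule, and Proposition \ref{pro:Let-V-be-1} identifies it with $\sd{(V/U)}$. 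This map is inclusion-reversing with $0\mapsto\sd V$ and $V\mapsto 0$, so it carries proper nonzero closed subcomodules to proper nonzero closed submodules.

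The step I expect to be the main obstacle is surjectivity of this correspondence, i.e. that every closed $\sd C$-submodule $W\subseteq\sd V$ equals $\ann U$ for some closed subcomodule $U$. My plan is to set $U:={}^{\perp}W$ and verify, by the pairing computation run in the opposite direction (now viewing $V\cong\sd{\sd V}$ as the dual module of $\sd V$, whose comodule structure is dual to the module structure on $\sd V$), that $\rho_V(U)\subseteq U\cten C$, so that $U$ is a closed subcomodule; then $\ann{{}^{\perp}W}=W$ closes the loop. This last equality is the bipolar theorem, and it is available precisely because LS- and FS-spaces are reflexive, so $V\cong\sd{\sd V}$ and closed subspaces are weakly closed. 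Granting this bijection between closed subcomodules of $V$ and closed submodules of $\sd V$, $V$ has a proper nonzero closed subcomodule iff $\sd V$ does, which is statement (1).
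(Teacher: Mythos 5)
Your proposal is correct and matches the paper's own (very terse) proof in structure: the paper disposes of (1) by citing Proposition \ref{pro:Let-V-be-1} and of (2) by saying it is ``similar to the Banach case,'' which is exactly your rationality-based identification of closed $\cten$-subcomodules with closed $\sd C$-submodules. Your annihilator correspondence $U\mapsto\ann U$, $W\mapsto{}^{\perp}W$, justified by reflexivity, Hahn--Banach (available since $K$ is discretely valued, hence spherically complete) and the strictness in Proposition \ref{pro:Let-V-be-1}, is precisely the content the paper leaves implicit in ``follows from.''
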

\begin{proof}
(1) follows from Proposition \ref{pro:Let-V-be-1}. (2) is proved
similarly to Banach case.
\end{proof}
The following is the analog of \cite[lemma 3.9]{ST3}
\begin{prop}
\label{pro:Let-V-be}Let V be a $\left(C_{n}\right)$-$\cten$-comodule
over a CT-$\cten$-coalgebra $C$ and let $\rho_{V}|_{V_{n}}=\rho_{V_{n+1}}|_{V_{n}}=\rho_{V_{n}}.$
Suppose there exists N>0 such that $V_{n}$ are simple for all n>N.
Then V is simple.\end{prop}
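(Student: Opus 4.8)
The plan is to reduce simplicity of $V$ to the assumed simplicity of the Banach comodules $V_n$ by intersecting a hypothetical subcomodule with the terms of the CT-structure. Suppose $U\subseteq V$ is a nonzero closed $\cten$-subcomodule; I must show $U=V$. For each $n$ set $U_n:=U\cap V_n$. Since $V\cong\ilim V_n$ is an LS-space and $U$ is closed, the intersection $U_n$ is closed in $V_n$ for every $n$, by the characterization of closed subsets of a locally convex inductive limit \cite[1.1.6]{GPKS}. The sequence $(U_n)$ is increasing, and because the transition maps are injective we have, as a set, $V=\bigcup_n V_n$, hence also $U=\bigcup_n U_n$.

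The key step is to verify that each $U_n$ is a $\cten$-subcomodule of $V_n$, i.e. $\rho_{V_n}(U_n)\subseteq U_n\cten C_n$. Let $u\in U_n$. By the compatibility hypothesis $\rho_{V_n}=\rho_V|_{V_n}$ we have $\rho_{V_n}(u)=\rho_V(u)$; since $U$ is a subcomodule of $V$ this element lies in $U\cten C$, while by construction it also lies in $V_n\cten C_n$. Hence $\rho_{V_n}(u)$ belongs to the intersection $(U\cten C)\cap(V_n\cten C_n)$ taken inside $V\cten C$. I therefore need the identity
\[
(U\cten C)\cap(V_n\cten C_n)=U_n\cten C_n .
\]
Granting it, $\rho_{V_n}(U_n)\subseteq U_n\cten C_n$ and $U_n$ is a closed $\cten$-subcomodule of $V_n$.

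Establishing this intersection identity is the main obstacle, since completed tensor products do not commute with intersections in general. Here I would argue exactly as in the proof of Proposition \ref{prop:ctenemb}: over the discretely valued field $K$ every Banach space is free, and the closed subspaces $U_n\subseteq V_n$ and $C_n\subseteq C$ are orthocomplemented, so one may choose orthogonal bases of $V_n$ and of $C$ adapted to $U_n$ and to $C_n$. Using Proposition \ref{prop:topten} to write $V\cten C\cong\ilim(V_n\cten C_n)$ and $U\cten C\cong\ilim(U_n\cten C_n)$, the tensor-product orthogonal bases identify $U_n\cten C_n$ with those convergent series in $V_n\cten C_n$ whose coefficients are supported on basis vectors of the form (basis vector of $U_n$)$\,\otimes\,$(basis vector of $C_n$). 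Membership in $V_n\cten C_n$ already restricts the $C$-support to $C_n$, and membership in $U\cten C$ restricts the $V$-support to $U$; the $V_n$-basis vectors lying in $U$ are precisely those of $U_n=U\cap V_n$, which yields the identity.

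Once $U_n$ is known to be a closed subcomodule of $V_n$, the hypothesis that $V_n$ is simple for $n>N$ forces $U_n\in\{0,V_n\}$ for all such $n$. Since $U\neq0$, choose $0\neq u\in U$; then $u\in V_m$ for some $m$, so $U_m\neq0$, and for every $n\geq\max(m,N+1)$ we have $U_n\supseteq U_m\neq0$, whence $U_n=V_n$ by simplicity. As the $V_n$ are increasing with $\bigcup_n V_n=V$, this gives $U\supseteq\bigcup_{n>N}V_n=V$, so $U=V$. Therefore $V$ has no proper nonzero closed $\cten$-subcomodule, i.e. $V$ is simple.
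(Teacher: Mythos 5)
Your argument is correct and is essentially the paper's own proof: pass to $U_n=U\cap V_n$, show each $U_n$ is a closed $C_n$-$\cten$-subcomodule of $V_n$, and conclude from the simplicity of the $V_n$ for $n>N$. You in fact supply more detail than the paper, which merely asserts that $U_n$ is a $C_n$-subcomodule; your intersection identity $\left(U\cten C\right)\cap\left(V_{n}\cten C_{n}\right)=U_{n}\cten C_{n}$ is the right justification, with the one caveat that the orthogonal-basis argument must be run at a fixed Banach level $C_m$, $m\geq n$ (expanding coefficients in the $C$-slot only, so as to avoid needing a single basis adapted simultaneously to two closed subspaces), since $C$ itself is an LS-space and does not literally have an orthogonal basis.
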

\begin{proof}
Let $U\subset V$ be a proper closed $\cten$-subcomodule. Since $V$
is of compact type, $U=\ilim U_{n},$ such that $U_{n}$ are closed
subspaces of $V_{n}.$ Since $U$ and $V_{n}$ are $\cten$-subcomodules
of $V,$ $U_{n}$ are $C_{n}-$$\cten$-subcomodules of $V_{n}.$
Since $U$ is a proper subspace of $V,$ $U_{n}$ must be proper subcomodules
of $V_{n}$ for all $n>N$ for some $N>0$, and this is a contradiction
with the simplicity of $V_{n}$.\end{proof}
\begin{rem}
\cite[lemma 3.9]{ST3} is proved for coadmissible $\cten$-modules
over Fréchet-Stein algebra, i.e. the duals of $V_{n}$ are supposed
to be finitely generated over $\bd{C_{n}}$ and $\bd{C_{n}}$ are
supposed to be Noetherian. These assumptions are not required in our
result. On the other hand, our result is for CT-$\cten$-comodules,
which on the dual side mean a nuclear Fréchet $\cten$-module, and
nuclearity is not required in \cite[lemma 3.9]{ST3}.
\end{rem}

\subsection{Cotensor product}

Let $A$ and $B$ be LS-$\cten$-coalgebras. Let $\left(M,\rho_{M}\right)$
be a right LS $A-$$\cten$-comodule and let $\left(N,{}_{N}\rho\right)$
be a left LS $A$-$\cten$-comodule. Similar to the Banach case, one
can give the following definition.
\begin{defn}
The space $M\ccten AN=Ker\left(\rho_{M}\otimes id_{N}-id_{M}\otimes{}_{N}\rho\right)$
is called cotensor product of $M$ and $N$ over $A.$ 

Since $M\ccten AN$ is a kernel of a continuous map, it is a closed
subspace of $M\cten N$, which is a LS-space, and thus is a LS-space.
It is an equalizer of $\rho_{M}\otimes id_{N}$ and $id_{M}\otimes{}_{N}\rho$.

If $N$ is also a right LS $B$-$\cten$-comodule, then $M\ccten AN$
is also a right LS $B$-$\cten$-comodule.

Since under antiequivalences of categories equalizers and coequalizers
are dual to each other, we have the following result.\end{defn}
\begin{prop}
\label{lem:CTcotenten}$\sd{M\ccten AN}=\sd M\underset{\sd A}{\cten}\sd N$,
$\sd{\sd M\underset{\sd A}{\cten}\sd N}=M\ccten AN$.
\end{prop}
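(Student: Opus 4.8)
The plan is to dualize the equalizer diagram that defines the cotensor product and recognize the outcome as the coequalizer diagram defining $\sd M\cten_{\sd A}\sd N$, using the antiequivalence between LS- and FS-spaces together with the annihilator argument already employed in the Banach case (Proposition \ref{prop:ctenemb}).

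First I would record the defining sequence. By definition $M\ccten AN=\Ker\psi$, where $\psi=\rho_{M}\otimes\id N-\id M\otimes{}_{N}\rho:M\cten N\to M\cten A\cten N$, so that
\[
0\to M\ccten AN\to M\cten N\overset{\psi}{\longrightarrow}M\cten A\cten N
\]
is exact. Since $M$, $N$, $A$ are LS-spaces, so are $M\cten N$ and $M\cten A\cten N$ (Proposition \ref{prop:topten}), and $M\ccten AN$ is a closed subspace, hence again an LS-space. The inclusion is strict automatically, and by the Open Mapping theorem (Theorem \ref{thm:(Open-Mapping-theorem)}) the corestriction $M\cten N\to\compl{\Imm\psi}$ is a strict surjection, so the sequence is strict.

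Next I would apply the strong-dual functor, which is an antiequivalence of LS- and FS-spaces and, by Proposition \ref{pro:Let-V-be-1}, carries strict exact sequences of LS-spaces to strict exact sequences of FS-spaces. Using $\sd{M\cten N}=\sd M\cten\sd N$ and $\sd{M\cten A\cten N}=\sd M\cten\sd A\cten\sd N$ (Proposition \ref{prop:topten}), the adjoint $\psi'$ is computed from the adjoints $\rho_{M}'$ and $({}_{N}\rho)'$, which are precisely the right $\sd A$-action on $\sd M$ and the left $\sd A$-action on $\sd N$; thus $\psi'=\rho_{M}'\cten\id{\sd N}-\id{\sd M}\cten({}_{N}\rho)'$. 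The standard annihilator relation $\ann{\Ker\psi}=\compl{\Imm{\psi'}}$, valid for these reflexive spaces by the same polar-duality computation used in Proposition \ref{prop:ctenemb}, identifies $\ann{M\ccten AN}$ with the closed linear hull of the elements $m'a'\otimes n'-m'\otimes a'n'$. Passing to the quotient then gives
\[
\sd{M\ccten AN}=\left(\sd M\cten\sd N\right)/\compl{\Imm{\psi'}}=\sd M\underset{\sd A}{\cten}\sd N,
\]
which is exactly the definition of the completed tensor product over $\sd A$.

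The hard part is exactly this passage from the purely categorical statement (duality swaps equalizers and coequalizers) to the concrete identification of the coequalizer as the $\cten_{\sd A}$-quotient: one must verify that $\ann{\Ker\psi}$ coincides with the closed image of $\psi'$ and that all maps are strict, so that the dual sequence remains exact. Both ingredients are furnished by the Open Mapping theorem and by reflexivity and nuclearity of the spaces involved, just as in the Banach proof. Finally, the second identity is immediate: $M\ccten AN$ is an LS-space, hence reflexive, so applying the dual functor once more yields $\sd{\sd M\cten_{\sd A}\sd N}=\sd{\sd{M\ccten AN}}=M\ccten AN$.
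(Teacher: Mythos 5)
Your proof is correct and takes essentially the same route as the paper, whose entire proof is the single remark preceding the proposition that the strong-duality antiequivalence between LS- and FS-spaces interchanges equalizers and coequalizers; your write-up just makes that remark concrete via the identification $\ann{\Ker{\psi}}=\compl{\Imm{\psi'}}$ (bipolar/reflexivity) and recovers the second identity from reflexivity of LS-spaces. One inessential inaccuracy: the corestriction $M\cten N\to\compl{\Imm{\psi}}$ need not be surjective (its image is only dense), so calling it a strict surjection via the Open Mapping theorem is not justified --- but nothing in your argument actually uses this, since exactness of the dual sequence is carried entirely by the annihilator step.
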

Similarly one defines induction and restriction functors. Lemma \ref{lem:-is-a}
and Proposition \ref{pro:(Frobenius-reciprocity)} (Frobenius reciprocity)
remain true in LS case.

If $M\simeq\ilim M_{n}$ and $N\simeq\ilim N_{n}$ are CT-$\cten$-comodules,
then $M\cten N=\ilim M_{n}\cten N_{n}$. Since $\rho_{M}$ and $\rho_{N}$
are defined by $\rho_{M_{n}}$ and $\rho_{N_{n}}$, we have $M\ccten AN=\ilim M_{n}\ccten{A_{n}}N_{n}$. 

Similar to the case of Banach $\cten$-comodules, one defines induction
and restriction functors. Frobenius reciprocity and Tensor identity
remain true for CT-$\cten$-comodules.

\section{Admissibility}
\begin{defn}
Let $A=\plim A_{n}$ be an NF-$\cten$-algebra. An NF $\left\{ A_{n}\right\} $-module
$M=\plim M_{n}$ is called $\left\{ A_{n}\right\} $-coadmissible
if 
\begin{enumerate}
\item Each $M_{n}$ is finitely generated;
\item For all $n$ we have a topological isomorphism $M_{n}\cong M\cten_{A}A_{n}$.
\end{enumerate}
\end{defn}
\begin{rem}
Recall the definitions from \cite{EM}. A weak Fréchet-Stein $\cten$-algebra
is a projective limit of locally convex hereditary complete topological
$\cten$-algebras, s.t. transition maps factor through Banach spaces
(BH-maps), thus this notion is slightly more general than our notion
of Fréchet $\cten$-algebra.

A module $M$ over a weak Fréchet-Stein $\cten$-algebra $A$ is called
coadmissible (w.r.t. a fixed nuclear Fréchet structure $A_{n}$ on
$A$) \cite[1.2.8]{EM}, if we have the following data:
\begin{enumerate}
\item a sequence of finitely generated Banach $\cten$-modules $M_{n}$
over $A_{n}$;
\item an isomorphism of Banach $A_{n}$-$\cten$-modules $M_{n+1}\underset{A_{n+1}}{\cten}A_{n}\simeq M_{n};$
\item an isomorphism of topological $A$-$\cten$-modules $M\simeq{\displaystyle \plim M_{n}}$
(projective limit is taken w.r.t. transition maps $M_{n+1}\longrightarrow M_{n},$
induced by (2)).
\end{enumerate}
Condition (3) is already contained in our definition of NF-$\cten$-module.
It is easy to see that our condition (2) imply condition (2) as in
loc. cit. \cite[Cor. 3.1]{ST3} states that over a Fréchet-Stein algebra
$A$ there is an isomorphism $M\underset{A}{\ten}A_{n}\simeq M_{n}$.
Since $M_{n+1}\underset{A_{n+1}}{\cten}A_{n}\simeq M_{n+1}\underset{A_{n+1}}{\ten}A_{n}$,
one can see that $M\underset{A}{\ten}A_{n}\simeq M_{n}$ is equivalent
to the topological isomorphism $M\underset{A}{\cten}A_{n}\simeq M_{n}$
(see also (the proof of) \cite[1.2.11]{EM}). Thus over Fréchet-Stein
$\cten$-algebra these two conditions are equivalent and these two
definitions coincide. In general, our condition (2) is stronger and
results in existence of a duality between $\cten$-modules and $\cten$-comodules.
\end{rem}
It is known that if $M$ is coadmissible w.r.t. one nuclear Fréchet
structure on $A,$ then it is coadmissible w.r.t. any \cite[1.2.9]{EM}. 

We want to study an analog of this notion for $\cten$-comodules.
\begin{defn}
Let $C$ be a CT-coalgebra with CT-structure $\left\{ C_{n}\right\} $.
Let $V$ be a right $\left\{ C_{n}\right\} $-comodule with CT-structure
$\left\{ V_{n}\right\} _{n\in\mathbb{N}}$. Suppose that
\begin{enumerate}
\item $V_{n}$ is a finitely cogenerated Banach right $\cten$-comodule
over $C_{n}$;
\item we have a topological isomorphism of $C_{n}$-$\cten$-comodules $V\ccten CC_{n}\simeq V_{n}$
($C_{n}$ is a left and right $C-$$\cten$-comodule, so we can take
completed cotensor product).
\end{enumerate}
We call $V$ \emph{admissible} w.r.t. CT-structure $\left\{ \left(C_{n},\phi_{n,m}\right)\right\} _{n\in\mathbb{N}}$
(or $\left\{ C_{n}\right\} $-\emph{admissible}). We call $\left\{ V_{n}\right\} _{n\in\mathbb{N}}$
a $\left\{ C_{n}\right\} $\emph{-admissible structure}.
\end{defn}
The second condition essentially means that the only elements $x\in V$,
such that $\rho_{V}\left(x\right)\in V\cten C_{n}$, are the elements
of $V_{n}$.
\begin{prop}
\label{prop:IndAdm}Let $\phi:A\to B$ be a morphism of CT-$\cten$-coalgebras,
given by a morphism of CT-structures $\left\{ \phi_{n}:A_{n}\to B_{n}\right\} $
with CT-structure $\left\{ A_{n}\right\} _{n\in\mathbb{N}}$ for A
and CT-structure $\left\{ B_{n}\right\} _{n\in\mathbb{N}}$ for B.
Then:
\begin{enumerate}
\item \label{enu:propIndAdm1}If $W$ is $\left\{ B_{n}\right\} _{n\in\mathbb{N}}$-admissible
B-comodule, then $W^{\phi}$ is an $\left\{ A_{n}\right\} _{n\in\mathbb{N}}$
-admissible A-comodule;
\item If V is $\left\{ A_{n}\right\} _{n\in\mathbb{N}}$ -admissible A-comodule,
then $V_{\phi}$ is an $\left\{ B_{n}\right\} _{n\in\mathbb{N}}$-admissible
B-comodule.
\end{enumerate}
\end{prop}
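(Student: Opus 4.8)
The plan is to first pin down the compact type structures of the induced comodule $W^{\phi}$ and the restricted comodule $V_{\phi}$, and then to verify the two defining conditions of admissibility for each, reducing every computation to associativity of the cotensor product together with the counit isomorphism and the admissibility isomorphisms handed to us by the hypotheses. For the structures I would invoke the identity $M\ccten AN=\ilim M_{n}\ccten{A_{n}}N_{n}$ for compact type $\cten$-comodules recorded in the cotensor product discussion of Section 4. Since ${}_{\phi}A=\ilim{}_{\phi_{n}}A_{n}$ and $A_{\phi}=\ilim\left(A_{n}\right)_{\phi_{n}}$, this gives the structures $\left(W^{\phi}\right)_{n}=W_{n}\ccten{B_{n}}{}_{\phi_{n}}A_{n}=W_{n}^{\phi_{n}}$ and $\left(V_{\phi}\right)_{n}=\left(V_{n}\right)_{\phi_{n}}$.

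For part (1) the finite cogeneration condition is immediate: each $W_{n}$ is finitely cogenerated over $B_{n}$ because $W$ is $\left\{ B_{n}\right\} $-admissible, so Proposition \ref{prop:IndFinCogen}(1) shows that $W_{n}^{\phi_{n}}=\left(W^{\phi}\right)_{n}$ is finitely cogenerated over $A_{n}$. For the second condition I would compute
\[
W^{\phi}\ccten AA_{n}=\left(W\ccten B{}_{\phi}A\right)\ccten AA_{n}\cong W\ccten B\left({}_{\phi}A\ccten AA_{n}\right),
\]
where the associativity step is unproblematic because the middle factor is the coalgebra $A$ itself. The counit isomorphism (the left-handed companion of the Corollary $M\ccten BB\cong M$ following Lemma \ref{lem:-is-a}) then gives ${}_{\phi}A\ccten AA_{n}\cong{}_{\phi}A_{n}$, the space $A_{n}$ carrying the left $B$-coaction induced by $\phi_{n}$, which lands in $B_{n}\cten A_{n}$. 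At this point the remark following the definition of admissibility, which identifies $\left\{ w\in W:\rho_{W}(w)\in W\cten B_{n}\right\} $ with $W_{n}$, forces the $W$-factor of the defining kernel into $W_{n}$, so that $W\ccten B{}_{\phi}A_{n}\cong W_{n}\ccten{B_{n}}{}_{\phi_{n}}A_{n}=\left(W^{\phi}\right)_{n}$, which is precisely condition (2).

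For part (2) the finite cogeneration condition follows by composing embeddings: an $A_{n}$-comodule embedding $V_{n}\hookrightarrow A_{n}^{k}$ restricts to a $B_{n}$-comodule embedding $\left(V_{n}\right)_{\phi_{n}}\hookrightarrow\left(\left(A_{n}\right)_{\phi_{n}}\right)^{k}$, and the hypothesis that $\left\{ A_{n}\right\} $ is $\left\{ B_{n}\right\} $-admissible supplies an embedding $\left(A_{n}\right)_{\phi_{n}}\hookrightarrow B_{n}^{j}$, whence $\left(V_{\phi}\right)_{n}\hookrightarrow B_{n}^{jk}$. For the second condition I would first rewrite the restriction through the counit isomorphism as $V_{\phi}\cong V\ccten AA_{\phi}$, viewing $A$ as an $(A,B)$-bicomodule, so that associativity yields
\[
V_{\phi}\ccten BB_{n}\cong V\ccten A\left(A_{\phi}\ccten BB_{n}\right).
\]
The admissibility of $A_{\phi}$ gives $A_{\phi}\ccten BB_{n}\cong A_{n}$ and the admissibility of $V$ gives $V\ccten AA_{n}\cong V_{n}$; restricting the right coaction along $\phi_{n}$ then identifies the result with $\left(V_{n}\right)_{\phi_{n}}=\left(V_{\phi}\right)_{n}$, as required.

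The step I expect to be the main obstacle is the bookkeeping of the extra comodule structure through the admissibility isomorphism $A_{\phi}\ccten BB_{n}\cong A_{n}$ in part (2): the hypothesis asserts this only as an isomorphism of right $B_{n}$-comodules, whereas the argument needs it to respect the left $A$-coaction on $A$ as well, so that the outer cotensor $V\ccten A(-)$ is meaningful and reproduces $V\ccten AA_{n}$. This should follow from the fact that the admissibility isomorphism is realized by the canonical counit map $\id{}\bten\epsilon$ of Lemma \ref{lem:-is-a}, which is a morphism of $(A,B)$-bicomodules; checking this compatibility, together with the parallel descent of the left $B$-structure in part (1), is where the genuine care is needed, the surrounding associativity and counit manipulations being the benign case in which the middle tensor factor is the coalgebra itself.
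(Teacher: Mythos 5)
Your proposal is correct and follows essentially the same route as the paper: for the induced comodule, finite cogeneration via Proposition \ref{prop:IndFinCogen} and the chain $W^{\phi}\ccten AA_{n}\cong W\ccten B\left({}_{\phi}A\ccten AA_{n}\right)\cong W\ccten B{}_{\phi}A_{n}\cong W_{n}^{\phi_{n}}$; for the restricted comodule, composition of embeddings and the chain $V_{\phi}\ccten BB_{n}\cong V\ccten A\left(A_{\phi}\ccten BB_{n}\right)\cong V\ccten A\left(A_{n}\right)_{\phi}\cong\left(V_{n}\right)_{\phi}$. The only difference is that you explicitly flag the bicomodule compatibility of the admissibility isomorphism $A_{\phi}\ccten BB_{n}\cong\left(A_{n}\right)_{\phi}$ needed for the outer cotensor $V\ccten A\left(-\right)$, a point the paper's proof uses silently.
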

\begin{proof}
In both cases we have to check the three conditions of admissibility.
\begin{itemize}
\item $W^{\phi}$ case: 

First note that $W^{\phi}=W\ccten B{}_{\phi}A$ is a closed subspace of the LS-space $W\cten A$ and thus is the LS-space itself. So we have a topological isomorphism $W\ccten B{}_{\phi}A\cong\ilim W_{n}\ccten{B_{n}}{}_{\phi_n}A_{n}\cong\ilim{W_{n}^{\phi_n}} $ and thus $W^{\phi}$ is a CT-comodule w.r.t. $A_{n}$. Lets prove that ${W_{n}^{\phi_n}}$ is $\left\{A_{n}\right\}$-admissible structure.
\begin{enumerate}
\item follows from Proposition \ref{prop:IndFinCogen}.
\item First note that $W_n\ccten {B_n}{}\left(_{\phi_n}A_n\right)\cong W\ccten {B}{}\left(_{\phi}A_n\right)$. Indeed, since $W$ is $\left\{ B_{n}\right\} _{n\in\mathbb{N}}$-admissible, we have the above isomorphism as equality of sets. \cite[11.1.2]{PGSch}
imply that topologies also coinside.

Now we have
\[
W^{\phi}\ccten AA_{n}\cong \left(W\ccten B{}_{\phi}A\right)\ccten AA_{n}\cong \left(W\ccten B{}_{\phi}A\right)\ccten {A_n}A_{n}\cong \left(W\ccten B{}_{\phi}A_n\right)\ccten {A_n}A_{n}\cong
\]
\[
\cong {W_n^{\phi_n}}\ccten A{}A_{n}\cong {W_n^{\phi_n}}\ccten {A_n}A_{n}\cong W_{n}^{\phi_n}.
\]

\end{enumerate}
\item $V_{\phi}$ case:

The fact that $V_{\phi}$ is a CT-comodule is obvious.

\begin{enumerate}
\item if $V_{n}$ is embedded into $A_{n}^{k}$ and $A_{n}$ is embedded
into $B_{n}^{m},$ then we have an embedding $V_{n}\hookrightarrow B_{n}^{km}.$
\item we have $V\ccten AA_{n}\cong V_{n}$. Then 
\[
\left(V\right)_{\phi}\ccten BB_{n}\cong\left(V\ccten AA\right)_{\phi}\ccten BB_{n}\cong\left(V\ccten AA_{\phi}\right)\ccten BB_{n}\cong\left(V\ccten AA_{\phi}\right)\ccten {B_n}B_{n}\cong
\]
\[
\cong\left(V\ccten A\left(A_n\right)_{\phi_n}\right)
\cong\left(V\ccten AA_n\right)_{\phi_n}\cong\left(V_{n}\right)_{\phi_n}.
\]
\end{enumerate}
\end{itemize}
\end{proof}
\begin{cor}
If a CT-$\cten$-comodule is admissible w.r.t. one CT-structure, then
it is admissible w.r.t. any.\end{cor}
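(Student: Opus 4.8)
The plan is to realize the identity of $C$ as a morphism of CT-structures and then invoke Proposition~\ref{prop:IndAdm}(\ref{enu:propIndAdm1}), exploiting the fact that induction along the identity is trivial. Let $\left\{C_n\right\}$ and $\left\{D_m\right\}$ be two CT-structures on $C$ and suppose $V$ is $\left\{C_n\right\}$-admissible; I want to conclude that $V$ is $\left\{D_m\right\}$-admissible. The strategy is to build a morphism of CT-structures $\left\{D_m\to C_{\sigma(m)}\right\}$ representing $\id C$ and transport admissibility across it.

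First I would produce, for each $m$, a Banach $\cten$-coalgebra morphism $\beta_m:D_m\to C_{\sigma(m)}$ factoring the embedding $D_m\emb C$. This is where compact type is used: $C\cong\ilim C_n$ is a regular inductive limit, so the bounded image of the unit ball of $D_m$ under $D_m\to C$ lies in some step $C_{\sigma(m)}$; thus $D_m\emb C$ factors as $D_m\xrightarrow{\beta_m}C_{\sigma(m)}\emb C$, and $\beta_m$ is automatically a coalgebra morphism because the structures on $D_m$ and on $C_{\sigma(m)}$ are the ones induced from $C$. Choosing $\sigma$ strictly increasing, the maps $\beta_m$ are compatible with the transition maps: both $\beta_{m+1}\circ(\text{transition of }D)$ and $(\text{transition of }C)\circ\beta_m$ become the embedding $D_m\emb C$ after composing with the injection $C_{\sigma(m+1)}\emb C$, so they coincide. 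Hence $\left\{\beta_m:D_m\to C_{\sigma(m)}\right\}$ is a morphism of CT-structures whose colimit is $\id C$.

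Next I would observe that $V$ is admissible with respect to the cofinal subsystem $\left\{C_{\sigma(m)}\right\}$ of $\left\{C_n\right\}$: if $\left\{V\ccten C C_n\right\}$ is a $\left\{C_n\right\}$-admissible structure for $V$, then its subsystem $\left\{V\ccten C C_{\sigma(m)}\right\}$ consists of finitely cogenerated Banach comodules, forms a compact inductive system, and still has inductive limit $V$, so it is a $\left\{C_{\sigma(m)}\right\}$-admissible structure. Now I apply Proposition~\ref{prop:IndAdm}(\ref{enu:propIndAdm1}) to $\phi=\id C$, viewed as the morphism from $C$ with structure $\left\{D_m\right\}$ to $C$ with structure $\left\{C_{\sigma(m)}\right\}$ given by $\left\{\beta_m\right\}$, and to $W=V$: since $V$ is $\left\{C_{\sigma(m)}\right\}$-admissible, the proposition gives that $V^{\phi}=V\ccten C {}_{\phi}C$ is $\left\{D_m\right\}$-admissible. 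But induction along the identity is trivial, $V^{\phi}=V\ccten C C\cong V$ by the corollary $M\ccten BB\cong M$, so $V$ itself is $\left\{D_m\right\}$-admissible.

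The main obstacle is the first step, namely turning the two a priori unrelated CT-structures into an honest, transition-compatible morphism of inductive systems realizing $\id C$; once this factoring is in place the argument is a direct application of Proposition~\ref{prop:IndAdm} together with the triviality of induction along the identity. It is worth noting that, arranged this way, the proof uses only the easy passage from a CT-structure to a cofinal subsystem and never the reverse, since the full target sequence $\left\{D_m\right\}$ sits on the source side of $\phi$ and is returned intact by the conclusion of Proposition~\ref{prop:IndAdm}.
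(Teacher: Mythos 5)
Your proposal is correct and takes essentially the same route as the paper: the paper likewise realizes the identity of the coalgebra as a morphism of CT-structures from the ``new'' structure $\left\{ A_{n}\right\} $ into a cofinal subsystem $\left\{ B_{\psi\left(n\right)}\right\} $ of the structure where admissibility is known, notes admissibility passes to that cofinal subsystem, and then applies Proposition \ref{prop:IndAdm}.\ref{enu:propIndAdm1} together with the isomorphism $M^{\Psi}\simeq M$. The only difference is presentational: your justification of the factoring maps via regularity of the compact inductive limit is detail the paper delegates to its earlier lemma on the equivalence of CT-structures with injective transition maps.
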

\begin{proof}
Similar to \cite[1.2.9]{EM}. Let $\left\{ A_{n}\right\} _{n\in\mathbb{N}}$
and $\left\{ B_{n}\right\} _{n\in\mathbb{N}}$ be two different CT-structures
on a CT-$\cten$-coalgebra $A$ and let $M$ be $\left\{ B_{n}\right\} $-admissible
with admissible structure $\left\{ M_{n}\right\} $. Let $\psi:\mathbb{N}\to\mathbb{N}$
be the increasing map giving the morphisms $\Psi:\left\{ A_{n}\right\} \to\left\{ B_{n}\right\} $,
$\Psi\left(n\right):A_{n}\to B_{\psi\left(n\right)}$. Then we have
an automorphism $\Psi:A\to A$, the system $\left\{ B_{\psi\left(n\right)}\right\} $
is also a CT-structure, $M$ is $\left\{ B_{\psi\left(n\right)}\right\} $-admissible
and $M^{\Psi}\simeq M$ is $\left\{ A_{n}\right\} $-admissible by
Proposition \ref{prop:IndAdm}.\ref{enu:propIndAdm1}.
\end{proof}
Next we show that coadmissible $\cten$-modules and admissible $\cten$-comodules
are dual to each other.
\begin{thm}
Let $C\cong\ilim C_{n}$ be a CT-$\cten$-coalgebra. If $\left\{ V_{n}\right\} $
is an $\left\{ C_{n}\right\} $-admissible structure for CT-$\cten$-comodule
$V$ over $C$, then $\sd{V_{n}}$ is a $\left\{ \sd{C_{n}}\right\} $-coadmissible
structure for the NF-module $\sd V$ over $\sd C$.\end{thm}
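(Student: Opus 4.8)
The plan is to verify directly that $\left\{ \sd{V_{n}}\right\} $ meets the two defining conditions of an $\left\{ \sd{C_{n}}\right\} $-coadmissible structure for $\sd V$, transporting the two admissibility conditions for $\left\{ V_{n}\right\} $ through the antiequivalence $\rctcomod C\sim\rnfmod{\sd C}$ (established as in Proposition \ref{prop:DualNFisCT}). First I would record the ambient structures. Since $C\cong\ilim C_{n}$ is a CT-$\cten$-coalgebra, its strong dual $\sd C\cong\plim\sd{C_{n}}$ is an NF-$\cten$-algebra with NF-structure $\left\{ \sd{C_{n}}\right\} $, so here $A=\sd C$ and $A_{n}=\sd{C_{n}}$. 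Dually, because $V\cong\ilim V_{n}$ is a CT-$\cten$-comodule with compact transition maps, $\sd V\cong\plim\sd{V_{n}}$ is an NF-$\cten$-module over $\sd C$, with each $\sd{V_{n}}$ the Banach $\sd{C_{n}}$-module dual to the Banach $C_{n}$-comodule $V_{n}$ and with transition maps dual (hence still compact) to those of $V$. Thus $\left\{ \sd{V_{n}}\right\} $ is already an $\left\{ \sd{C_{n}}\right\} $-structure on $\sd V$, and it remains only to check the two numbered conditions.

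For finite generation I would dualize the first admissibility condition. Since $V_{n}$ is finitely cogenerated, there is a closed $\cten$-comodule embedding $V_{n}\emb C_{n}^{k}$ for some $k$. Dualizing this $\cten$-comodule morphism yields a $\sd{C_{n}}$-module morphism $(\sd{C_{n}})^{k}\cong\sd{(C_{n}^{k})}\to\sd{V_{n}}$, which is surjective by Hahn--Banach, since every continuous functional on the closed subspace $V_{n}$ extends to $C_{n}^{k}$; here I use that the dual of the coordinatewise comodule $C_{n}^{k}$ is the free rank-$k$ module $(\sd{C_{n}})^{k}$. Hence $\sd{V_{n}}$ is a quotient of a free finitely generated $\sd{C_{n}}$-module, so it is finitely generated, giving condition (1).

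For the second condition I would dualize the second admissibility condition using the cotensor--tensor duality of Proposition \ref{lem:CTcotenten}. The topological $C_{n}$-comodule isomorphism $V\ccten C C_{n}\simeq V_{n}$ dualizes to a topological $\sd{C_{n}}$-module isomorphism $\sd{V_{n}}\simeq\sd{V\ccten C C_{n}}$, and Proposition \ref{lem:CTcotenten} identifies $\sd{V\ccten C C_{n}}=\sd V\underset{\sd C}{\cten}\sd{C_{n}}$. Combining the two gives $\sd{V_{n}}\simeq\sd V\cten_{\sd C}\sd{C_{n}}$, which is exactly the coadmissibility condition $M_{n}\cong M\cten_{A}A_{n}$ with $M=\sd V$ and $A=\sd C$. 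This establishes condition (2), and the two conditions together finish the proof.

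Once Proposition \ref{lem:CTcotenten} is in hand the argument is essentially bookkeeping, and I expect the only real obstacle to be topological rather than algebraic: ensuring that each displayed isomorphism is a genuine topological isomorphism of the relevant modules and not merely a continuous algebraic bijection. For this I would invoke the Open Mapping theorem (Theorem \ref{thm:(Open-Mapping-theorem)}) for the Banach spaces $\sd{V_{n}}$, and check that the antiequivalence carries the right $C_{n}$-comodule structure on $V\ccten C C_{n}$ to precisely the right $\sd{C_{n}}$-module structure on $\sd V\cten_{\sd C}\sd{C_{n}}$, so that the module structures, and not just the underlying spaces, agree on both sides.
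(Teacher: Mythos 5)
Your handling of condition (1) is fine (and matches what the paper dismisses as trivial), but your proof of condition (2) has a genuine gap: Proposition \ref{lem:CTcotenten} does not apply to $V\ccten{C}C_{n}$. That proposition is stated, and proved via the LS--FS antiequivalence (under which equalizers dualize to coequalizers), for a cotensor product of two \emph{LS}-comodules over an LS-coalgebra. Here the right factor $C_{n}$ is a Banach space, and an infinite-dimensional Banach space is never an LS-space (its unit ball is bounded but not compactoid). Consequently $V\cten C_{n}$ is only an LB-space, not an LS-space, and $V\ccten{C}C_{n}$ need not even be an LB-space --- the paper's introduction singles out exactly this as the central technical obstruction of the whole theory. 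So the identity $\sd{V\ccten{C}C_{n}}\cong\sd V\cten_{\sd C}\sd{C_{n}}$, which is essentially the entire content of condition (2), cannot be obtained by citing that proposition; it is precisely what must be proved.

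The paper instead dualizes the defining kernel sequence by hand, and each step needs an input your plan does not supply. First, one needs $\sd{V\cten C_{n}}\cong\sd V\cten\sd{C_{n}}$ for $V$ an LS-space and $C_{n}$ Banach; this is the nontrivial appendix result, Proposition \ref{prop:topten}.\ref{enu:-and-CTBten}, not the nuclearity duality valid for pairs of LS-spaces. Second, since $\phi:V\ccten{C}C_{n}\to V\cten C_{n}$ is a closed embedding, $\bd{\phi}$ is surjective, but its kernel is the \emph{{*}-weak} closure of the elements $v'c'\ten c_{n}'-v'\ten c'c_{n}'$, whereas $\sd V\cten_{\sd C}\sd{C_{n}}$ is the quotient by the \emph{strong} closure of their span; these closures may differ, so one only gets a continuous surjection $\sd V\cten_{\sd C}\sd{C_{n}}\to\sd{V\ccten{C}C_{n}}$, and injectivity is not automatic. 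The paper extracts injectivity from the Banach-level embedding of Proposition \ref{prop:ctenemb}, and only then does the Open Mapping theorem (Theorem \ref{thm:(Open-Mapping-theorem)}) upgrade the continuous bijection to a topological isomorphism, using that $\sd{V_{n}}$ is Banach and $\sd V\cten_{\sd C}\sd{C_{n}}$ is a quotient of a Fr\'echet space. Your final paragraph anticipates the Open Mapping step, but that step is the last and easiest one; the two substantive inputs that make the bijection exist at all are missing from your argument.
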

\begin{proof}
Let $\left\{ V_{n}\right\} $ be $\left\{ C_{n}\right\} $-admissible
structure on $V$. We have $V_{n}\cong V\ccten CC_{n}$, which is
by definition the kernel\begin{center}
\begin{tikzpicture}[auto]    
\node (o) {$0$};   
\node (c) [right= and 0.3cm of o] {$V\ccten C C_{n}$};   
\node (cc) [right= and 0.7cm of c] {$V\cten C_{n}$};   
\node (ccc) [right= and 0.7cm of cc] {$V\cten C\cten C_{n}$,};   
\draw[->] (o) to node {} (c);
\draw[->] (c) to node {$\phi$} (cc);
\draw[->] (cc) to node {$\psi$} (ccc);
\end{tikzpicture} 
\end{center}where $\psi=\rho_{V}\cten\id{C_{n}}-\id V\cten\rho_{C_{n}}$.

Taking duals gives us the following sequence\begin{center}
\begin{tikzpicture}[auto]    
\node (o) {$0$};   
\node (c) [right= and 0.3cm of o] {$\sd{V\ccten C C_{n}}$};   
\node (cc) [right= and 0.6cm of c] {$\sd{V\cten C_{n}}$};   
\node (ccc) [right= and 0.6cm of cc] {$\sd{V\cten C\cten C_{n}}$.};   
\draw[->] (c) to node[swap] {} (o);
\draw[->] (cc) to node[swap] {$\phi'$} (c);
\draw[->] (ccc) to node[swap] {$\psi'$} (cc);
\end{tikzpicture} 
\end{center}

By proposition \ref{prop:topten}.\ref{enu:-and-CTBten} this sequence
is equal to \begin{center}
\begin{tikzpicture}[auto]    
\node (o) {$0$};   
\node (c) [right= and 0.3cm of o] {$\sd{V\ccten C C_{n}}$};   
\node (cc) [right= and 0.5cm of c] {$\sd{V}\cten \sd{C_{n}}$};   
\node (ccc) [right= and 0.5cm of cc] {$\sd{V}\cten \sd{C}\cten \sd{C_{n}}$};   
\draw[->] (c) to node[swap] {} (o);
\draw[->] (cc) to node[swap] {$\phi'$} (c);
\draw[->] (ccc) to node[swap] {$\psi'$} (cc); 
\end{tikzpicture} 
\end{center}and $\psi'=m_{\sd V}\cten\id{\bd{C_{n}}}-\id{\sd V}\cten m_{\bd{C_{n}}}$.

Since $\phi$ is an embedding, $\bd{\phi}$ is surjective and its
kernel is a {*}-weak closure of the elements of the form $v'c'\otimes c_{n}'-v'\otimes c'c_{n}'$.
Thus we have a linear surjection $\sd V\cten_{\sd C}\sd{C_{n}}\overset{\sim}{\longrightarrow}\sd{V\ccten CC_{n}}$,
which is continuous by the universal property of the complete tensor
product. The embeddings $\bd{V_{m}}\cten_{\bd{C_{m}}}\bd{C_{n}}\emb\bd{\left(V_{m}\ccten{C_{m}}C_{n}\right)}$,
from proposition \ref{prop:ctenemb}, give rise to the embedding $\sd V\cten_{\sd C}\sd{C_{n}}\emb\sd{V\ccten CC_{n}}$
and thus the above surjection is a continuous isomorphism.

Since $\sd{V\ccten CC_{n}}\cong\sd{V_{n}}$ is, in fact, a Banach
space and $\sd V\cten_{\sd C}\sd{C_{n}}$ is a quotient of Fréchet
space $\sd V\cten\sd{C_{n}}$, by Open Mapping theorem \ref{thm:(Open-Mapping-theorem)},
we have the following topological isomorphism 
\[
\sd{V_{n}}\cong\sd{V\ccten CC_{n}}\cong\sd V\cten_{\sd C}\sd{C_{n}},
\]
which is the 2nd axiom of coadmissibility. The check of the axiom
1 is trivial.
\end{proof}
Before proving the inverse statement we prove an auxiliary result.
\begin{lem}
\label{lem:lemop}Let $f:V\to U$ be an open surjective morphism of
LCTVS. Then we have a continuous isomorphism $\sd U\simeq\ann{\ker f}$.\end{lem}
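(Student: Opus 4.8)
The plan is to realise the transpose $f'\colon\sd U\to\sd V$, $\psi\mapsto\psi\circ f$, as a continuous linear bijection onto $\ann{\ker f}$ (equipped with the subspace topology from $\sd V$), which is what the asserted continuous isomorphism amounts to. First I would record that $f'$ is continuous for the strong topologies: since $f$ is continuous it carries bounded subsets of $V$ into bounded subsets of $U$, so the $f'$-preimage of the polar of a bounded set $B\subseteq V$ is the polar of the bounded set $f(B)\subseteq U$, a strong neighbourhood of $0$ in $\sd U$. Injectivity of $f'$ is immediate from surjectivity of $f$, for if $\psi\circ f=0$ then $\psi$ vanishes on $f(V)=U$; and the inclusion $f'(\sd U)\subseteq\ann{\ker f}$ is clear, since $\psi\bigl(f(x)\bigr)=0$ for every $x\in\ker f$.

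The substantive point is the reverse inclusion $\ann{\ker f}\subseteq f'(\sd U)$, and this is exactly where the openness hypothesis is used. I would factor $f=\bar f\circ\pi$ through the quotient map $\pi\colon V\to V/\ker f$. The induced bijection $\bar f\colon V/\ker f\to U$ is continuous by the universal property of the quotient topology, and it is open because $f$ is open and $\pi$ is a surjective quotient map (for open $O\subseteq V/\ker f$ one has $\bar f(O)=f(\pi^{-1}(O))$, which is open); hence $\bar f$ is a topological isomorphism. Now given $\phi\in\ann{\ker f}$, it vanishes on $\ker f$ and therefore descends to a linear functional $\bar\phi$ on $V/\ker f$ with $\bar\phi\circ\pi=\phi$, and $\bar\phi$ is continuous again by the universal property of the quotient. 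Setting $\psi:=\bar\phi\circ\bar f^{-1}\in\sd U$ gives $f'(\psi)=\psi\circ f=(\bar\phi\circ\bar f^{-1})\circ(\bar f\circ\pi)=\bar\phi\circ\pi=\phi$, so $\phi$ lies in the image.

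Combining the two paragraphs, $f'$ is a continuous linear bijection $\sd U\to\ann{\ker f}$, giving the stated continuous isomorphism. The step I would treat most carefully is the surjectivity onto $\ann{\ker f}$, since it rests on upgrading $\bar f$ to a \emph{topological} isomorphism via openness: without it $\bar f^{-1}$ need not be continuous and the functional $\psi$ would fail to be bounded. I would also flag that the genuine obstacle to bi-continuity of $f'$ in full LCTVS generality is the lifting of bounded subsets of $U$ to bounded subsets of $V$ through $f$; when $V$ and $U$ are spaces to which the Open Mapping theorem~\ref{thm:(Open-Mapping-theorem)} applies (such as $\mathrm{FS}$-spaces or their subquotients, which is the setting of the intended application), this lifting is automatic and the continuous bijection is promoted to a topological isomorphism.
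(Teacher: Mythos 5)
Your proposal is correct and is essentially the paper's own argument: both identify $\ann{\ker f}$ with the image of the transpose $f'$ by descending a functional $\phi\in\ann{\ker f}$ along $f$ to a functional on $U$, with the openness of $f$ being exactly what makes the descended functional continuous. The only difference is presentational: you factor $f$ through the quotient $V/\ker f$ and invoke universal properties together with the fact that a continuous open bijection is a topological isomorphism, whereas the paper verifies directly that the induced functional is continuous by noting that its preimage of an open set $O\subset K$ equals $f\left(\phi^{-1}\left(O\right)\right)$, which is open.
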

\begin{proof}
We have a dual map $\bd f:\sd U\to\sd V$, which is continuous and,
clearly, $im\left(\bd f\right)\subset\ann{\ker f}$. Consider a map
$\bar{f}:\ann{\ker f}\to\sd U$ defined as $\bar{f}\left(\phi\right)\left(u\right)=\phi\left(v\right)$
for $u=f\left(v\right)$. If $u=f\left(v'\right)$, then $v-v'\in\ker f$
and $\bar{f}\left(\phi\right)\left(0\right)=\phi\left(v-v'\right)=0$.
Thus $\bar{f}\left(\phi\right)$ and the map $\bar{f}$ are well defined.
For an open set $U\subset K$, $\bar{f}\left(\phi\right)^{-1}\left(U\right)=f\left(\phi^{-1}\left(U\right)\right)$
is open, since $\phi$ is continuous and $f$ is open. Thus $\bar{f}\left(\phi\right)\in\sd U$.
Clearly $\bar{f}$ and $\bd f$ are inverse to each other and thus
$\bd f$ is a bijection onto $\ann{\ker f}$.\end{proof}
\begin{thm}
Let $A\cong\plim A_{n}$ be an NF-$\cten$-algebra and $M\cong\plim M_{n}$
is a coadmissible A-$\cten$-module with $\left\{ A_{n}\right\} $-coadmissible
structure $\left\{ M_{n}\right\} $. Then $\sd M$ is an admissible
CT-$\cten$-comodule over the CT-$\cten$-coalgebra $\sd A$ with
$\left\{ \bhd{A_{n}}\right\} $-admissible structure $\left\{ \bhd{M_{n}}\right\} $.\end{thm}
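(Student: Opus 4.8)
The plan is to reverse the dualization of the preceding theorem, exhibiting $\{\bhd{M_n}\}$ as an admissible structure on $V:=\sd M$ and verifying the two axioms. First I would fix the ambient objects: by Proposition \ref{nfdualct} the dual $C:=\sd A$ is a CT-$\cten$-coalgebra with CT-structure $C_n:=\bhd{A_n}$, and under the antiequivalence $\rctcomod C\sim\rnfmod{\sd C}$ the space $V=\sd M$ is a CT-$\cten$-comodule over $C$. Each $\bhd{M_n}$ is a Banach $\bhd{A_n}$-$\cten$-comodule by the dual-comodule construction (the second part of Proposition \ref{prop:hdcom}), and I would check that $\sd M=\ilim\bhd{M_n}$ by the same computation that gives $\ilim\bhd{A_n}=\sd A$ in Proposition \ref{nfdualct}: writing $\bhd{M_n}=(\bd{m_{M_n}})^{-1}(\sd{M_n}\cten\bhd{A_n})$, the inductive limit is $(\bd{m_M})^{-1}(\sd M\cten\sd A)=\sd M$, since the coaction always lands in $\sd M\cten\sd A=\sd{M\cten A}$. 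This makes $\{\bhd{M_n}\}$ a CT-structure on $V$.

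The first axiom is immediate from coadmissibility: each $M_n$ is finitely generated over $A_n$, so the third part of Proposition \ref{prop:IndFinCogen} shows that $\bhd{M_n}$ is finitely cogenerated over $\bhd{A_n}$.

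For the second axiom I must produce a topological isomorphism $\sd M\ccten{\sd A}\bhd{A_n}\cong\bhd{M_n}$, and I would obtain this in two stages. In the coarse stage I dualize the coadmissibility isomorphism $M_n\cong M\cten_A A_n$: the quotient map $q:M\cten A_n\to M_n$ is open surjective, so Lemma \ref{lem:lemop} gives $\sd{M_n}\cong\ann{\ker q}$, and since $\ker q$ is the closed span of the elements $ma\cten a_n-m\cten\pi_n(a)a_n$, its annihilator inside $\sd{M\cten A_n}=\sd M\cten\bd{A_n}$ (Proposition \ref{prop:topten}.\ref{enu:-and-CTBten}) is exactly the cotensor product $\sd M\ccten{\sd A}\bd{A_n}$, where $\bd{A_n}$ carries the $\sd A$-comodule structures dual to the $A$-actions on $A_n$. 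Thus $\sd{M_n}\cong\sd M\ccten{\sd A}\bd{A_n}$ as topological comodules. In the fine stage I use the second part of Proposition \ref{prop:hdcom}, $\bhd{M_n}\cong\sd{M_n}\ccten{\sd A}\bhd{A_n}$, together with the inclusion $\bhd{A_n}\emb\bd{A_n}$: I would first verify that the $\sd A$-coaction on the subcoalgebra $C_n=\bhd{A_n}$ inherited from $C=\sd A$ coincides with the restriction of the coaction on $\bd{A_n}$, so that $\bhd{A_n}\emb\bd{A_n}$ is a comodule morphism and hence $\sd M\ccten{\sd A}\bhd{A_n}=(\sd M\cten\bhd{A_n})\cap(\sd M\ccten{\sd A}\bd{A_n})$ as closed subspaces of $\sd M\cten\bd{A_n}$.

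I expect the heart of the proof, and the main obstacle, to be the identification, under the coarse isomorphism $\sd M\ccten{\sd A}\bd{A_n}\cong\sd{M_n}$, of the closed subspace $\sd M\ccten{\sd A}\bhd{A_n}$ with the dual comodule $\bhd{M_n}\subseteq\sd{M_n}$. The difficulty is that $\bhd{(-)}$ and $\sd{(-)}$ are not mutually inverse, so one cannot simply feed $V_n=\bhd{M_n}$ into the preceding theorem; instead one must show that cutting $\bd{A_n}$ down to the subcoalgebra $\bhd{A_n}$ in the cotensor product cuts $\sd{M_n}$ down to precisely those elements whose $\sd A$-coaction lands in $\sd{M_n}\cten\bhd{A_n}$, which is the defining property of $\bhd{M_n}$. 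This rests on the characterization of $\bhd{A_n}$ as the elements of $\bd{A_n}$ whose coaction lands in $\bd{A_n}\cten\bhd{A_n}$, transported through the (non-simple) tensors of the cotensor product; once the underlying subspaces are matched, the topological statement is free, since a topological isomorphism restricts to a topological isomorphism on closed subspaces. The first axiom and the coarse stage are routine given Lemma \ref{lem:lemop} and the Open Mapping theorem \ref{thm:(Open-Mapping-theorem)}; it is this reconciliation of the two notions of dual that carries the real content.
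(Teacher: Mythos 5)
Your proposal is correct and follows essentially the same route as the paper: the coarse stage (dualizing the presentation of $M\cten_{A}A_{n}$ and using Lemma \ref{lem:lemop}, Proposition \ref{prop:topten}.\ref{enu:-and-CTBten} and the Open Mapping theorem \ref{thm:(Open-Mapping-theorem)} to obtain $\sd{M_{n}}\cong\sd M\ccten{\sd A}\sd{A_{n}}$), the fine stage (cutting the cotensor product down to $\bhd{A_{n}}$ and invoking Proposition \ref{prop:hdcom} to identify $\sd M\ccten{\sd A}\bhd{A_{n}}\cong\bhd{M_{n}}$), and the first axiom via finite cogeneration of dual comodules are exactly the paper's steps. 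The only difference is presentational: where you phrase the fine stage as an intersection inside $\sd M\cten\bd{A_{n}}$, the paper writes the chain $\sd M\ccten{\sd A}\bhd{A_{n}}\cong\sd M\ccten{\sd A}\sd{A_{n}}\ccten{\sd{A_{n}}}\bhd{A_{n}}\cong\sd{M_{n}}\ccten{\sd{A_{n}}}\bhd{A_{n}}\cong\bhd{M_{n}}$, resting on the same Proposition \ref{prop:hdcom}.
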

\begin{proof}
Consider the defining exact sequence of Fréchet spaces for $M\cten_{A}A_{n}\cong M_{n}$,
\begin{center}
\begin{tikzpicture}[auto]    
\node (o) {$0$.};   
\node (c) [left= and 0.3cm of o] {$M\cten_A A_{n}$};   
\node (cc) [left= and 1cm of c] {$M\cten A_{n}$};   
\node (ccc) [left= and 1cm of cc] {$M\cten A\cten A_{n}$};   
\draw[->] (c) to node {} (o);
\draw[->] (cc) to node {$\phi$} (c);
\draw[->] (ccc) to node {$\psi$} (cc);
\end{tikzpicture} 
\end{center}where $\psi=m_{M}\cten\id{A_{n}}-\id M\cten m_{A_{n}}$. Taking duals
and applying proposition \ref{prop:topten}.\ref{enu:-and-CTBten}
gives us the sequence \begin{center}
\begin{tikzpicture}[auto]    
\node (o) {$0$.};   
\node (c) [left= and 0.3cm of o] {$\sd{M\cten_A A_{n}}$};   
\node (cc) [left= and 0.5cm of c] {$M_b'\cten \sd{A_{n}}$};   
\node (ccc) [left= and 0.5cm of cc] {$M_b'\cten A_b'\cten \sd{A_{n}}$};   
\draw[->] (o) to node {} (c);
\draw[->] (c) to node[swap] {$\phi'$} (cc);
\draw[->] (cc) to node[swap] {$\psi'$} (ccc);
\end{tikzpicture} 
\end{center}with $\psi'=\rho_{M_{b}'}\cten\id{\sd{A_{n}}}-\id{M_{b}'}\cten\rho_{\sd{A_{n}}}$.
By definition, $M\cten_{A}A_{n}=M\cten A_{n}/\compl I$, where $\compl I$
is the closure of the linear hull of the set of elements of the form
$mc\otimes a-m\otimes ca$. By lemma \ref{lem:lemop}, $\sd{M\cten_{A}A_{n}}\simeq\ann{\compl I}$.
For any linear map $f:V\to U$ we have $\ker\bd f=\ann{im\left(f\right)}=\ann{\compl{im\left(f\right)}}$.
Since $\compl I$ is the closure of the image of the map $m_{M}\cten\id{A_{n}}-\id M\cten m_{A_{n}}:M\cten A\cten A_{n}\to M\cten A_{n}$,
we have $\sd{M\cten_{A}A_{n}}\simeq\ann{\compl I}=\ker\bd{\left(m_{M}\cten\id{A_{n}}-\id M\cten m_{A_{n}}\right)}$
and since 
\[
\bd{\left(m_{M}\cten\id{A_{n}}-\id M\cten m_{A_{n}}\right)}=\rho_{\sd M}\cten\id{\sd{A_{n}}}-\id{\sd M}\cten\rho_{\sd{A_{n}}},
\]
we get a continuous isomorphism $\sd{M\cten_{A}A_{n}}\simeq\sd M\ccten{\sd A}\sd{A_{n}}$.
Since $\sd{M_{n}}\cong\sd{M\cten_{A}A_{n}}$ is a Banach space and
$\sd M\cten\sd{A_{n}}$ is an LB-space (in fact, regular LB-space),
$\bd{\phi}\left(\sd{M\cten_{A}A_{n}}\right)$ is embedded into $\sd{M_{k}}\cten\sd{A_{n}}$
for some $k$. Thus both $\sd{M\cten_{A}A_{n}}$ and $\sd M\ccten{\sd A}\sd{A_{n}}$
are Banach spaces and by Open Mapping theorem \ref{thm:(Open-Mapping-theorem)},
we have the topological isomorphism $\sd{M\cten_{A}A_{n}}\cong\sd M\ccten{\sd A}\sd{A_{n}}$.
Note that, since $A_{n}$ is a Banach $A-A$-$\cten$-bimodule (via
projective limit structure map $\psi_{n}:A\to A_{n}$), $\sd{A_{n}}$
is a Banach $\sd A-\sd A$-$\cten$-bicomodule by proposition \ref{prop:topten}.\ref{enu:-and-CTBten}.

To prove the claim, first we remark that $\bhd{A_{n}}$ is also a
Banach $\sd A-\sd A$-$\cten$-bicomodule, since $\sd A=\ilim\bhd{A_{n}}$.
Consider $N_{n}=\sd M\ccten{\sd A}\bhd{A_{n}}$. Each $\sd M\ccten{\sd A}\bhd{A_{n}}$
is a closed subspace of $\sd M\ccten{\sd A}\sd{A_{n}}$, and thus
$N_{n}$ is a closed subspace of $\sd{M\cten_{A}A_{n}}\cong\sd{M_{n}}$.
We have $N_{n}=\sd M\ccten{\sd A}\bhd{A_{n}}\cong\sd M\ccten{\sd A}\sd{A_{n}}\ccten{\sd{A_{n}}}\bhd{A_{n}}\cong\sd{M_{n}}\ccten{\sd{A_{n}}}\bhd{A_{n}}$
and by proposition \ref{prop:hdcom} $N_{n}\cong\bhd{M_{n}}$. Since
$\bhd{\left(-\right)}$ preserves finite direct sums, $\bhd{M_{n}}$
is a finitely cogenerated $\bhd{A_{n}}$-$\cten$-comodule, which
gives us the 1st axiom of admissibility. The 2nd axiom is in the definition
of $N_{n}$.\end{proof}
\begin{cor}
Let $C$ be a CT-$\cten$-coalgebra and $\sd C$ its dual NF-$\cten$-algebra.
Then the categories of admissible $C$-$\cten$-comodules and coadmissible
$\sd C$-$\cten$-modules are antiequivalent. In particular, if $\sd C$
is Fréchet-Stein, then by \cite[3.5]{ST3} the category of admissible
$C$-$\cten$-comodules is abelian.\end{cor}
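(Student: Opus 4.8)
The plan is to obtain the antiequivalence as a restriction of the strong duality functor $\sd{(-)}$, which by the earlier discussion already gives an antiequivalence $\rctcomod C\sim\rnfmod{\sd C}$; recall that since CT- and NF-spaces are reflexive and $\sd{\sd C}\cong C$ as CT-$\cten$-coalgebras, the quasi-inverse of this functor is again the duality functor $\sd{(-)}$. First I would observe that the admissible $C$-$\cten$-comodules form a full subcategory $\mathcal{A}$ of $\rctcomod C$ which is closed under isomorphism (transport of an admissible structure along an isomorphism yields an admissible structure, and by the preceding corollary admissibility is independent of the chosen CT-structure), and likewise that the coadmissible $\sd C$-$\cten$-modules form a full, isomorphism-closed subcategory $\mathcal{M}$ of $\rnfmod{\sd C}$ \cite[1.2.9]{EM}. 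It then suffices to show that $\sd{(-)}$ maps $\mathcal{A}$ into $\mathcal{M}$ and $\mathcal{M}$ into $\mathcal{A}$.

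Both inclusions are precisely the content of the two theorems just proved. The first theorem shows that the dual of an admissible comodule, equipped with the dual of an admissible structure, is coadmissible, so $\sd{(-)}(\mathcal A)\subseteq\mathcal M$. For the reverse inclusion I would apply the second theorem with $A=\sd C$ and $A_n=\sd{C_n}$: since $\sd A=\sd{\sd C}\cong C$ by reflexivity, it asserts that for a coadmissible $\sd C$-$\cten$-module $M$ the dual $\sd M$ is an admissible $C$-$\cten$-comodule (with admissible structure $\{\bhd{A_n}\}$), giving $\sd{(-)}(\mathcal M)\subseteq\mathcal A$. A fully faithful functor that is an antiequivalence and carries two full, isomorphism-closed subcategories into one another restricts to an antiequivalence between them: fullness and faithfulness are inherited, while essential surjectivity of $\sd{(-)}\colon\mathcal A\to\mathcal M$ holds because for $M\in\mathcal M$ we have $\sd M\in\mathcal A$ and $\sd{\sd M}\cong M$. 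This yields the asserted antiequivalence $\mathcal A\sim\mathcal M$.

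For the final assertion, suppose $\sd C$ is Fréchet-Stein. By the remark preceding the definition of admissibility, over a Fréchet-Stein algebra our notion of coadmissible module coincides with that of Schneider and Teitelbaum, so by \cite[3.5]{ST3} the category $\mathcal M$ of coadmissible $\sd C$-$\cten$-modules is abelian. Since the opposite of an abelian category is again abelian and $\mathcal A$ is antiequivalent to $\mathcal M$ (hence equivalent to $\mathcal M^{\mathrm{op}}$), the category $\mathcal A$ of admissible $C$-$\cten$-comodules is abelian as well.

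The step I expect to require the most care is resisting the temptation to verify that the two theorems are mutually inverse \emph{on the nose} rather than merely producing some admissible (resp.\ coadmissible) structure: matching the structure $\{\bhd{A_n}\}$ furnished by the second theorem for $\sd M$ with the original one would require the biduality isomorphisms $\bhd{(\sd{C_n})}\cong C_n$ and $\bhd{(\sd{M_n})}\cong M_n$, which are delicate because the Banach spaces $C_n$ are only pseudo-reflexive. This is exactly where the isomorphism-closedness of $\mathcal A$ and $\mathcal M$ (together with the structure-independence of admissibility) lets me sidestep the bookkeeping: I only need $\sd M$ to be admissible as an \emph{object}, not to realize a prescribed structure, so the purely formal equivalence argument above is sufficient.
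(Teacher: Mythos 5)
Your proof is correct and follows essentially the route the paper intends: the corollary is stated there without proof, as an immediate consequence of the two preceding duality theorems together with the antiequivalence $\rctcomod C\sim\rnfmod{\sd C}$ and reflexivity of CT- and NF-spaces. Your use of fullness and isomorphism-closedness of the two subcategories (rather than verifying that the dual structures match on the nose) is exactly the right way to make that implicit argument rigorous.
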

\begin{rem}
One can define admissible $\cten$-comodules in analogy with the definition
\cite[1.2.8]{EM}, i.e. to require that
\begin{enumerate}
\item $V_{n}$ is a finitely cogenerated Banach right $\cten$-comodule
over $C_{n}$;
\item we have an isomorphism of Banach $C_{n}$-$\cten$-comodules $V_{n+1}\ccten{C_{n+1}}C_{n}\simeq V_{n}$
($C_{n}$ is a left and right $C_{n+1}-$$\cten$-comodule, so we
can take completed cotensor product).
\end{enumerate}
With this definition admissibility will still be preserved by induction
and restriction functors and will not depend on CT-structure. However
for duality ``admissible $\cten$-comodules'' - ``coadmissible
$\cten$-modules'' one needs the subspaces $V\ccten CC_{n}\subset V\cten C_{n}$
to be (ultra)bornological. For general admissible $\cten$-comodule
in the sense of this remark, it is currently unknown to the author
if it is always true.
\end{rem}

\section*{Appendix}

Here we prove two technical results, stated in the proposition \ref{prop:topten}.
\begin{lem}
\label{lem:LBsplitting}Let V be an LS-space and $W$ be a Banach
space. Then for any bounded set $B$ in $V\cten W$ there exists a
bounded set $B_{V}\subset V$ and a bounded set $B_{W}\subset W$
such that $B\subset\compl[\left(V\cten W\right)]{\left(B_{V}\otimes B_{W}\right)}$.\end{lem}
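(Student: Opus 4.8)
The plan is to reduce the assertion to a single Banach step of the inductive system and there to invoke the cross-norm description of the completed tensor product. Write $V\cong\ilim V_{n}$ for a compact inductive system of Banach spaces with injective transition maps $\phi_{nm}$, let $\iota_{n}:V_{n}\to V$ be the canonical maps (each $\iota_{n}$ is compact, since $\phi_{n,n+1}$ is), and realize $V\cten W$ as the locally convex inductive limit $\ilim\left(V_{n}\cten W\right)$ (Proposition \ref{prop:topten}.\ref{enu:closedLB}), with canonical maps $j_{n}:V_{n}\cten W\to V\cten W$. The point to keep in mind is that this is only an LB-space and \emph{not} of compact type: the linking maps $\phi_{nm}\cten\id W$ fail to be compact as soon as $W$ is infinite-dimensional. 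The heart of the proof is therefore to show that $V\cten W$ is nevertheless a \emph{regular} LB-space, i.e. that every bounded subset is contained in, and bounded in, a single step $V_{m}\cten W$.

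Granting regularity, the lemma follows at once. Given a bounded $B\subset V\cten W$, choose $m$ with $B\subseteq j_{m}(V_{m}\cten W)$ and $j_{m}^{-1}(B)$ bounded in the Banach space $V_{m}\cten W$; then $j_{m}^{-1}(B)\subseteq\lambda\,(V_{m}\cten W)^{0}$ for some $\lambda\in K^{*}$. By the definition of $\norm_{V\cten W}$ the unit ball $(V_{m}\cten W)^{0}$ is the closure of the absolutely convex hull of $V_{m}^{0}\otimes W^{0}$, so $j_{m}^{-1}(B)$ lies in the closed absolutely convex hull of $(\lambda V_{m}^{0})\otimes W^{0}$. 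Setting $B_{V}:=\iota_{m}(\lambda V_{m}^{0})$, a bounded (in fact compactoid) subset of $V$, and $B_{W}:=W^{0}$, continuity and linearity of $j_{m}$ carry this hull into the closed absolutely convex hull of $B_{V}\otimes B_{W}$ computed in $V\cten W$; since $B=j_{m}(j_{m}^{-1}(B))$ and $j_{m}(a\otimes w)=\iota_{m}(a)\otimes w$, we obtain $B\subseteq\compl[\left(V\cten W\right)]{\left(B_{V}\otimes B_{W}\right)}$, which is the claim.

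The main obstacle is thus the regularity of $V\cten W$, and it is genuinely the delicate point, because the linking maps are not compact and general LB-spaces can fail to be regular. Here I would exploit that $K$ is discretely valued, so each $V_{n}$ is free and the compact transition maps can be diagonalized: one reduces to a Köthe-type model in which the $V_{n}$ share an orthogonal ``basis'' $\{e_{i}\}$ with weights $s_{i,n}=\norm[e_{i}]_{V_{n}}$, decreasing in $n$ and satisfying $s_{i,n+1}/s_{i,n}\to0$ as $i\to\infty$ (this last property encoding the compactness of $\phi_{n,n+1}$). In this model $V_{n}\cten W$ is the weighted space of $W$-valued families $\sum_{i}e_{i}\otimes w_{i}$ with norm $\sup_{i}s_{i,n}\norm[w_{i}]_{W}$, and regularity becomes the concrete assertion that a bounded set cannot spread its mass over infinitely many steps.

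I would establish this by the standard diagonal argument: if a bounded $B$ were absorbed boundedly by no single step, one could pick $x^{(k)}\in B$ whose $V_{m}$-norm grows with $k$, and then, using $s_{i,n+1}/s_{i,n}\to0$, choose radii $\varepsilon_{n}\downarrow0$ so that the $0$-neighbourhood $\overline{\mathrm{aco}}\bigl(\bigcup_{n}\varepsilon_{n}(V_{n}\cten W)^{0}\bigr)$ of $V\cten W$ fails to absorb the $x^{(k)}$, contradicting boundedness of $B$. (The same estimate can be phrased through the compactoid calculus of \cite{PGSch}, which is perhaps cleaner and avoids the explicit coordinates.) With regularity in hand, the two reductions of the second paragraph complete the proof.
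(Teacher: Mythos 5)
Your reduction is sound as far as it goes: \emph{if} the inductive sequence $\left(V_{n}\cten W\right)$ is regular, then your Banach-level splitting works — over a discretely valued field with solid norms the unit ball of $V_{m}\cten W$ is indeed the closure of the $K^{0}$-hull of $V_{m}^{0}\otimes W^{0}$, and pushing forward along $j_{m}$ gives the required $B_{V}$ and $B_{W}$. This is also genuinely different from the paper's route: the paper never passes through regularity. Instead it fixes a $t$-orthogonal base $\left\{ w_{k}\right\}$ of the Banach factor $W$ (which always exists since $K$ is discretely valued), writes any $z\in V\cten W\cong\ilim\left(V_{n}\cten W\right)$ as $z=\sum_{k}v_{k}^{z}\ten w_{k}$ with coefficients $v_{k}^{z}\in V$, and then follows \cite[10.4.6]{PGSch}: the suitably scaled coefficient maps $z\mapsto v_{k}^{z}$ form an equicontinuous family $V\cten W\to V$, so they carry the bounded set $B$ into a common bounded subset of $V$. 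All of the basis structure used there sits on the $W$ side, where it is always available.

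The genuine gap is exactly at what you call the heart of your argument, the proof of regularity. You propose to find a single orthogonal system $\left\{ e_{i}\right\}$ common to all the $V_{n}$, with weights $s_{i,n}$, i.e.\ to replace $V$ by a K\"othe-type sequence space. Such a common orthogonal system would be a Schauder basis of the LS-space $V$, equivalently of the nuclear Fr\'echet space $\sd V$, and this is impossible in general: by \'Sliwa's negative solution of the nonarchimedean basis problem there exist nonarchimedean nuclear Fr\'echet spaces (over discretely valued fields) with no Schauder basis, so no simultaneous diagonalization of the compact linking maps exists for a general $V$. The subsequent diagonal argument is moreover only a sketch, and it cannot lean on any standard criterion, since — as you note yourself — the linking maps of $\left(V_{n}\cten W\right)$ are not compact, and non-regular LB-spaces do exist. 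A fix that stays closest to your plan: $V\cten W$ is complete (it is a completed tensor product) and it is an LB-space by Proposition \ref{prop:topten}.\ref{enu:closedLB}; now apply Grothendieck's factorization theorem (see \cite[Chapter 24]{MEI}, or the $p$-adic inductive limit theory in \cite{GPKS}) to the Banach space spanned by the closed absolutely convex hull of a bounded set, normed by its gauge: any continuous map from a Banach space into a Hausdorff LB-space factors continuously through some step, which is precisely regularity for a quasi-complete LB-space. With that substitution your proof closes; as written, the key step fails.
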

\begin{proof}
Let $0<t<1$. Similar to the proof of \cite[10.4.6]{PGSch}, for any
element $z\in V\cten W$ one has a presentation $z={\displaystyle \sum_{k=0}^{\infty}}v_{k}^{z}\ten w_{k}$
with $\left\{ w_{k}\right\} \in W\backslash\left\{ 0\right\} $ being
a $t$-orthogonal base in $W$ (follows from \ref{prop:topten}.\ref{enu:closedLB}
and \cite[4.30.ii]{vanr}). The rest of the proof goes the same way
as in \cite[10.4.6]{PGSch}.\end{proof}
\begin{rem*}
The result is true for more general $V$. However the proof is more
complicated, since instead of sums (or sequences) one has to work
with nets. The above version is sufficient for our purposes.
\end{rem*}
\smallskip{}

\begin{rem*}
One can make $B_{V}$ and $B_{W}$ into $K^{0}$-submodules by considering
their convex hulls.
\end{rem*}
We will use the notation $B_{V}\cten B_{W}:=\compl[\left(V\cten W\right)]{\left(B_{V}\otimes B_{W}\right)}$.

\subsubsection*{Proof of proposition \ref{prop:topten}.\ref{enu:-and-CTBten}.}

We first note that, since $V$ is bornological and reflexive \cite[1.1]{ST4}
and $W$ is Banach space, by \cite[18.8]{nfa} we have a topological
isomorphism 
\[
\sd V\cten\sd W\cong L_{b}\left(V,\sd W\right).
\]
Since $V$ is reflexive, by \cite[15.5]{nfa} it is barrelled and
by \cite[1.1.35]{EM} (via putting $W=K$ there) we have a continuous
bijection 
\[
\phi:\sd{V\cten W}\simeq L_{b}\left(V,\sd W\right)
\]
and, similar to the proof of \cite[20.13]{nfa}, a correspondence
of open lattices 
\[
\phi:L\left(l_{V}\cten l_{W},K^{0}\right)\mapsto L\left(l_{V},L\left(l_{W},K^{0}\right)\right),
\]
where $l_{V}$ and $l_{W}$ are bounded $K^{0}$-submodules in $V$
and $W$. The open lattices $L\left(l_{V},L\left(l_{W},K^{0}\right)\right)$
generate the strong topology on $L_{b}\left(V,\sd W\right)$ by definition.
Our splitting lemma \ref{lem:LBsplitting} imply that the collection
of bounded sets of the form $\left\{ l_{V}\cten l_{W}\right\} $ and
of all bounded sets in $V\cten W$ satisfy the assumptions from \cite[6.5]{nfa}
for families $\mathcal{B}$ and $\tilde{\mathcal{B}}$ correspondingly.
Thus \cite[6.5, 6.4]{nfa} imply that open lattices $L\left(l_{V}\cten l_{W},K^{0}\right)$
generate the strong topology on $\sd{V\cten W}$ and thus $\phi$
is a topological isomorphism.

The second identity is \cite[20.13]{nfa}.\qed

\smallskip{}

Recall that an LCTVS $V$ is called \emph{bornological} if its topology $\tau_V$ is the
final topology for a system (not necessarily a sequence) of maps $\{E_i\to E\}_{i\in I}$
from normed vector spaces $E_i$. It is called \emph{ultrabornological} if
the spaces $E_i$ are Banach spaces.
Clearly a complete bornological space is ultrabornological and locally convex inductive 
limits of Banach spaces are ultrabornological. Our reference
for (ultra)bornological spaces is \cite[Chapter 24]{MEI}, with nonarchimedean
analogs of most statements there being easy exercises (also see \cite{nfa}).
For any unfamiliar term in the following proof one should follow the
references in \cite[Chapter 24]{MEI}.

\subsubsection*{Proof of proposition \ref{prop:topten}.\ref{enu:closedLB}.}

Let $Z$ be a Banach space and $f:V\cten W\to Z$ be a locally bounded
map and $\phi:V\times W\to V\cten W$ be the canonical bilinear map. 

For every $v\in V$ consider the map $f\circ\phi\left(v,-\right):W\to Z$.
For every bounded subset $B\subset W$ the set $v\otimes W$ is bounded
in $V\cten W$ and thus $f\circ\phi\left(v,-\right)$ is a bounded
map of Banach spaces, which imply it is continuous for every $v\in V$.
Similarly, for every $w\in W$, $f\circ\phi\left(-,w\right):V\to Z$
is a locally bounded map, which imply it is continuous since $V$,
as an LS-space, is ultrabornological \cite[24.10.4]{MEI}. Thus $f\circ\phi$
is a separately continuous map. By the universal property of inductive
tensor product, $f$ is continuous.

Thus we get that each locally bounded map $f$ is continuous, which
imply that $V\cten W$ is bornological \cite[24.10.4]{MEI}. Since
it is also complete, it is ultrabornological \cite[24.15.b]{MEI}.
$\ilim\left(V_{n}\cten W\right)$ is a webbed space as a locally convex
inductive limit of webbed spaces and thus by Open Mapping theorem
\cite[24.30]{MEI} the canonical continuous bijection $\ilim\left(V_{n}\cten W\right)\simeq V\cten W$
is open, which imply it is a topological isomorphism.\qed

\end{document}